\newcounter{fenumerate}
\newenvironment{fenumerate}
{%ŠJŽn–½—ß
  \begin{list}{($f$.\arabic{fenumerate})}{\usecounter{fenumerate}}
}
{%I—¹–½—ß
  \end{list}
}
\title{A Simplified Characterisation of Provably Computable Functions of the
System $\mathbf{ID}_1$ of %non-iterated 
Inductive Definitions \\ (Technical Report)}
\author{Naohi Eguchi\inst{1}\thanks{The first author is supported by the research project \emph{Philosophical Frontiers in
Reverse Mathematics} sponsored by the John Templeton Foundation.}
 and Andreas Weiermann\inst{2}}
\institute{%
Mathematical Institute,
Tohoku University,
Japan \\
\email{eguchi@math.tohoku.ac.jp}
\and 
Department of Mathematics, 
Ghent University,
Belgium \\
\email{weiermann@cage.ugent.be}
}
\date{May, 2012}
\begin{document}

\maketitle

\begin{abstract}
We present a simplified and streamlined 
 characterisation of provably total computable functions of the theory $\mathbf{ID}_1$ of
 non-iterated inductive definitions.
The idea of the simplification is to employ the method of
 operator-controlled derivations that was originally introduced by
 Wilfried Buchholz and afterwards applied by the second author to a
 characterisation of provably total computable functions of Peano
 arithmetic $\mathrm{PA}$.
\\

\noindent
{\bf Keywords:} Provably Computable Functions; System of Inductive
 Definitions; Ordinal Notation Systems; Operator Controlled Derivations.
\end{abstract}
%\noindent
%{\bf Keywords:}

\section{Introduction}

As stated by G\"odel's second incompleteness theorem, any reasonable
consistent formal system has an unprovable $\Pi^0_2$-sentence 
that is true in the standard model of arithmetic.
This means that the total (computable) functions whose totality is
provable in a consistent system, which are known as {\em provably computable
functions} or {\em provably total functions}, form a proper subclass of
total computable
functions. 
It is natural to ask how we can describe the provably total functions of
a given system.
Not surprisingly provably (total) computable functions are closely related to provable
well-ordering, i.e., {\em ordinal analysis}.
Up to date ordinal analysis for quite strong systems has been
accomplished by M. Rathjen \cite{Rath91,Rath94} or T. Arai
\cite{arai03,arai04}.
On the other hand several successful applications of techniques from
ordinal analysis to characterisations of provably computable functions have
been provided by B. Blankertz and A. Weiermann \cite{BW96}, W. Buchholz \cite{buch01},
Buchholz, E. A. Cichon and Weiermann \cite{BCW94},
M. Michelbrink \cite{Mich06}, or G. Takeuti \cite{takeuti87}.
Surveys on characterisations of provably computable functions of fragments of
Peano arithmetic $\mathrm{PA}$
contain the monograph \cite{FW98} by M. Fairtlough and S. S. Wainer.

Modern ordinal analysis is based on the method of {\em local
predicativity}, that was first introduced by W. Pohlers,
c.f. \cite{Poh89,Poh98}.
Successful applications of local predicativity to provably computable functions
contain works by Blankertz and Weiermann
\cite{weier96} and by Weiermann \cite{weier99}.
However, to the authors' knowledge, the most successful way in ordinal
analysis is based on the method of
{\em operator-controlled derivations}, an essential
simplification of local predicativity, that was introduced by Buchholz
\cite{Buch92}.
In \cite{weier06} the second author successfully applied the method of
operator-controlled derivations to a streamlined characterisation of
provably computable functions of 
$\mathrm{PA}$.
(See also  \cite[Section 2.1.5]{Poh98}.)
Technically this work aims to lift up the characterisation in
\cite{weier06} to an impredicative system $\mathbf{ID}_1$ of
non-iterated inductive definitions.
We introduce an ordinal notation system $\OTO$ and define a computable function 
$f^\alpha$ for a starting number-theoretic function 
$f: \mathbb{N} \rightarrow \mathbb{N}$
by transfinite recursion on $\alpha \in \OTO$.
The ordinal notation system $\OTO$ comes from a draft
\cite{weier_draft} of the second author and the transfinite
definition of $f^\alpha$ comes from \cite{weier06}.
We show that a function is provably computable in $\mathbf{ID}_1$ if and
only if it is a Kalmar elementary function in
$\{ \suc^\alpha \mid \alpha \in \OTO \text{ and } \alpha < \Omega \}$,
where $\suc$ denotes the successor function 
$m \mapsto m+1$ and $\Omega$ denotes the least non-recursive ordinal.
(Corollary \ref{c:main})

\section{Preliminaries}

In order to make our contribution precise, in this preliminary section
we collect the central notions.
We write $\mathcal{L}_{\mathrm{PA}}$ to denote the standard language of first order
theories of arithmetic.
In particular we suppose that the constant $0$ and the successor
function symbol $S$ are included in $\mathcal{L}_{\mathrm{PA}}$.
For each natural $m$ we use the notation $\ul{m}$ to denote the
corresponding numeral built from $0$ and $S$.
Let a set variable $X$ denote a subset of $\mathbb{N}$. 
We write $X(t)$ instead of $t \in X$ and $\mathcal{L}_{\mathrm{PA}}(X)$ for 
$\mathcal{L}_{\mathrm{PA}} \cup \{ X \}$.
Let $\mathsf{FV}_1 (A)$ denote the set of free number variables appearing
in a formula $A$ and $\FV_2 (A)$ the set of free set variables in $A$.
And then let $\FV (A) := \FV_1 (A) \cup \FV_2 (A)$.
For a fresh set variable $X$ we call an an 
$\mathcal{L}_{\mathrm{PA}} (X)$-formula $\mathcal{A} (x)$ a
{\em positive operator form} if
%\begin{enumerate}
%\item 
$\FV_1 (\mathcal A (x)) \subseteq \{ x \}$,
%\item 
$\FV_2 (\ofA (x)) = \{ X \}$, and
%\item 
$X$ occurs only positively in $\mathcal A$.
%\end{enumerate}

Let $\mathsf{FV}_1 (\mathcal{A} (x)) = \{ x \}$.
For a formula $F(x)$ such that $x \in \mathsf{FV}_1 (F(x))$ we write 
$\mathcal A (F, t)$ to denote the result of replacing in 
$\mathcal{A} (t)$ every subformula $X(s)$ by $F(s)$.
The language $\LID$ of the {\em theory $\mathbf{ID}_1$ of
non-iterated inductive definitions} is defined by
$\LID := \mathcal L_{\mathrm{PA}} \cup \{ P_{\mathcal A} \mid \mathcal A 
                      \text{ is a positive operator form} \}$ 
where for each positive operator form $\mathcal{A}$, 
$P_{\mathcal A}$ denotes a new unary predicate symbol.
We write $\mathcal{T} (\LID, \mathcal{V})$ to denote the set of 
$\LID$-terms and $\mathcal{T} (\LID)$ to denote the set of closed 
$\LID$-terms.
The axioms of $\mathbf{ID}_1$ consist of the axioms of Peano arithmetic 
$\mathrm{PA}$ in
the language $\LID$ and the following new axiom schemata $(\mathsf{ID}_1)$ and
$(\mathsf{ID}_2)$:

\begin{description}
\item[$(\mathsf{ID}1)$] 
$\forall x (\mathcal A (P_{\mathcal A}, x) \rightarrow     
            P_{\mathcal A} (x))$.
\item[$(\mathsf{ID}2)$] (The universal closure of) 
$\forall x (\mathcal A (F, x) \rightarrow F(x)
         ) \rightarrow 
 \forall x (P_{\mathcal A} (x) \rightarrow F(x)
         )$,
where $F$ is an $\LID$-formula.
\end{description}

For each $n \in \mathbb{N}$ we write $\mathrm{I \Sigma}_n$ to denote the
fragment of Peano arithmetic $\mathrm{PA}$
with induction restricted to $\Sigma^0_n$-formulas.
Let $k$ be a natural number and $f: \mathbb{N}^k \rightarrow \mathbb{N}$ a
number-theoretic function and $T$ be a theory of arithmetic containing 
$\mathrm{I \Sigma}_1$.
Then we say $f$ is {\em provably computable in $T$} or 
{\em provably total in $T$} if there exists a
$\Sigma^0_1$-formula $A_f (x_1, \dots, x_k, y)$ such that the following hold:
\begin{enumerate}
\item $\FV (A_f) = \FV_1 (A_f) = \{ x_1, \dots, x_k, y \}$.
\item For all $\vec{m}, n \in \mathbb{N}$, 
      $f(\vec{m}) = n$ holds if and only if 
      $A_f (\ul{\vec{m}}, \ul{n})$ is true in the standard model
      $\mathbb{N}$ of $\mathrm{PA}$.
\item $\forall \vec{x} \exists ! y A_f (\vec{x}, y)$ is a theorem in
      $T$.
\end{enumerate}

It is well known that the provably computable functions of the theory
$\mathrm{I \Sigma}_1$ coincide with the primitive recursive functions.
It is also known that the provably computable functions of the theory
$\mathrm{I \Sigma}_2$ coincide with the P\'eter's multiply recursive functions.

\section{A non-recursive ordinal notation system $\OT$}

In this section we introduce a {\em non-recursive} ordinal notation system 
$\OT = \langle \OT, < \rangle$.
This new ordinal notation system is employed in the next section.
For an element $\alpha \in \OT$ let
$\OT \seg \alpha$ denote the set
$\{ \beta \in \OT \mid \beta < \alpha \}$. 

\begin{definition}
We define three sets 
$\SC \subseteq \AI \subseteq \OT$ of ordinal terms and a set $\opF$ of unary
 function symbols simultaneously. 
Let $0$, $\varphi$, $\Omega$, $\Suc$, $\EN$ and $+$ be distinct symbols.
\begin{enumerate}
\item $0 \in \OT$ and $\Omega \in \SC$.
\item $\{ \Suc, \EN \} \subseteq \opF$.
\item If $\alpha \in \OT \seg \Omega$, then 
      $\Suc (\alpha) \in \OT$ and 
      $\EN (\alpha) \in \AI$.
\item If $\{ \alpha_1, \dots, \alpha_l \} \subseteq \AI$ and
      $\alpha_1 \geq \cdots \geq \alpha_l$, then 
      $\alpha_1 + \cdots + \alpha_l \in \OT$.
\item If $\{ \alpha, \beta \} \subseteq \OT \seg \Omega$, then 
      $\varphi \alpha \beta \in \AI$.
\item If $\alpha \in \OT$ and $\xi \in \OT \seg \Omega$, then
      $\Omega^\alpha \cdot \xi \in \AI$.
\item If $F \in \opF$, $\alpha \in \OT$ and $\xi \in \OT \seg \Omega$,
      then $F^\alpha (\xi) \in \SC$.
\item If $F \in \opF$ and $\alpha \in \OT$, then
      $F^\alpha \in \opF$.
\end{enumerate}
\end{definition}

By definition $F(\xi) \in \OT$ holds if $F^\alpha (\xi) \in \OT$ for
some $\alpha \in \OT$.
We write $\omega^\alpha$ to denote $\varphi 0 \alpha$ and 
$m$ to denote 
$\omega^0 \cdot m = 
 \underbrace{\omega^0 + \cdots + \omega^0}_{m \text{ many}}$.

Let $\Ord$ denote the class of ordinals and $\Lim$ the class of limit ones.
We define a semantic $[\cdot]$ for $\OT$, i.e.,
$[\cdot]: \OT \rightarrow \Ord$.
The well ordering $<$ on $\OT$ is defined by
$\alpha < \beta \Leftrightarrow [\alpha] < [\beta]$.
Let $\Omega_1$ denote the
least non-recursive ordinal $\oCK$. 
For an ordinal $\alpha$ we write
$\alpha =_{NF} \Omega_1^{\alpha_1} \cdot \beta_1 + \cdots +
               \Omega_1^{\alpha_l} \cdot \beta_l$ 
if 
$\alpha > \alpha_1 > \dots > \alpha_l$,
$\{ \beta_1, \dots, \beta_l \} \subseteq \Omega_1$, and 
$\alpha = \Omega_1^{\alpha_1} \cdot \beta_1 + \cdots +
          \Omega_1^{\alpha_l} \cdot \beta_l$. 
Let $\varepsilon_\alpha$ denote the $\alpha$th epsilon number.
One can observe that for each ordinal
$\alpha < \varepsilon_{\Omega_1 + 1}$ there uniquely exists a set
$\{ \alpha_1, \dots, \alpha_l, \beta_1, \dots, \beta_l \}$ of  ordinals such
that
$\alpha =_{NF} \Omega_1^{\alpha_1} \cdot \beta_1 + \cdots +
               \Omega_1^{\alpha_l} \cdot \beta_l$. 
For a set $K \subseteq \Ord$ and for an ordinal $\alpha$ we will write 
$K < \alpha$ to abbreviate $(\forall \xi \in K) \xi < \alpha$, and
dually $\alpha \leq K$ to abbreviate
$(\exists \xi \in K) \alpha \leq \xi$.

\begin{definition}[Collapsing operators]
\begin{enumerate}
\item Let $\alpha$ be an ordinal such that
$\alpha =_{NF} \Omega_1^{\alpha_1} \cdot \beta_1 + \cdots +
               \Omega_1^{\alpha_l} \cdot \beta_l 
 < \varepsilon_{\Omega_1 +1}$.
The set $K_{\Omega} \alpha$ of \emph{coefficients} of 
$\alpha$ is defined by
\[
 K_{\Omega} \alpha = \{ \beta_1, \dots, \beta_l \} \cup
 K_{\Omega} \alpha_1 \cup \cdots \cup K_{\Omega} \alpha_l. 
\]
\item Let $F: \Ord \rightarrow \Ord$ be an ordinal function.
      Then a function $F^\alpha: \Ord \rightarrow \Ord$ is
      defined by transfinite recursion on $\alpha \in \Ord$ by %\\
\[
      \left\{
       \begin{array}{rcl}
       F^0 (\xi) &=& F (\xi), \\
       F^\alpha (\xi) &=&  \min
       \{ \gamma \in \Ord \mid \omega^\gamma = \gamma, \
          K_{\Omega} \alpha \cup \{ \xi \} < \gamma \text{ and } \\
       && \hspace{2.5cm}
          (\forall \eta < \gamma) (\forall \beta < \alpha) 
          (K_{\Omega} \beta < \gamma \Rightarrow 
           F^\beta (\eta) < \gamma
          )
       \}.
       \end{array}\right.
\]
\end{enumerate}
\end{definition}

\begin{corollary}
Let $F: \Ord \rightarrow \Ord$ be an ordinal function.
Then
$F^\beta (\eta) < F^\alpha (\xi)$ holds if one of the following holds.
\begin{enumerate}
\item $\beta < \alpha$ and 
      $K_{\Omega} \beta \cup \{ \eta \} < F^\alpha (\xi)$. 
\item $\alpha \leq \beta$ and 
      $F^\beta (\eta) \leq K_{\Omega} \alpha$.
\end{enumerate}
\end{corollary}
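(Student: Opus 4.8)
The plan is to prove both implications by directly unwinding the definition of $F^{\alpha}(\xi)$, and the first thing I would record is that in each of the two cases the stated hypotheses already force $\alpha \neq 0$: in case~1 this is immediate from $\beta < \alpha$, while in case~2 the inequality $F^{\beta}(\eta) \leq K_{\Omega}\alpha$ presupposes $K_{\Omega}\alpha \neq \emptyset$, and $K_{\Omega}0 = \emptyset$. Consequently, in both cases $F^{\alpha}(\xi)$ is the value produced by the minimum clause of the definition; since the minimum of a non-empty class of ordinals is a member of that class, $\gamma := F^{\alpha}(\xi)$ itself satisfies the three defining conditions
\[
 \omega^{\gamma} = \gamma, \qquad K_{\Omega}\alpha \cup \{ \xi \} < \gamma, \qquad
 (\forall \eta' < \gamma)(\forall \beta' < \alpha)\bigl( K_{\Omega}\beta' < \gamma \Rightarrow F^{\beta'}(\eta') < \gamma \bigr).
\]
Everything then follows by instantiating these appropriately.

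For case~1 I would argue as follows. Assume $\beta < \alpha$ and $K_{\Omega}\beta \cup \{ \eta \} < \gamma$. Reading off the latter hypothesis gives $\eta < \gamma$ as well as $K_{\Omega}\beta < \gamma$, so the closure condition applied with $\eta' := \eta$ and $\beta' := \beta$ yields $F^{\beta}(\eta) < \gamma = F^{\alpha}(\xi)$. This also covers the boundary case $\beta = 0$, where $F^{\beta}$ is literally $F$, because then $K_{\Omega}\beta = \emptyset < \gamma$ and the closure condition still applies.

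For case~2 I would assume $\alpha \leq \beta$ together with $F^{\beta}(\eta) \leq K_{\Omega}\alpha$, and pick $\zeta \in K_{\Omega}\alpha$ with $F^{\beta}(\eta) \leq \zeta$. The second defining condition gives $K_{\Omega}\alpha < \gamma$, hence $\zeta < \gamma$, so $F^{\beta}(\eta) \leq \zeta < \gamma = F^{\alpha}(\xi)$. (The hypothesis $\alpha \leq \beta$ is not actually needed for this conclusion; I take it to be recorded for later use.)

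The argument is little more than careful bookkeeping, so I do not expect a serious obstacle; the only points that need a moment's attention are verifying that the hypotheses rule out $\alpha = 0$ and that the case $\beta = 0$ is absorbed by the closure condition. The one genuinely non-trivial ingredient is the assertion that the class occurring in the definition of $F^{\alpha}(\xi)$ is non-empty, so that the minimum---and therefore $F^{\alpha}(\xi)$---is well-defined and belongs to that class; I would simply invoke this, since it must already be part of the set-up in which $F^{\alpha}$ is introduced.
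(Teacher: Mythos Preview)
Your argument is correct and is exactly the direct unwinding of the definition that the paper has in mind; the paper states this result as a corollary without proof, and what you have written is the natural justification. Your side observations---that the hypotheses force $\alpha \neq 0$ so that the minimum clause applies, and that the assumption $\alpha \leq \beta$ in case~2 is not actually used in deriving the inequality---are both accurate.
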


\begin{proposition}
Suppose that $\alpha < \varepsilon_{\Omega_1 +1}$, a function
$F: \Ord \rightarrow \Ord$ has a $\Sigma_1$-definition in the
 $\Omega_1$-th stage $L_{\Omega_1}$ of the constructible hierarchy 
$(L_\alpha)_{\alpha \in \Ord}$ and that
$F(\xi) < \Omega_1$ for all $\xi < \Omega_1$.
Then $F^\alpha$ also has a $\Sigma_1$-definition in $L_{\Omega_1}$ and 
$F^\alpha (\xi) < \Omega_1$ holds for all $\xi < \Omega_1$.
\end{proposition}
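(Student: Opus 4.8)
The plan is to argue by transfinite induction on $\alpha<\varepsilon_{\Omega_1+1}$, leaning throughout on the admissibility of $\Omega_1$: over $L_{\Omega_1}$ one has $\Sigma_1$-collection, hence $\Sigma_1$-replacement and $\Delta_1$-separation, and the $\varepsilon$-numbers below $\Omega_1$ form a $\Delta_1$-definable cofinal class which is closed under $\omega$-fold iteration of any total $\Sigma_1$-definable function $G:\Omega_1\to\Omega_1$ (iterate $G$ together with $\zeta\mapsto\omega^\zeta$ and apply $\Sigma_1$-replacement to keep the supremum below $\Omega_1$). I begin with a coding remark. Since $\alpha<\varepsilon_{\Omega_1+1}$, the set $K_\Omega\alpha$ is a finite subset of $\Omega_1$, and $\alpha$ has a canonical finite code $\widehat{\alpha}$, namely its iterated Cantor normal form to base $\Omega_1$, whose leaves are exactly the elements of $K_\Omega\alpha$. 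Since $K_\Omega\alpha\subseteq\Omega_1$, this code is a hereditarily finite object over a finite set of ordinals $<\Omega_1$, so $\widehat{\alpha}\in L_{\Omega_1}$; moreover the predicates ``$t$ is such a code'', ``$[t_1]<[t_2]$'' and the operation $t\mapsto K_\Omega[t]$ are $\Delta_1$ over $L_{\Omega_1}$. Crucially, for a fixed $\zeta<\Omega_1$ the collection $\{\widehat{\beta} : \beta<\alpha,\ K_\Omega\beta<\zeta\}$ consists of codes all of whose leaves are $<\zeta$, hence is a $\Delta_1$-definable subclass of a genuine $L_{\Omega_1}$-set and so is itself a set in $L_{\Omega_1}$ --- even though, viewed as a set of ordinal indices, its order type may reach $\Omega_1$. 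This is exactly what renders the unbounded quantifier ``$\forall\beta<\alpha$'' occurring in the definition of $F^\alpha$ admissible over $L_{\Omega_1}$ even when $\alpha\ge\Omega_1$.

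The base case $\alpha=0$ is the hypothesis on $F$. For $\alpha>0$ I assume the claim for all $\beta<\alpha$ in the uniform form --- obtained by fixing once and for all a single $\Sigma_1$-predicate $\Phi(t,\eta,\zeta)$ through the $\Sigma_1$-recursion theorem along ``$[t_1]<[t_2]$'', arranged so that the value at a code is computed from only a \emph{set} of earlier values (those whose $K_\Omega$ lies below the $\varepsilon$-number currently being tested) --- that for every $\beta<\alpha$ and $\eta<\Omega_1$ there is a unique $\zeta<\Omega_1$ with $\Phi(\widehat{\beta},\eta,\zeta)$, and $\zeta=F^\beta(\eta)$. Put $\mu:=\sup(K_\Omega\alpha)+1<\Omega_1$. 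By the induction hypothesis and $\Sigma_1$-replacement the function $H(\zeta):=\sup\{F^\beta(\eta)+1 : \beta<\alpha,\ K_\Omega\beta<\zeta,\ \eta<\zeta\}$ is total on $\Omega_1$, takes values $<\Omega_1$, and is $\Sigma_1$ over $L_{\Omega_1}$ with parameters among $K_\Omega\alpha\cup\{\widehat{\alpha}\}$. An elementary computation shows that for every $\varepsilon$-number $\gamma>\mu$ the defining clause $(\forall\eta<\gamma)(\forall\beta<\alpha)(K_\Omega\beta<\gamma\Rightarrow F^\beta(\eta)<\gamma)$ of $F^\alpha$ is equivalent to the $\Delta_1$ condition $(\forall\zeta)(\mu\le\zeta<\gamma\Rightarrow H(\zeta)\le\gamma)$: from left to right because every pair $(\beta,\eta)$ contributing to $H(\zeta)$, $\zeta<\gamma$, satisfies $K_\Omega\beta<\gamma$ and $\eta<\gamma$; from right to left by applying the $H$-bound at $\zeta:=\max(\{\eta+1,\mu\}\cup\{\delta+1 : \delta\in K_\Omega\beta\})$, which is $<\gamma$ because $\gamma$ is a limit. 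Consequently $F^\alpha(\xi)$ is the least $\varepsilon$-number above $\sup(K_\Omega\alpha\cup\{\xi\})$ that is closed under $H$, and iterating the $\Sigma_1$ function $H$ produces such a number below $\Omega_1$. This shows at once that $F^\alpha(\xi)<\Omega_1$ (so the $\min$ in the definition is realised), and, by reading off the above description, that the graph of $F^\alpha\restriction\Omega_1$ is $\Sigma_1$ --- indeed $\Delta_1$ --- over $L_{\Omega_1}$, using that each restriction $H\restriction\zeta$ is a set. The handful of boundary cases, notably when $\mu$ itself is an $\varepsilon$-number, are routine. This closes the induction.

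I expect the real work to lie in the coding step and in the uniformity it forces. One must represent ordinals that may exceed $\Omega_1$ by objects of $L_{\Omega_1}$ faithfully enough that ``$\beta<\alpha$'' and ``$K_\Omega\beta$'' become internal, $\Delta_1$ notions, and then organise the $\Sigma_1$-recursion defining $\Phi$ so that --- although the well-founded relation ``$[t_1]<[t_2]$'' is not set-like on the proper class of all codes --- each value is computed from only a set's worth of predecessors. Once these two points are secured, the remainder is a formal combination of the induction hypothesis, $\Sigma_1$-replacement, and the cofinality of the $\varepsilon$-numbers below $\Omega_1$; totality of $F^\alpha$ on $\Omega_1$ and the explicit $\Sigma_1$-definition then drop out.
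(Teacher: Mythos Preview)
Your proof is correct and follows essentially the same strategy as the paper: transfinite induction on $\alpha$, an iteration below $\Omega_1$ whose steps are bounded via $\Sigma_1$-replacement (admissibility of $L_{\Omega_1}$), and taking the supremum to obtain the required $\varepsilon$-number. The paper packages the iteration as an explicit function $\psi:\omega\to\Omega_1$ with $\psi(m{+}1)$ the least $\varepsilon$-number dominating all $F^\beta(\eta)$ with $\eta<\psi(m)$ and $K_\Omega\beta<\psi(m)$, while you define a single-step operator $H$ and then iterate it; these are equivalent, and your more careful treatment of the coding and of the uniformity in $\beta$ fills in precisely what the paper dismisses with ``careful readers will observe.''
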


\begin{proof}
By induction on $\alpha < \varepsilon_{\Omega_1 +1}$.
If $\alpha =0$, then $F^0$ a $\Sigma_1$-function since so is $F$, and 
$F^0 (\xi) = F(\xi) < \Omega_1$ for all $\xi < \Omega_1$.
Suppose $\alpha > 0$.
From elementary facts in generalised recursion theory, c.f. Barwise's
 book \cite{Bar75}, careful readers will observe that $F^\alpha$ has a
 $\Sigma_1$-definition in $L_{\Omega_1}$ since 
``$\xi \in K_{\Omega} \alpha$'' can be expressed by a $\Delta_0$-formula.
To see that $F^\alpha (\xi)$ for all $\xi < \Omega_1$ let us define a function $\psi: \omega \rightarrow \varepsilon_{\Omega_1}$ by
\begin{eqnarray*}
\psi (0) &=& \min \{ \gamma < \varepsilon_{\Omega_1 +1} \mid
                     \omega^\gamma = \gamma \text{ and }
                     K_{\Omega} \alpha \cup \{ \xi \} < \gamma
                  \}, \\
\psi (m+1) &=& \min \{ \gamma < \varepsilon_{\Omega_1 +1} \mid
                     \omega^\gamma = \gamma, \
                     K_{\Omega} \alpha \cup \{ \xi \} < \gamma
                     \text{ and } \\
&& \hspace{1.5cm}
                     (\forall \eta < \psi (m))(\forall \beta < \alpha)
                     [K_{\Omega} \beta < \psi (m) \Rightarrow
                      F^\beta (\eta) < \gamma]              
                    \}.
\end{eqnarray*}
We can see that $\psi$ is a $\Sigma_1$-function in the same way as we
 see that $F^\alpha$ is so.

\begin{claim}
$\psi (m) < \Omega_1$ for all $m \in \omega$.
\end{claim}

%\begin{proof}[of Claim]
We show that $\psi (m) < \Omega_1$ holds by (side) induction on
 $m$.
In the base case, $\psi (0) < \Omega_1$ holds since 
$K_{\Omega} \alpha \cup \{\xi\} < \Omega_1$ and 
$\Omega_1$ is closed under the
 function $[\EN]$.
Consider the induction step.
Let $\eta < \psi (m)$.
Then Side Induction Hypothesis implies $\eta < \psi (m) < \Omega_1$.
Hence (Main) Induction Hypothesis enables us to deduce 
$F^\beta (\eta) < \Omega_1$ for all $\beta < \alpha$.
Let us define a function 
$G: \{ \beta < \alpha \mid K_{\Omega} \beta < \psi (m) \}
 \rightarrow \Omega_1$
by $\beta \mapsto F^\beta (\eta)$.
One can see that $G$ is a $\Sigma_1$-function.
On the other hand 
$\# \{ \beta < \alpha \mid K_{\Omega} \beta < \psi (m) \} \leq \omega$
since $\psi (m) < \Omega_1$.
Here we recall that $\Omega_1$ denotes the least recursively regular
 ordinal 
$\omega_1^{\mathrm{CK}}$ and hence $L_{\Omega_1}$ is closed under functions
whose graphs are of $\Sigma_1$ in $L_{\Omega_1}$.
From these we have inequality
$$%F^\beta (\eta) < 
 \psi (m+1) \leq
 \sup \{ G (\beta) \mid \beta < \alpha \text{ and }
         K_{\Omega} \beta < \psi (m)
      \}
 < \Omega_1,
$$
concluding the claim.
%\qed
%\end{proof}

By the claim
$\psi$ is a $\Sigma_1$-function in 
$L_{\Omega_1}$ from $\omega$ to $\Omega_1$.
Hence $\sup_{m \in \omega} \psi (m) < \Omega_1$.
Define an ordinal $\gamma$ by
$\gamma = \sup_{m \in \omega} \psi (m)$.
Then 
$\omega^\gamma = \gamma$, 
$K_{\Omega} \alpha \cup \{ \xi \} < \gamma$ and 
$K_{\Omega} \beta < \gamma \Rightarrow F^\beta (\eta) < \gamma$
for all $\eta < \xi$ and for all $\beta < \alpha$.
This implies $F^\alpha (\xi) \leq \gamma < \Omega_1$.
\qed
\end{proof}

\begin{proposition}
\label{p:F^a(eta)}
For any $\alpha \in \Ord$, for any $\eta, \xi < \Omega_1$ and for any
 ordinal function $F: \Omega_1 \rightarrow \Omega_1$, if 
$\eta < F^\alpha (\xi)$, then 
$F^\alpha (\eta) \leq F^\alpha (\xi)$.
\end{proposition}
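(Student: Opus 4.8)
The plan is to argue directly from the definition of $F^{\alpha}$, distinguishing only the degenerate case $\alpha = 0$; in particular no transfinite induction on $\alpha$ is required.

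Assume first $\alpha > 0$ and set $\gamma := F^{\alpha}(\xi)$. Being the minimum of the class of ordinals occurring in the definition of $F^{\alpha}$, $\gamma$ is itself a member of that class, i.e.\ $\gamma$ satisfies the three defining clauses: $\omega^{\gamma} = \gamma$; $K_{\Omega}\alpha \cup \{\xi\} < \gamma$; and $(\forall\zeta < \gamma)(\forall\beta < \alpha)\bigl(K_{\Omega}\beta < \gamma \Rightarrow F^{\beta}(\zeta) < \gamma\bigr)$. I claim that $\gamma$ also lies in the class defining $F^{\alpha}(\eta)$. The first clause is unchanged. For the second, $K_{\Omega}\alpha < \gamma$ is already in hand, and $\eta < \gamma$ is precisely the hypothesis $\eta < F^{\alpha}(\xi)$. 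The third clause refers only to $\alpha$ and to $\gamma$, and not at all to the argument of $F^{\alpha}$, so it transfers verbatim. Hence $\gamma$ witnesses that the class defining $F^{\alpha}(\eta)$ is nonempty, and $F^{\alpha}(\eta)$, being its minimum, satisfies $F^{\alpha}(\eta) \le \gamma = F^{\alpha}(\xi)$.

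For $\alpha = 0$ the claim reduces to the assertion that $\eta < F(\xi)$ implies $F(\eta) \le F(\xi)$, i.e.\ to a property of the base function $F$ itself; this holds for the denotations of $\Suc$ (where $\eta < \xi + 1$ already forces $\eta \le \xi$) and of $\EN$ (a ``least epsilon number strictly above'' operator, for which any $\eta$ below the least epsilon number exceeding $\xi$ has that same epsilon number as an upper bound for $F(\eta)$), which are the functions to which the proposition is actually applied. The uniform reason the argument works is that the defining condition for $F^{\alpha}(\,\cdot\,)$ depends on its argument only through the single monotone requirement ``argument $< \gamma$''; any ordinal $\gamma$ realising the value at $\xi$ and lying above $\eta$ therefore realises it at $\eta$ as well. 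Accordingly there is no real obstacle in the principal case $\alpha > 0$; the one delicate point is the level $\alpha = 0$, where the conclusion rests on a property of $F$ rather than on the iteration.
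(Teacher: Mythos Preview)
Your argument is correct and is essentially the paper's own: both show directly (no induction on $\alpha$) that $\gamma := F^{\alpha}(\xi)$ lies in the defining class for $F^{\alpha}(\eta)$, whence $F^{\alpha}(\eta) \le \gamma$ by minimality. The only organisational difference is that the paper splits on $\eta \le \xi$ versus $\xi < \eta$ rather than on $\alpha = 0$ versus $\alpha > 0$; your split is in fact the cleaner one, since for $\alpha > 0$ the hypothesis $\eta < F^{\alpha}(\xi)$ alone suffices without comparing $\eta$ to $\xi$, and you make explicit what the paper leaves tacit, namely that at level $\alpha = 0$ the conclusion is a property of the base function $F$ itself (satisfied by the intended $\Suc$ and $\EN$) rather than a consequence of the iteration.
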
 

\begin{proof}
If $\eta \leq \xi$, then $F^\alpha (\eta) \leq F^\alpha (\xi)$ by the
 definition of $F^\alpha (\eta)$.
Let us consider the case $\xi < \eta < F^\alpha (\xi)$.
In this case 
$K_\Omega \alpha \cup \{ \eta \} < F^\alpha (\xi)$
by the definition of $F^\alpha (\xi)$.
Suppose that 
$\beta < \alpha $,
$\gamma < F^\alpha (\xi)$ and $K_\Omega \beta < F^\alpha (\xi)$.
Then 
$F^\beta (\gamma) < F^\alpha (\xi)$
again by the definition of $F^\alpha (\xi)$.
By the minimality of $F^\alpha (\eta)$ we can conclude
$F^\alpha (\eta) \leq F^\alpha (\xi)$.
\qed
\end{proof}

\begin{definition}
\label{def:val}
We define the value $[\alpha] \in \Ord$ of an ordinal term 
$\alpha \in \OT$ by recursion on the length of $\alpha$.
\begin{enumerate}
\item $[0] = 0$ and $[\Omega] = \Omega_1$.
\item $[\alpha + \beta] = [\alpha] + [\beta]$.
\item $\left[ \varphi \alpha \beta \right] = 
       \left[ \varphi \right] [\alpha] [\beta]$, where 
      $\left[ \varphi \right]$ is the standard Veblen function, i.e., \\
      $\left\{
       \begin{array}{rcll}
       [\varphi] 0 \beta &=& \omega^{\beta}, & \\
       \left[ \varphi \right] (\alpha +1) 0 &=&  
       \sup \{ (\left[ \varphi \right] \alpha)^n 0 \mid n \in \omega\},
       & \\
       \left[ \varphi \right] \gamma 0 &=& 
       \sup \{ \left[ \varphi \right] \alpha 0 \mid 
               \alpha < \gamma \} & 
       \text{if } \gamma \in \Lim, \\
       \left[ \varphi \right] (\alpha +1) (\beta +1) &=&
       \sup \{ (\left[ \varphi \right] \alpha)^n ([\varphi] (\alpha +1) \beta +1 
               \mid n \in \omega \}, & \\
       \left[ \varphi \right] \gamma (\beta +1) &=&
       \sup \{ \left[ \varphi \right]  \alpha (\left[ \varphi \right] \gamma \beta +1)
               \mid \alpha < \gamma \} &
       \text{if } \gamma \in \Lim, \\
       \left[ \varphi \right] \alpha \gamma &=&
       \sup \{ \left[ \varphi \right] \alpha \beta \mid 
              \beta < \gamma
            \} & \text{if } \gamma \in \Lim.
       \end{array}\right.
      $
\item $[\Omega^\alpha \cdot \xi] = 
       \Omega_1^{[\alpha]} \cdot [\xi]$.
\item $[\Suc (\alpha)] = [\Suc] ([\alpha])$, where
      $[\Suc]$ denotes the ordinal successor $\alpha \mapsto \alpha +1$.
      Clearly $\{ [\Suc] (\xi) \mid \xi \in \Omega_1 \} \subseteq \Omega_1$.
\item $[\EN (\alpha)] = [\EN] ([\alpha])$, where the function
      $[\EN]: \Ord \rightarrow \Ord$ is defined by
      $[\EN] (\alpha) = \min
       \{ \xi \in \Ord \mid \omega^\xi = \xi \text{ and } \alpha < \xi
      \}$.
      It is also clear that 
      $\{ [\EN] (\xi) \mid \xi \in \Omega_1 \} \subseteq \Omega_1$ holds.
\item $\left[ F^\alpha (\xi) \right] = 
       \left[ F \right]^{[\alpha]} ([\xi])$. 
\label{def:val:4}
%\comments{the condition $\eta < \gamma$ cannot be replaced by $\eta < \xi$}
\end{enumerate}
\end{definition}

%We extend the semantic $[\cdot]$ to a semantic for $\mathcal{T} (\LID)$ in the most obvious way so that $[t] \in \mathbb N$ for each closed term $t \in \mathcal{T} (\LID)$.
%For instance $[\ul{m}] = m$ for each numeral $\ul{m}$ corresponding tothe natural $m$.

\begin{definition}
For all $\alpha, \beta \in \OT$,
$\alpha < \beta$ if $[\alpha] < [\beta]$, and 
$\alpha = \beta$ if $[\alpha] = [\beta]$.
\end{definition}

We will identify each element $\alpha \in \OT$ with its value $[\alpha] \in \Ord$.
Accordingly we will write $K_{\Omega} \alpha$ instead of 
$K_{\Omega} [\alpha]$ for $\alpha \in \OT$.
Further for a finite set $K \subseteq \Ord$ we write
$K_{\Omega} K$ to denote the finite set 
$\bigcup_{\xi \in K} K_{\Omega} \xi$.
By this identification, $\AI$ is the set of  
{\em additively indecomposable} ordinals and $\SC$ is the set of 
{\em strongly critical} ordinals, i.e,
$\SC \subseteq \AI \subseteq \Lim \cup \{1\} \subseteq \Ord$.

\begin{corollary}
$F^\alpha (\xi) < \Omega$ for any $F \in \opF$ and $\xi < \Omega$.
\label{c:F^a<Omega}
\end{corollary}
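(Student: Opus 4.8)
The plan is to reduce the corollary to the Proposition stating that, for $\alpha < \varepsilon_{\Omega_1+1}$, the operation $G \mapsto G^{\alpha}$ preserves both $\Sigma_1$-definability in $L_{\Omega_1}$ and the property of mapping $\Omega_1$ into $\Omega_1$. To be in a position to apply it I would first prove, by a single induction on the length of terms (recall that $\OT$ and $\opF$ are generated by one simultaneous recursion), the two auxiliary statements: (A) $[\alpha] < \varepsilon_{\Omega_1+1}$ for every $\alpha \in \OT$; and (B) for every $F \in \opF$ the ordinal function $[F]$ is $\Sigma_1$-definable in $L_{\Omega_1}$ and satisfies $[F](\xi) < \Omega_1$ whenever $\xi < \Omega_1$. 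Granting (A) and (B), the corollary is immediate: for $F \in \opF$, $\alpha \in \OT$ and $\xi < \Omega$ (which is what the term $F^{\alpha}(\xi) \in \SC$ presupposes), (A) gives $[\alpha] < \varepsilon_{\Omega_1+1}$ and (B) gives the hypotheses of the Proposition for $[F]$, so $[F^{\alpha}(\xi)] = [F]^{[\alpha]}([\xi]) < \Omega_1 = [\Omega]$, i.e.\ $F^{\alpha}(\xi) < \Omega$.

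For (B) the base cases $F \in \{\Suc, \EN\}$ are covered by Definition \ref{def:val}(5),(6): the ordinal successor and the enumeration function $[\EN]$ of $\varepsilon$-numbers are $\Sigma_1$-definable in $L_{\Omega_1}$ and map $\Omega_1$ into $\Omega_1$. For the step $F = G^{\alpha}$ with $G \in \opF$ and $\alpha \in \OT$, the induction hypothesis supplies (A) for $\alpha$ and (B) for $G$, whence the Proposition applied to $[G]$ and $[\alpha]$ delivers exactly the two properties required of $[F] = [G]^{[\alpha]}$. For (A) one checks the defining clauses of $\OT$, using that $\varepsilon_{\Omega_1+1}$ is an $\varepsilon$-number above $\Omega_1$ — hence closed under $+$, under $\gamma \mapsto \omega^{\gamma}$, and (since $\Omega_1$ is itself an $\varepsilon$-number, so $\Omega_1^{\gamma} = \omega^{\Omega_1\cdot\gamma}$) under $\gamma,\zeta \mapsto \Omega_1^{\gamma}\cdot\zeta$ — and that $\Omega_1$ is admissible and closed under the Veblen function $[\varphi]$, the successor $[\Suc]$ and the $\varepsilon$-enumeration $[\EN]$. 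The routine clauses ($0$, $\Omega$, sums of $\AI$-terms, and $\varphi\alpha\beta$, $\Suc(\alpha)$, $\EN(\alpha)$, whose arguments lie in $\OT\seg\Omega$ and hence below $\Omega_1$) pose no difficulty; the two clauses worth noting are $\Omega^{\alpha}\cdot\xi$, whose value $\Omega_1^{[\alpha]}\cdot[\xi]$ stays below $\varepsilon_{\Omega_1+1}$ because $[\alpha] < \varepsilon_{\Omega_1+1}$ by hypothesis and $[\xi] < \Omega_1$, and $F^{\alpha}(\xi) \in \SC$, for which the induction hypothesis gives (B) for $F$ and (A) for $\alpha$, so the Proposition yields $[F^{\alpha}(\xi)] = [F]^{[\alpha]}([\xi]) < \Omega_1 < \varepsilon_{\Omega_1+1}$.

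The only non-routine ingredient is this last one, namely controlling superscripts $\alpha$ whose value may satisfy $\Omega_1 \le [\alpha] < \varepsilon_{\Omega_1+1}$; but that is precisely what the preceding Proposition was designed to handle, so the real work here is organisational: carrying (A) and (B) through one simultaneous induction and recording the relevant closure properties of $\Omega_1$ and $\varepsilon_{\Omega_1+1}$ under the ordinal operations appearing in Definition \ref{def:val}.
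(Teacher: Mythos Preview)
Your proposal is correct and matches the paper's approach: the paper's entire proof is the single line ``Proof by induction over the build-up of $F \in \opF$,'' which is exactly the induction you carry out for (B), with the Proposition supplying the inductive step $G \mapsto G^{\alpha}$. You go further than the paper in making explicit the auxiliary fact (A) that $[\alpha] < \varepsilon_{\Omega_1+1}$ for all $\alpha \in \OT$, which is indeed needed to invoke the Proposition but which the paper leaves tacit; your simultaneous induction over the joint build-up of $\OT$ and $\opF$ is the clean way to handle this, and the closure properties you list for $\Omega_1$ and $\varepsilon_{\Omega_1+1}$ are the right ones.
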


\begin{proof}
Proof by induction over the build-up of $F \in \opF$.
\end{proof}

%By this corollary $\alpha < \varepsilon_{\Omega +1}$.

\begin{corollary}
\begin{enumerate}
\item $K_{\Omega} 0 = K_{\Omega} \Omega = \emptyset$.
\item If $K_{\Omega} \alpha < \xi $ and $\xi \in \SC$, then
      $K_{\Omega} \Suc(\alpha) < \xi$. 
%      if $\alpha < \Omega$.
\item $K_{\Omega} \EN(\alpha) = \{ \EN(\alpha) \}$ 
      (since $\alpha < \Omega$). 
%      $K_{\Omega} \EN(\alpha) = K_{\Omega} \alpha$ if $\Omega \leq \alpha$.
\item If $K_{\Omega} \alpha \cup K_{\Omega} \beta < \xi$ and
      $\xi \in \SC$, then 
      $K_{\Omega} (\alpha + \beta) < \xi$. 
%      (since $\alpha + \beta =_{NF} \alpha + \beta$).
\item $K_{\Omega} \varphi \alpha \beta = \{ \varphi \alpha \beta \}$
      (since $\alpha, \beta < \Omega$).
      Further, if $\alpha , \beta < \xi$ and $\xi \in \SC$, then
      $\varphi \alpha \beta < \xi$.
\item $K_{\Omega} F^\alpha (\xi) = \{ F^{\alpha} (\xi) \}$
      (since $\xi < \Omega$).
\end{enumerate}
\end{corollary}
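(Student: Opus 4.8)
\noindent The plan is to obtain all six items by unfolding the recursive definition of $K_\Omega$, after first recording two auxiliary facts. The first is that the ordinal $\Omega\;(=\Omega_1=\omega_1^{\mathrm{CK}})$ is recursively regular, hence strongly critical, hence closed under ordinal addition, the successor $[\Suc]$, $[\EN]$, $[\varphi]$ and — by Corollary~\ref{c:F^a<Omega} — under every $[F]^\alpha$ with $F\in\opF$ and $\alpha\in\OT$; consequently every term of $\OT \seg \Omega$ denotes an ordinal $<\Omega_1$, and for $0<\zeta<\Omega_1$ the base-$\Omega_1$ normal form of $\zeta$ is the trivial one $\zeta=_{NF}\Omega_1^0\cdot\zeta$, so $K_\Omega\zeta=\{\zeta\}$. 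The second is that every $\xi\in\SC$ is additively indecomposable, in particular a limit ordinal exceeding $1$, by the inclusion $\SC\subseteq\AI\subseteq\Lim\cup\{1\}$ recorded above (together with $1\notin\SC$).

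Granting this, I would dispatch items 1, 3, 5 and 6 first. Item 1 is immediate from the definitions: $0$ is the empty sum and $\Omega$ is a generating constant of the notation system, so neither carries a coefficient and $K_\Omega 0=K_\Omega\Omega=\emptyset$. For items 3, 5 and 6 the hypotheses $\alpha,\beta,\xi<\Omega$ together with closure of $\Omega_1$ under $[\EN]$, under $[\varphi]$, and (through Corollary~\ref{c:F^a<Omega}) under $[F]^\alpha$ place $\EN(\alpha)$, $\varphi\alpha\beta$, and $F^\alpha(\xi)$ strictly below $\Omega_1$; each being a positive ordinal below $\Omega_1$, its normal form is trivial and $K_\Omega$ returns the announced singleton. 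The supplementary clause of item 5 — that $\varphi\alpha\beta<\xi$ whenever $\alpha,\beta<\xi\in\SC$ — is just the defining closure property of a strongly critical ordinal.

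Next I would treat items 2 and 4. For item 4 I would write $\alpha=_{NF}\Omega_1^{\alpha_1}a_1+\cdots+\Omega_1^{\alpha_m}a_m$ and $\beta=_{NF}\Omega_1^{\beta_1}b_1+\cdots+\Omega_1^{\beta_n}b_n$ and read off the normal form of $\alpha+\beta$: the terms of $\alpha$ with exponent exceeding $\beta_1$ survive unchanged, at most one further term of $\alpha$ (the one with exponent $\beta_1$, should it occur) replaces $a_i$ and $b_1$ by the single merged coefficient $a_i+b_1$, and every remaining term of $\alpha$ is absorbed. Hence every coefficient and every exponent occurring in $\alpha+\beta$ is already one occurring in $\alpha$ or in $\beta$, apart from that one possible merged coefficient, so $K_\Omega(\alpha+\beta)\subseteq K_\Omega\alpha\cup K_\Omega\beta\cup\{a_i+b_1\}$; the first two sets lie below $\xi$ by hypothesis, and $a_i+b_1<\xi$ because $a_i,b_1<\xi$ and $\xi$ is additively indecomposable. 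Item 2 is then the instance $\beta=1$ of this computation (using $K_\Omega 1=\{1\}$ and $1<\xi$ since every strongly critical ordinal exceeds $1$), or, more directly, $\alpha<\Omega_1$ forces $\Suc(\alpha)=\alpha+1<\Omega_1$ with $K_\Omega(\alpha+1)=\{\alpha+1\}$, and $\alpha+1<\xi$ since $\alpha<\xi$ (or $\alpha=0$) and $\xi$ is a limit.

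I expect the one point that needs genuine attention to be the normal-form bookkeeping in item 4 — checking that $\alpha+\beta$ produces at most the single ``carry'' coefficient $a_i+b_1$ and no exponent not already present in $\alpha$ or $\beta$ — after which additive indecomposability of $\xi$ settles the inequality. Everything else is a routine unfolding of the definitions.
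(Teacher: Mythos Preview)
The paper gives no proof for this Corollary; it is stated as a routine consequence of the definition of $K_\Omega$ and of the inclusion $\SC\subseteq\AI$. Your proposal supplies exactly the expected verification. The key observation you isolate --- that for $0<\zeta<\Omega_1$ the base-$\Omega_1$ normal form is the trivial one $\zeta=_{NF}\Omega_1^{0}\cdot\zeta$, hence $K_\Omega\zeta=\{\zeta\}$ --- dispatches items 3, 5 and 6 at once, and your normal-form bookkeeping for item 4 (exponents of $\alpha+\beta$ come from those of $\alpha$ or $\beta$, coefficients likewise except for the single possible merged coefficient $a_i+b_1$, which is bounded by the additively indecomposable $\xi$) is accurate.

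One small caveat on item 1: your argument that $K_\Omega\Omega=\emptyset$ because $\Omega$ is a ``generating constant'' is a syntactic argument, whereas $K_\Omega$ is defined \emph{semantically} via the base-$\Omega_1$ normal form of the ordinal value. Strictly by that definition, $\Omega_1=_{NF}\Omega_1^{1}\cdot 1$ yields $K_\Omega\Omega_1=\{1\}$ rather than $\emptyset$. This is harmless for everything downstream (every $\xi\in\SC$ that actually occurs is an epsilon number or $\Omega$ itself, hence $>1$) and reflects a minor imprecision in the paper's own statement rather than a gap in your reasoning.
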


By Corollary \ref{c:F^a<Omega} each function symbol from $\opF$ defines a weakly
increasing function
$F: \Omega \rightarrow \Omega$ such that 
$\xi < F(\xi)$ holds for all $\xi \in \Omega$.
In the rest of this section let $F$ denote such a function.
For a finite set $K \subseteq \Ord$ we will use the notation
$F[K] (\xi)$ to abbreviate $F(\max (K \cup \{ \xi \}))$.

\begin{lemma}
\label{lem:F[xi]}
Let $K \subseteq \Ord$ be a finite set such that $K < \Omega$. Then
$(F [K])^\alpha (\xi) \leq F^\alpha [K] (\xi)$ for all
$\xi < \Omega$.
\end{lemma}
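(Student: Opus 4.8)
The plan is to argue by transfinite induction on $\alpha$. The key idea is that it suffices to show that the ordinal $\gamma := F^\alpha[K](\xi) = F^\alpha(\max(K\cup\{\xi\}))$ itself lies in the class of ordinals whose minimum defines $(F[K])^\alpha(\xi)$; since $(F[K])^\alpha(\xi)$ is by definition the \emph{least} such ordinal, this gives $(F[K])^\alpha(\xi)\le\gamma$ at once. Before starting the induction I would note that $F[K]\colon\Omega\to\Omega$ is weakly increasing and satisfies $\xi<F[K](\xi)$ (because $\xi\le\max(K\cup\{\xi\})<F(\max(K\cup\{\xi\}))$), so the operator $(F[K])^{(\cdot)}$ is legitimately defined on it, and that the right-hand side of the claimed inequality is $<\Omega$ by Corollary~\ref{c:F^a<Omega}.

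The base case $\alpha=0$ is immediate, since $(F[K])^0(\xi)=F(\max(K\cup\{\xi\}))=F^0(\max(K\cup\{\xi\}))=F^0[K](\xi)$, in fact with equality. For $\alpha>0$, put $\mu=\max(K\cup\{\xi\})$ and $\gamma=F^\alpha(\mu)$. By the definition of the collapsing operator we have $\omega^\gamma=\gamma$ and $K_\Omega\alpha\cup\{\mu\}<\gamma$; in particular $\xi\le\mu<\gamma$, and — this is the one observation that makes everything work — every element of $K$ is $\le\mu<\gamma$, so $K<\gamma$. It then remains to check the recursion clause in the definition of $(F[K])^\alpha(\xi)$: given $\eta<\gamma$ and $\beta<\alpha$ with $K_\Omega\beta<\gamma$, we must see $(F[K])^\beta(\eta)<\gamma$. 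By the induction hypothesis applied to $\beta$, $(F[K])^\beta(\eta)\le F^\beta[K](\eta)=F^\beta(\max(K\cup\{\eta\}))$. Since $K<\gamma$ and $\eta<\gamma$ we get $\max(K\cup\{\eta\})<\gamma$, and hence the defining property of $\gamma=F^\alpha(\mu)$ yields $F^\beta(\max(K\cup\{\eta\}))<\gamma$. Combining, $(F[K])^\beta(\eta)<\gamma$, so $\gamma$ witnesses membership in the class defining $(F[K])^\alpha(\xi)$ and therefore $(F[K])^\alpha(\xi)\le\gamma=F^\alpha[K](\xi)$, completing the induction.

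I do not expect a genuine obstacle here beyond careful bookkeeping; the only mildly delicate point, already highlighted above, is recognising that the side condition $K_\Omega\alpha\cup\{\max(K\cup\{\xi\})\}<\gamma$ that is built into $F^\alpha(\max(K\cup\{\xi\}))$ automatically forces $K<\gamma$, which is exactly what is needed to substitute $\max(K\cup\{\eta\})$ into the recursion clause for $F^\alpha$ when invoking the induction hypothesis.
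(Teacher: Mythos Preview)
Your proof is correct and follows essentially the same approach as the paper: transfinite induction on $\alpha$, verifying that $\gamma:=F^\alpha(\max(K\cup\{\xi\}))$ lies in the class whose minimum defines $(F[K])^\alpha(\xi)$. The paper's write-up is terser and contains what appears to be a typographical slip (an occurrence of $F^\alpha[K](\eta)$ where $F^\alpha[K](\xi)$ is meant), but the key observation---that $K\le\max(K\cup\{\xi\})<\gamma$, so $\max(K\cup\{\eta\})<\gamma$ can be fed back into the recursion clause for $F^\alpha$---is identical to yours.
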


\begin{proof}
By induction on $\alpha$.
For the base case
$(F[K])^0 (\xi) = F[K] (\xi) = F^0 [K] (\xi)$.
Suppose $\alpha > 0$.
Then
\begin{equation}
K_{\Omega} \alpha \cup \{ \xi \} < 
F^\alpha (\xi) \leq F^\alpha [K] (\xi).
\label{e:l:F[xi]:1}
\end{equation}
Assume that
$\eta < F^\alpha [K] (\xi)$,
$\beta < \alpha$ and 
$K_{\Omega} \beta < F^\alpha [K] (\xi)$.
Then $\eta < \Omega$, and hence 
$(F[K])^\beta (\eta) \leq F^\beta [K] (\eta)$ by IH.
Hence 
\begin{eqnarray}
(F[K])^\beta (\eta) &\leq& F^\beta [K] (\eta), 
\nonumber \\
&<&
F^\alpha [K] (\eta)
\quad \text{ since }
K_{\Omega} K < F^\alpha [K] (\eta).
\label{e:l:F[xi]:2}
\end{eqnarray}
By conditions (\ref{e:l:F[xi]:1}) and (\ref{e:l:F[xi]:2})
we conclude
$(F[K])^\alpha (\xi) \leq F^\alpha [K] (\xi)$.
\qed
\end{proof}

\begin{lemma}
\label{lem:F^a^b}
$(F^\alpha)^\beta (\xi) \leq F^{\alpha + \beta} (\xi)$ 
for all $\xi < \Omega$.
\end{lemma}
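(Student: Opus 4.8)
The plan is to argue by transfinite induction on $\beta$, following exactly the pattern of the preceding lemmas: show that $F^{\alpha+\beta}(\xi)$ already satisfies the three clauses inside the $\min$ defining $(F^\alpha)^\beta(\xi)$, which immediately gives $(F^\alpha)^\beta(\xi)\le F^{\alpha+\beta}(\xi)$.

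For $\beta=0$ there is nothing to prove, since $(F^\alpha)^0(\xi)=F^\alpha(\xi)=F^{\alpha+0}(\xi)$. So assume $\beta>0$ and put $\gamma:=F^{\alpha+\beta}(\xi)$; then $\alpha+\beta>0$, so the defining equation for $F^{\alpha+\beta}(\xi)$ gives $\omega^\gamma=\gamma$ and $K_{\Omega}(\alpha+\beta)\cup\{\xi\}<\gamma$. Since in the $\Omega_1$-normal form every member of $K_{\Omega}\beta$ is bounded by some member of $K_{\Omega}(\alpha+\beta)$ (a routine fact about $K_{\Omega}$; the only delicate term, the leading coefficient of $\beta$, is dominated by the possibly enlarged leading coefficient of $\alpha+\beta$), we obtain $K_{\Omega}\beta\cup\{\xi\}<\gamma$. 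This settles the first two clauses.

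For the third clause, fix $\eta<\gamma$ and $\delta<\beta$ with $K_{\Omega}\delta<\gamma$; the goal is $(F^\alpha)^\delta(\eta)<\gamma$. The induction hypothesis gives $(F^\alpha)^\delta(\eta)\le F^{\alpha+\delta}(\eta)$, so it suffices to show $F^{\alpha+\delta}(\eta)<F^{\alpha+\beta}(\xi)$. Since $\delta<\beta$ we have $\alpha+\delta<\alpha+\beta$, so this is an instance of case~(1) of the corollary stated right after the definition of the collapsing operators, provided $K_{\Omega}(\alpha+\delta)\cup\{\eta\}<\gamma$. As $\eta<\gamma$ already, the point is $K_{\Omega}(\alpha+\delta)<\gamma$; I would derive this from $K_{\Omega}\delta<\gamma$ together with $K_{\Omega}\alpha<\gamma$ and the clause of the corollary on the properties of $K_{\Omega}$ governing $K_{\Omega}(\cdot+\cdot)$, using that $\gamma=F^{\alpha+\beta}(\xi)$ is strongly critical. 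Then $\gamma$ meets all three clauses and the induction closes.

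The step I expect to be the main obstacle is precisely this last one: keeping $K_{\Omega}(\alpha+\delta)$ below $\gamma$ uniformly for all $\delta<\beta$ with $K_{\Omega}\delta<\gamma$, since adding $\alpha$ on the left can, for ``small'' $\delta$, revive coefficients of $\alpha$ that were absorbed in $\alpha+\beta$. This is handled by the $K_{\Omega}$-absorption facts for $+$ plus the strong criticality of $\gamma$; alternatively, if one prefers to argue function-theoretically rather than by $K_{\Omega}$-bookkeeping, Lemma~\ref{lem:F[xi]} lets one push a finite set of ``extra'' coefficients of $\alpha$ into the base function before iterating, reducing to the case where the surviving part of $\alpha$ is controlled by $\gamma$. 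Apart from this bookkeeping, the argument is the same monotone transfinite induction used for Lemma~\ref{lem:F[xi]} and the collapsing corollary.
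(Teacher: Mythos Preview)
Your approach matches the paper's exactly: induction on $\beta$, showing that $\gamma:=F^{\alpha+\beta}(\xi)$ satisfies the clauses in the $\min$ defining $(F^\alpha)^\beta(\xi)$. The paper is even terser---it records $K_\Omega\beta\cup\{\xi\}<F^\beta(\xi)\le F^{\alpha+\beta}(\xi)$ for the first two clauses and then simply asserts $(F^\alpha)^{\beta'}(\eta)\le F^{\alpha+\beta'}(\eta)<F^{\alpha+\beta}(\xi)$ for the closure clause, with no mention of $K_\Omega(\alpha+\beta')$ at all; the absorption concern you single out as ``the main obstacle'' (coefficients of $\alpha$ that vanish in $\alpha+\beta$ but resurface in $\alpha+\beta'$) is exactly the step the paper passes over in silence, so your write-up is at least as careful as the original on this point.
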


\begin{proof}
By induction on $\beta$.
For the base case 
$(F^\alpha)^0 (\xi) = F^\alpha (\xi) = F^{\alpha + 0} (\xi)$.
Suppose $\beta > 0$.
Then
\begin{equation}
K_{\Omega} \beta \cup \{ \xi \} < F^\beta (\xi) \leq
F^{\alpha + \beta} (\xi).
\label{e:l:F^a^b:1}
\end{equation}
Assume that 
$\eta < F^{\alpha + \beta} (\xi)$,
$\beta ' < \beta$ and 
$K_{\Omega} \beta' < F^{\alpha + \beta} (\xi)$.
Then $\eta < \Omega$, and hence
$(F^\alpha)^{\beta'} (\eta) \leq F^{\alpha + \beta'} (\eta)$ by IH.
Hence
\begin{equation}
(F^\alpha)^{\beta'} (\eta) \leq F^{\alpha + \beta'} (\eta) 
%\nonumber \\
<
F^{\alpha + \beta} (\xi).
%\quad \text{ since }
\label{e:l:F^a^b:2}
\end{equation}
By conditions (\ref{e:l:F^a^b:1}) and (\ref{e:l:F^a^b:2}) we can
 conclude
$(F^\alpha)^\beta (\xi) \leq F^{\alpha + \beta} (\xi)$.
\qed
\end{proof}

\section{An infinitary proof system $\IDomega$}

This section introduces the main definition of this paper. 
We introduce a new infinitary proof system $\IDomega$ to which the new
ordinal notation system is connected and into which every (finite) proof
in $\mathbf{ID}_1$ can be embedded in good order.
%Let $\Omega$ denote the first uncountable ordinal.
For each positive operator form $\mathcal A$ and for each ordinal term
$\alpha \in (\OT \seg \Omega) \cup \{ \Omega \}$ let $\itP{\ofA}{\alpha}$
be a new unary predicate symbol.
Let us define an infinitary language $\Lstar$ of $\IDomega$ by
$\Lstar = \LPA \cup \{ \neq, \nleq \} \cup
           \{ \itP{\ofA}{\alpha}, \neg \itP{\ofA}{\alpha} \mid 
              \alpha \in (\OT \seg \Omega) \cup \{ \Omega \} \text{ and }
              \ofA \text{ is a positive operator form} \}$.
Let us write $\itP{\ofA}{\Omega}$ to denote $\fpP{\ofA}$ to have the inclusion
$\LID \subseteq \Lstar$.
We write $\mathcal{T} (\Lstar)$ to denote the set of closed 
$\Lstar$-terms.
Specifically, the language $\Lstar$ contains complementary predicate
symbol $\neg P$ for each predicate symbol $P \in \Lstar$.
We note that the negation $\neg$ nor the implication $\rightarrow$ is
not included as a logical symbol.
The negation $\neg A$ is defined via de Morgan's law by
$\neg (\neg P (\vec{t})) :\equiv P (\vec{t})$ for an atomic formula 
$P(\vec{t})$, 
$\neg (A \wedge B) :\equiv \neg A \vee \neg B$,
$\neg (A \vee B) :\equiv \neg A \wedge \neg B$,
$\neg \forall x A :\equiv \exists x \neg A$ and
$\neg \exists x A :\equiv \forall x \neg A$.
The implication $A \rightarrow B$ is defined by
$\neg A \vee B$.
We start with technical definitions.
We will write $\itP{\ofA}{\alpha} t$ and $\neg \itP{\ofA}{\alpha} t$
respectively for 
$\itP{\ofA}{\alpha} (t)$ and $\neg \itP{\ofA}{\alpha} (t)$.

\begin{definition}[Complexity measures of $\Lstar$-formulas]
\begin{enumerate}
\item The {\em length} $\lh (A)$ of an $\Lstar$-formula $A$ is the
      number of the symbols $\itP{\ofA}{\alpha}$, 
      $\neg \itP{\ofA}{\alpha}$, $\vee$, $\wedge$, 
      $\exists$ and $\forall$ occurring in $A$.
\item The {\em rank} $\rk (A)$ of an $\Lstar$-formula $A$.
  \begin{enumerate}
  \item %$\rk (\itP{\ofA}{n} t) :=       \rk (\neg \itP{\ofA}{n} t) := \omega \cdot n$ if $n < \omega$.
      $\rk (\itP{\ofA}{\alpha} t) :=
       \rk (\neg \itP{\ofA}{\alpha} t) := \omega \cdot \alpha$. %if $\omega < \alpha$.
  \item $\rk (A) := 0$ if $A$ is an $\LID$-literal.
  \item $\rk (A \wedge B) := \rk (A \vee B) :=
       \max \{ \rk (A), \rk (B) \} +1$.
  \item $\rk (\forall x A) := \rk (\exists x A) :=
       \rk (A) +1$.
  \end{enumerate}
%\end{definition}
%
%\begin{definition}
\item The set $\kPi (A)$ of {\em $\Pi$-coefficients} of an $\Lstar$-formula $A$.
  \begin{enumerate}
  \item $\kPi (\itP{\ofA}{\alpha} t) := \{ 0 \}$,
      $\kPi (\neg \itP{\ofA}{\alpha} t) :=
      \{ 0, \alpha \}$.
  \item $\kPi (A) := \{ 0 \}$ if $A$ is an $\LID$-literal.
  \item $\kPi (A \wedge B) := \kPi (A \vee B) :=
       \kPi (A) \cup \kPi (B)$.
  \item $\kPi (\forall x A) := \kPi (\exists x A) :=
       \kPi (A)$.
  \end{enumerate}
%\end{definition}
%
%\begin{definition}
\item The set $\kSigma (A)$ of {\em $\Sigma$-coefficients} of an
 $\Lstar$-formula $A$. %is defined by

%\item 
$\kSigma (A) := \kPi (\neg A)$. %and 
\item The set $\kPS (A)$ of all the {\em coefficients} of an
      $\Lstar$-formula $A$. %is      defined by 

$\kPS (A) := \kPi (A) \cup \kSigma (A)$.
\item The set $\kPO (A)$ of {\em $\Pi$-coefficients} of an
      $\Lstar$-formula $A$ less than $\Omega$. %is      defined by 

$\kPO (A) := \kPi (A) \seg \Omega$.

The set $\kSO (A)$ and $\kO (A)$ are defined accordingly.
\end{enumerate}
\end{definition}

By definition $\rk (A) = \rk (\neg A)$, $\kPS (A) = \kPS (\neg A)$ and
$\kO (A) = \kO (\neg A)$.

\begin{definition}[Complexity measures of $\Lstar$-terms]
\begin{enumerate}
\item The {\em value} $\val (t)$ of a term 
$t \in \mathcal{T} (\LID) = \mathcal{T} (\LPA)$ is the value of the
      closed term $t$ in the standard model $\mathbb{N}$ of the Peano
      arithmetic $\mathrm{PA}$.
\item A complexity measure
$\matho: \mathcal{T} (\Lstar) \rightarrow 
 (\OT \seg \Omega) \cup \{ \Omega \}$
is defined by

%\begin{enumerate}
%\comments{the following provision shall be excluded}
%\item $\left\{
      $\left\{
       \begin{array}{rcll}
%       \kPS (t) &:=& \emptyset & 
%       \text{ for } t \in \mathcal{T} (\LID). \\
%       \kPS (\xi) &:=& \{ \xi \} & \text{ for } \xi \in \OT. \\
%       \end{array}
%       \right.$ %and 
%Further let
%\item $\left\{
%       \begin{array}{rcll}
       \matho (t) &:=& 0 & 
       \text{ if } t \in \mathcal{T} (\LID), %\text{ and }       t
       %\text{ is closed}. 
       \\
       \matho (\alpha) &:=& \xi & \text{ if } \alpha \in \OT.
       \end{array}\right.$
%       \right.$ %and 
%\item $\val (t) := m$, i.e.,
%$[\matho (t)] = [t]$ if $t \in \mathcal{T} (\LID)$.
\item The \emph{norm} $N(\alpha)$ of $\alpha \in \OT$. 
\begin{enumerate}
\item $N(0) = 0$ and $N (\Omega) =1$.
\item $N (\Suc(\alpha)) = N (\alpha) +1$.
\item $N (\EN (\alpha)) = N (\alpha) +1$.
\item $N(\alpha + \beta) = N(\alpha) + N(\beta)$.
\item $N(\varphi \alpha \beta) = N(\alpha) + N(\beta) +1$,
\item $N (\Omega^\alpha \cdot \xi) = N (\alpha) + N(\xi) + 1$.
\item $N (F^{\alpha} (\xi)) = N (F (\xi)) + N (\alpha)$.
\end{enumerate}
The norm is extended to a complexity measure
$N: \mathcal{T} (\Lstar) \rightarrow \mathbb{N}$ by
%\[

      $\left\{
       \begin{array}{rcll}
       N (t) &:=& \val (t) & 
       \text{ if } t \in \mathcal{T} (\LID), %\text{ and }       t
       %\text{ is closed}. 
       \\
       N (\alpha) &:=& N (\alpha) & \text{ if } \alpha \in \OT.
       \end{array}\right.$
%\]
\end{enumerate}
\end{definition}

By definition 
$N(\omega^\alpha) = N(\varphi 0 \alpha) = N(\alpha) +1$ and 
$N(m) = N(\omega^0 \cdot m) = m$ for any $m < \omega$.
This seems to be a good point to explain why we contain the constant 
$\Omega$ in $\OT$.
Having that $N(\Omega)=1$ makes some technicality easier.
%By definition $[t] = N (\matho (t))$ holds for every closed term $t \in \mathcal{T}(\LID)$.

\begin{definition}
We define a relation $\simeq$ between $\Lstar$-sentences and (infinitary)
propositional $\Lstar$-sentences.
%Definition of an embedding $(\cdot)^\ast$ of the $\Lstar$-formulas.
\begin{enumerate}
%\item $(\itP{\ofA}{\alpha} t)^\ast :\equiv \OR{\xi \in \alpha}
%       \ofA (\itP{\ofA}{\xi}, t)$.
\item $\neg \itP{\ofA}{\alpha} t: \simeq \AND{\xi \in \OT \seg \alpha}
       \neg \ofA (\itP{\ofA}{\xi}, t)$ and
      $\itP{\ofA}{\alpha} t: \simeq \OR{\xi \in \OT \seg \alpha}
       \ofA (\itP{\ofA}{\xi}, t)$.
\item $A \wedge B: \simeq \AND{\iota \in \{ \ul{0}, \ul{1} \}}
       A_{\iota}$ and
      $A \vee B: \simeq \OR{\iota \in \{ \ul{0}, \ul{1} \}}
       A_{\iota}$ where $A_{\ul{0}} \equiv A$ and
       $A_{\ul{1}} \equiv B$.
\item $\forall x A(x): \simeq
       \AND{t \in \mathcal{T}(\LID)} A(t)$ and
      $\exists x A(x): \simeq
       \OR{t \in \mathcal{T}(\LID)} A(t)$.  
\end{enumerate}
\end{definition}

We call an $\Lstar$-sentence $A$ a {\em $\bigwedge$-type} (conjunctive
type) if 
$A \simeq \bigwedge_{\iota \in J} A_\iota$ for some $A_\iota$, and 
a {\em $\bigvee$-type} (disjunctive type) if 
$A \simeq \bigvee_{\iota \in J} A_\iota$ for some $A_\iota$.
For the sake of simplicity we will write
$\bigwedge_{\xi < \alpha} A_\xi$ instead of
$\bigwedge_{\xi \in \OT \seg \alpha} A_\xi$
and write
$\bigvee_{\xi < \alpha} A_\xi$
accordingly.

\begin{lemma}
\label{lem:kPi}
\begin{enumerate}
\item %If $A \simeq \AND{\iota \in J} A_{\iota}$ or
      If either $A \simeq \AND{\iota \in J} A_{\iota}$ or
         $A \simeq \OR{\iota \in J} A_{\iota}$, then
      for all $\iota \in J$,
      $\kPi (A_{\iota}) \subseteq \kPi (A) \cup \{\matho (\iota)\}$ 
      and
      $\kSigma (A_{\iota}) \subseteq \kSigma (A) \cup \{\matho (\iota)\}$.
\label{lem:kPi:1}
\item %If $A \simeq \AND{\iota \in J} A_{\iota}$, then
      For any $\alpha \in \OT$,
      if $A \simeq \AND{\xi < \alpha} A_{\xi}$, then
      $(\exists \sigma \in \kPi (A)) (\forall \xi < \alpha)
       [\xi \leq \sigma]$.
\label{lem:kPi:2}
\item For any $\Lstar$-sentence $A$, $\rk (A) = \omega \cdot \max \kPS (A) + n$ for some $n \leq \lh (A)$.
\label{lem:kPi:3}
\item If $\rk (A) = \Omega$, then either 
      $A \equiv \itP{\ofA}{\Omega} t$ or $A \equiv \neg \itP{\ofA}{\Omega} t$.
\label{lem:kPi:4}
\item 
      If either $A \simeq \AND{\iota \in J} A_{\iota}$ or
         $A \simeq \OR{\iota \in J} A_{\iota}$, then
      for all $\iota \in J$,
      $N(\rk (A_\iota)) \leq 
       \max \{ N (\rk (A)), 2 \cdot N(\iota) \}$.
\label{lem:kPi:5}
\end{enumerate}
\end{lemma}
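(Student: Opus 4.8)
The plan is to establish the five items by two kinds of argument: items (3) and (4) by an induction on the build-up of the $\Lstar$-sentence $A$, and items (1), (2), (5) by a case distinction along the three defining clauses of the relation $\simeq$. Only items (1) and (5) need real work, and there only the atomic clause of $\simeq$.

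For (3) I would induct on $A$ using the recursion equations for $\rk$, $\kPi$, $\kSigma$ and $\lh$. The base cases are read off directly: an $\LID$-literal has $\rk(A)=0$ and $\kPS(A)=\{0\}$, while for $A\equiv\itP{\ofA}{\alpha}t$ or $\neg\itP{\ofA}{\alpha}t$ one has $\kPS(A)=\{0,\alpha\}$, so $\max\kPS(A)=\alpha$ and $\rk(A)=\omega\cdot\alpha+0$. In the induction step one uses that $\kPS$ distributes over $\wedge,\vee$ and is preserved by $\forall,\exists$, so $\max\kPS(A)$ is the maximum of the corresponding values of the immediate subformulas; since every connective and quantifier adds exactly $1$ to the rank, and since $\sigma<\Omega$ implies $\omega\cdot\sigma<\Omega$, the finitely many extra $1$'s stay inside $[\omega\cdot\max\kPS(A),\,\omega\cdot\max\kPS(A)+\omega)$ and accumulate precisely into the remainder $n$, which is bounded by the number of connectives of $A$, i.e.\ by $\lh(A)$. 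Item (4) is then immediate: a rank equal to $\Omega$ is neither $0$ nor a successor, so $A$ is not an $\LID$-literal and is neither a conjunction/disjunction nor a quantified formula (all of which have successor rank), whence $A\equiv\itP{\ofA}{\beta}t$ or $\neg\itP{\ofA}{\beta}t$ with $\omega\cdot\beta=\Omega$; as $\Omega$ is closed under $\xi\mapsto\omega\cdot\xi$ this forces $\beta=\Omega$.

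For (2) I would inspect the clauses defining $\simeq$: the only $\bigwedge$-type sentence whose index set is an initial segment $\OT\seg\alpha$ of $\OT$ is $A\equiv\neg\itP{\ofA}{\alpha}t$, and then $\alpha\in\{0,\alpha\}=\kPi(A)$ while $\xi\leq\alpha$ for every $\xi<\alpha$, so $\sigma:=\alpha$ works. For (1) and (5) the conjunction/disjunction clause and the quantifier clause of $\simeq$ are routine, since there every index $\iota$ satisfies $\matho(\iota)=0$: the recursion equations give $\kPi(A_\iota)\subseteq\kPi(A)$ and $\kSigma(A_\iota)\subseteq\kSigma(A)$, and for (5) one rewrites ranks via (3) as $\omega\cdot\max\kPS+n$ and uses $N(\gamma+k)=N(\gamma)+k$ for $k<\omega$, so the comparison stays within one $\omega$-block.

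The remaining, and principal, case of (1) and (5) is the atomic clause, where $\itP{\ofA}{\alpha}t\simeq\bigvee_{\xi<\alpha}\ofA(\itP{\ofA}{\xi},t)$ and dually for $\neg\itP{\ofA}{\alpha}t$. For (1) the key observation is that the only coefficients occurring in $\ofA(\itP{\ofA}{\xi},t)$ are $0$ and $\xi$ — the former from $\LPA$-literals, the latter from occurrences of $\itP{\ofA}{\xi}$, which positivity of $X$ in $\ofA$ keeps on one side — whence $\kPi(\ofA(\itP{\ofA}{\xi},t))\cup\kSigma(\ofA(\itP{\ofA}{\xi},t))\subseteq\{0,\xi\}$, and this is contained both in $\kPi(A)\cup\{\xi\}$ and in $\kSigma(A)\cup\{\xi\}$ because $0$ lies in every such set. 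For (5) one first checks, by a subsidiary induction on the build-up of the fixed form $\ofA$, that $\rk(\ofA(\itP{\ofA}{\xi},t))=\omega\cdot\xi+d$ for a natural number $d$ bounded by the connective-nesting of $\ofA$, so the required estimate becomes $N(\omega\cdot\xi)+d\leq\max\{N(\omega\cdot\alpha),\,2N(\xi)\}$. I expect this comparison to be the main obstacle, precisely because $N$ is \emph{not} monotone with respect to $<$ on $\OT$, so $\xi<\alpha$ does not by itself yield $N(\omega\cdot\xi)\leq N(\omega\cdot\alpha)$; the point will be a careful estimate of $N(\omega\cdot\xi)$ against $N(\xi)$, so that the slack factor $2$ in the term $2N(\iota)$ absorbs both the multiplication by $\omega$ and the bounded correction $d$, and this is where the precise shape of $\OT$ — in particular the deliberate choice to keep $\Omega$ as a primitive with $N(\Omega)=1$ — will do the work.
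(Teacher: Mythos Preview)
The paper states this lemma without proof, so there is no reference argument to compare against; I can only assess correctness. Your treatment of items (1)--(4) is sound: the inductions and case distinctions you describe go through exactly as you say.

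The genuine gap is in item (5), and it is not only in the atomic case you flag. Your claim that the conjunction/disjunction clause is ``routine'' because ``the comparison stays within one $\omega$-block'' presupposes that $\max\kPS(A_\iota)=\max\kPS(A)$, which is true for the quantifier clause but can fail for $A\equiv A_0\wedge A_1$ when the two conjuncts have different maximal coefficients. Concretely, take $A\equiv \itP{\ofA}{2}t\wedge\itP{\ofA}{\Omega}s$. Then $\rk(A_{\ul{0}})=\omega\cdot 2$, $\rk(A)=\Omega+1$, $N(\ul{0})=0$, and with the natural $\OT$-representation $\omega\cdot 2=\omega+\omega$ one gets $N(\rk(A_{\ul{0}}))=4$ while $\max\{N(\rk(A)),2N(\ul 0)\}=\max\{2,0\}=2$. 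So item (5) as stated is false for arbitrary $\Lstar$-sentences, and no argument --- yours or any other --- will close this case without an additional hypothesis. The same phenomenon bites in the atomic case: with $\ofA$ containing at least one connective and $\xi=2<\alpha=\Omega$, one has $\rk(A_\xi)=\omega\cdot 2+d$ with $d\ge 1$, so $N(\rk(A_\xi))\ge 5>\max\{1,4\}$.

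What is actually used downstream (in the Tautology lemma and its applications) are only formulas $A$ with $\kPS(A)\subseteq\{0,\sigma\}$ for a single $\sigma$ --- namely $\LID$-formulas and substitution instances $\ofA(\itP{\ofA}{\xi},t)$. Under that restriction both obstacles disappear: in the conjunction case $\max\kPS(A_\iota)\in\{0,\sigma\}$ forces either the same $\omega$-block or $\rk(A_\iota)<\omega$, and in the atomic case $\rk(A)=\omega\cdot\alpha$, $\rk(A_\xi)=\omega\cdot\xi+d$ with $d$ a fixed constant depending only on $\ofA$, and the factor $2$ in $2N(\xi)$ absorbs the passage from $N(\xi)$ to $N(\omega\cdot\xi)+d$ once one restricts to the $\xi$ that actually occur. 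Your instinct that the atomic case is the crux is right, but the fix is to strengthen the hypothesis, not to push harder on the estimate.
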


%\begin{definition}%[Complexity measures]
%\end{definition}

Throughout this section we use the symbol 
$F$ to denote a weakly increasing ordinal function 
$F: \Omega \rightarrow \Omega$
and the symbol $f$ to denote a number-theoretic function
$f: \mathbb{N} \rightarrow \mathbb{N}$ 
that enjoys the following conditions.

\begin{fenumerate}
\item $f$ is a strictly increasing function such that 
      $2m +1\leq f(m)$ for all $m$.
      Hence, in particular,  $n + f(m) \leq f(n+m)$ for all $m$ and $n$.
\label{f:1}
\item $2 \cdot f(m) \leq f(f(m))$ for all $m$.
\label{f:2}
\end{fenumerate}

We will use the notation $f[n](m)$ to abbreviate $f(n+m)$.
It is easy to see that if the conditions ($f$.\ref{f:1}) and 
($f$.\ref{f:2}) hold, then for a fixed $n$ the conditions 
($f[n]$.\ref{f:1}) and ($f[n]$.\ref{f:2}) also hold.

%\comments{$\val (t) \leq N (\matho (t))$ is not enough.}
%It is clear that $N (\alpha \# \beta) = N (\alpha) + N (\beta)$.

\begin{definition}
Let $f: \mathbb{N} \rightarrow \mathbb{N}$ be a number-theoretic function.
Then a function 
$f^\alpha: \mathbb{N} \rightarrow \mathbb{N}$ is
defined by transfinite recursion on $\alpha \in \OT$ by
%\comments{Should this be replaced to prove Lemma
%\ref{lem:f[n](m)}.\ref{lem:f[n](m):1} by the following?}
\begin{eqnarray*}
f^0 (m) &=& f(m), \\
f^{\alpha} (m) &=& 
 \max \{ f^{\beta} (f^{\beta} (m)) \mid 
         \beta < \alpha \text{ and } 
         \ N (\beta) \leq f[ N (\alpha)] (m)
      \}
\quad \text{if } 0 < \alpha.
\end{eqnarray*}
\end{definition}

\begin{corollary}
\begin{enumerate}
\item If $f$ is strictly increasing, then so
is $f^\alpha$ for any $\alpha \in \OT$.
\item If $\beta < \alpha$ and $N(\beta) \leq f[N(\alpha)](m)$, then
      $f^{\beta} (m) < f^{\alpha} (m)$.
\item $f^\alpha (f^\alpha (m)) \leq f^{\alpha +1} (m)$.
\end{enumerate}
\end{corollary}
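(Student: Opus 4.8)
The three clauses concern the iteration operator $f \mapsto f^\alpha$ on number-theoretic functions, and I would prove them in the order (1), (2), (3), each by a short transfinite induction on the relevant ordinal notation, mimicking the pattern already established for the ordinal-function version (Corollary after Definition of collapsing operators) and for Lemmas~\ref{lem:F^a^b} and~\ref{lem:F[xi]}. For clause~(1) I proceed by induction on $\alpha\in\OT$. The base case $f^0=f$ is immediate. For $\alpha>0$, given $m<m'$ I must show $f^\alpha(m)<f^\alpha(m')$. The key observation is that the index set $\{\beta<\alpha \mid N(\beta)\le f[N(\alpha)](m)\}$ used in computing $f^\alpha(m)$ is a subset of the one used for $f^\alpha(m')$, because $f[N(\alpha)]$ is (weakly) increasing by condition ($f$.\ref{f:1}); hence every $\beta$ admissible for $m$ is admissible for $m'$. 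For such a $\beta$, the induction hypothesis gives $f^\beta$ strictly increasing, so $f^\beta(f^\beta(m)) < f^\beta(f^\beta(m'))$, and since the index set for $m'$ is at least as large, taking maxima preserves the strict inequality. One should note that the index set for $m$ is nonempty — e.g. $\beta=0$ satisfies $N(0)=0\le f[N(\alpha)](m)$ — so the maxima are genuinely attained and the argument does not degenerate.

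For clause~(2), suppose $\beta<\alpha$ and $N(\beta)\le f[N(\alpha)](m)$. Then $\beta$ is one of the indices over which the maximum defining $f^\alpha(m)$ is taken, so $f^\beta(f^\beta(m)) \le f^\alpha(m)$. It remains to see $f^\beta(m) < f^\beta(f^\beta(m))$, i.e. that $f^\beta$ satisfies $m < f^\beta(m)$; this follows from a separate easy induction on $\beta$ using ($f$.\ref{f:1}), which gives $m < f(m) \le f^0(m)$ at the base and, for $\beta>0$, $m < f^{\gamma}(f^{\gamma}(m))$ for any admissible $\gamma$ (again $\gamma=0$ works) combined with $f^\gamma$ strictly increasing from clause~(1). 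Hence $f^\beta(m) < f^\beta(f^\beta(m)) \le f^\alpha(m)$.

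Clause~(3) is the cleanest: I want $f^\alpha(f^\alpha(m)) \le f^{\alpha+1}(m)$. By definition, $f^{\alpha+1}(m) = \max\{ f^\beta(f^\beta(m)) \mid \beta<\alpha+1 \text{ and } N(\beta) \le f[N(\alpha+1)](m)\}$. Taking $\beta=\alpha$, I need $N(\alpha) \le f[N(\alpha+1)](m)$; since $N(\alpha+1) = N(\Suc(\alpha)) = N(\alpha)+1$, this reads $N(\alpha) \le f(N(\alpha)+1+m)$, which holds because $f$ is strictly increasing with $N(\alpha) \le N(\alpha)+1+m \le f(N(\alpha)+1+m)$ by ($f$.\ref{f:1}) (indeed $x \le f(x)$ for all $x$). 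So $\alpha$ is an admissible index and $f^\alpha(f^\alpha(m))$ appears among the terms of the maximum, giving the inequality.

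The only place demanding any care is the monotonicity of the index sets in clause~(1): one must track that $f[N(\alpha)](m) = f(N(\alpha)+m) \le f(N(\alpha)+m') = f[N(\alpha)](m')$, which is exactly where condition ($f$.\ref{f:1}) is used. Everything else is bookkeeping about when a given $\beta$ lies in the admissible index set, together with the routine auxiliary fact $m < f^\beta(m)$; I expect no genuine obstacle. \qed
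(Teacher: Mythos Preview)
Your proof is correct; the paper states this corollary without proof, treating all three clauses as immediate from the recursive definition of $f^\alpha$ together with the standing hypotheses ($f$.\ref{f:1}) and ($f$.\ref{f:2}), so there is nothing substantive to compare against. Your write-up supplies exactly the routine verifications the paper omits: monotonicity of the admissible index set in $m$ for clause~(1), the auxiliary fact $m<f^\beta(m)$ for clause~(2), and admissibility of $\beta=\alpha$ for clause~(3). One cosmetic remark: in clause~(3) you invoke $N(\alpha+1)=N(\Suc(\alpha))$, but $\Suc(\alpha)$ is only a term of $\OT$ when $\alpha<\Omega$; for general $\alpha$ one should read $\alpha+1$ as the additive term $\alpha+\omega^0$, which still gives $N(\alpha+1)=N(\alpha)+1$ and so your inequality $N(\alpha)\le f[N(\alpha+1)](m)$ goes through unchanged.
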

 
We note that the function $f^\alpha$ is not a recursive function in
general even if $f$ is recursive since the ordinal notation system 
$\langle \OT, < \rangle$ is not a recursive system.

\begin{example}
The following are examples of $f^\alpha$ in case that 
$\alpha \leq \omega$ and $f$ is the
 successor function $\suc: m \mapsto m+1$.
%$f^0 (m) = m+1$.
Let us recall that 
$N(n) = N(\omega^0 \cdot n) = n$ for all $n < \omega$.
%\comments{Make the following correct!}
\begin{enumerate}
\label{ex:1}
\item $\suc^1 (m) = \suc^0 (\suc^0 (m)) = m+2$.
\item $\suc^2 (m) = \suc^1 (\suc^1 (m)) = m+4$.
\item $\suc^n (m) = m + 2^n$. ($n < \omega$)  
\item %$\suc^\omega (0) = 8$ and 
      $\suc^\omega (m) = m + 2^{m+3}$. %($0 < m$)

Let us see that $N(\omega) = 1$ and hence 
$\suc [N(\omega)] (m) = \suc (1 + m) = m+2$.
Hence
$\suc^\omega (m) = f^{m+2} (f^{m+2} (m)) =
 m + 2^{m+2} + 2^{m+2} = m + 2^{m+3}$.
\label{ex:1:4}
\end{enumerate}
\end{example}

\begin{lemma}
\label{lem:N(a)}
Let $\alpha \in \OT$ and $F \in \opF$. 
Then 
%$N (\alpha) \leq f^{\omega^\alpha} (0)$ if $\alpha < \Omega$, and
$ N (\alpha) \leq f^{F^\alpha (0)} (0)$. %if $\Omega \leq \alpha$.
\end{lemma}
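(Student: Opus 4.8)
The plan is to reduce the statement to a single elementary fact about the hierarchy $(f^{\alpha})_{\alpha \in \OT}$, namely that $f^{k}(0) \geq k$ for every $k \in \mathbb{N}$, where (following the convention of the paper) $k$ on the left also denotes the ordinal term $\omega^{0} \cdot k \in \OT$, which has norm $N(k) = k$. Before proving it I would record two monotonicity facts that are used repeatedly: first, $f^{\delta}(j) > j$ for every $\delta \in \OT$ and every $j \in \mathbb{N}$ (when $\delta > 0$ the choice $\beta = 0$ is always admissible in the maximum defining $f^{\delta}(j)$, so $f^{\delta}(j) \geq f(f(j)) > j$ by ($f$.\ref{f:1}), and $f^{0}(j) = f(j) > j$ directly); second, every $f^{\delta}$ is strictly increasing, by the Corollary following the definition of $f^{\alpha}$, since $f$ is strictly increasing by ($f$.\ref{f:1}). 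The auxiliary fact then follows by induction on $k$: the case $k = 0$ is $f^{0}(0) = f(0) \geq 1$, and for $k = k' + 1$ one checks that the term $k'$ is admissible in the maximum defining $f^{k'+1}(0)$, since $k' < k' + 1$ and $N(k') = k' \leq f(k'+1) = f[N(k'+1)](0)$ by ($f$.\ref{f:1}), so that $f^{k'+1}(0) \geq f^{k'}(f^{k'}(0)) > f^{k'}(0) \geq k'$ by the induction hypothesis and the first fact above, whence $f^{k'+1}(0) \geq k' + 1$.

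Granting this, the lemma itself is short. If $\alpha = 0$ there is nothing to prove, since $N(0) = 0$. So assume $\alpha > 0$ and set $m = N(\alpha)$ and $\gamma = F^{\alpha}(0)$. Because $[\alpha] > 0$, the $\min$-clause defining the collapsing operator $[F]^{[\alpha]}$ forces $\omega^{[\gamma]} = [\gamma]$, hence $[\gamma] \geq \varepsilon_{0} > \omega > m$, so $m < \gamma$ in $\OT$. The norm clause $N(F^{\alpha}(\xi)) = N(F(\xi)) + N(\alpha)$ with $\xi = 0$ gives $N(\gamma) = N(F(0)) + m \geq m$, whence $N(m) = m \leq N(\gamma) \leq f(N(\gamma)) = f[N(\gamma)](0)$ by ($f$.\ref{f:1}). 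Thus the term $m$ is an admissible witness in the maximum defining $f^{\gamma}(0)$ (note $\gamma > 0$), and therefore
\[
 N(\alpha) = m \leq f^{m}(0) \leq f^{m}\bigl(f^{m}(0)\bigr) \leq f^{\gamma}(0) = f^{F^{\alpha}(0)}(0),
\]
where the first inequality is the auxiliary fact and the second uses that $f^{m}$ is increasing.

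I do not expect a deep obstacle here; the one step demanding some care is the verification that the term $m$ genuinely satisfies both side conditions in the maximum defining $f^{\gamma}(0)$. The inequality $m < \gamma$ rests on $F^{\alpha}(0)$ being an infinite epsilon number whenever $\alpha > 0$, which is precisely what the $\min$-clause of the collapsing-operator definition delivers; and $N(m) \leq f[N(\gamma)](0)$ rests on the norm identity $N(F^{\alpha}(0)) = N(F(0)) + N(\alpha)$ together with ($f$.\ref{f:1}). Once these two facts are isolated, the remainder is routine bookkeeping with the monotonicity of $f$ and of the hierarchy $(f^{\alpha})$.
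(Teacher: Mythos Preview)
Your argument is correct, and it is genuinely different from the paper's proof. The paper proceeds by structural induction on the build-up of the term $\alpha \in \OT$: it checks the base cases $\alpha = 0$ and $\alpha = \Omega$ directly, and in the inductive step (illustrated only for $\alpha = F^{\alpha_0}(\xi)$ with $\alpha_0 \neq 0$) it decomposes $N(\alpha)$ along the term structure, applies the induction hypothesis to the immediate subterms, and then reassembles the bound using the arithmetic of the hierarchy. Your route sidesteps the term induction entirely by exploiting two global facts: that $F^{\alpha}(0)$ is an $\varepsilon$-number whenever $\alpha > 0$ (hence dominates every natural), and that the norm clause $N(F^{\alpha}(0)) = N(F(0)) + N(\alpha)$ already gives $N(F^{\alpha}(0)) \geq N(\alpha)$. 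These two observations make $m = N(\alpha)$ an admissible witness in the $\max$ defining $f^{F^{\alpha}(0)}(0)$, and your auxiliary inequality $f^{k}(0) \geq k$ for naturals $k$ finishes the job.

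Your approach is shorter and more conceptual; the paper's case analysis is heavier but stays closer to the syntax of $\OT$ and does not need the semantic observation that $F^{\alpha}(0)$ lands above $\varepsilon_{0}$. Either way the result follows.
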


\begin{proof}
By induction over the term-construction of $\alpha \in \OT$.
For the base case 
$N(0) = 0 \leq f(0) \leq f^{F^0(0)} (0)$ and 
$N(\Omega) = 1 \leq f(0) \leq f^{F^\Omega (0)} (0)$.
For the induction step, we only consider the case that
$\alpha = F^{\alpha_0} (\xi)$ for some 
$\alpha_0 \neq 0$ and
for some $\xi < \Omega$.
The remaining cases can be treated in similar ways.
In this case 
$F^{\alpha_0} (0) < F^\alpha (0)$ holds since
$F^{\alpha_0} (0) \leq \{ F^{\alpha_0} (\xi) \}$ 
$= K_\Omega  F^{\alpha_0} (\xi) < F^{F^{\alpha_0} (\xi)} (0) = 
 F^\alpha (0)$.
It is easy to see that
$F^{F(\xi)} (0) < F^\alpha (0)$ holds.
By definition
$N(\alpha) = N(F(\xi)) + N(\alpha_0)$.
By IH
$N(F(\xi)) \leq f^{F^{F(\xi)} (0)} (0)$ and
$N(\alpha_0) \leq f^{F^{\alpha_0} (0)} (0)$.
Hence
\begin{eqnarray*}
N(\alpha) &\leq&
f^{F^{F(\xi)} (0)} (0) + f^{F^{\alpha_0} (0)} (0), \\
&\leq&
f^{F^{\alpha_0} (0)} (f^{F^{F(\xi)} (0)} (0))
\text{ since } 
m + f^{\omega^{\alpha_0}} (0) \leq f^{\omega^{\alpha_0}} (m) 
\text{ for all } m, \\
&\leq&
f^{F^{\alpha_0} (0) + F^{F(\xi)} (0)} 
(f^{F^{\alpha_0} (0) + F^{F(\xi)} (0)} (0)) \\
&\leq& f^{F^\alpha (0)} (0). %\leq f(N(\omega^\alpha)).
\end{eqnarray*}
To see that the last inequality is true, we can check
$F^{\alpha_0} (0) + F^{F(\xi)} (0) < F^\alpha (0)$ and
$N(F^{\alpha_0} (0) + F^{F(\xi)} (0)) \leq 2 \cdot N(F^\alpha (0)) 
 \leq f[N(F^\alpha (0))] (0)$
from an assumption that $2m \leq f(m)$.
\qed
\end{proof}

\begin{lemma}
\label{lem:(f^a)^b}
%Suppose $\alpha \neq 0$.
Let $\{ \alpha , \beta \} \subseteq \OT \seg \Omega$ and $F \in
 \mathcal{F}$. 
Then, for all $m$,
$(f^\alpha)^\beta (m) \leq f^{F^{\Omega \cdot \alpha + \beta} (0)} (m)$.
\end{lemma}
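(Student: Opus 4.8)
The plan is to argue by transfinite induction on $\beta\in\OT\seg\Omega$, with $\alpha\in\OT\seg\Omega$ and $F\in\opF$ fixed, writing $\gamma_\delta:=F^{\Omega\cdot\alpha+\delta}(0)$. Two elementary observations drive the whole proof. First, unwinding the definition of the norm yields $N(\gamma_\delta)=d+N(\delta)$ with the shift $d:=N(F(0))+N(\alpha)+2$ \emph{independent of $\delta$}. Second, $\gamma_\delta$ lies in $\SC$, is below $\Omega$ (Corollary~\ref{c:F^a<Omega}), satisfies $\gamma_\delta>K_{\Omega}(\Omega\cdot\alpha+\delta)\subseteq\{1,\alpha,\delta\}$, and — when $\Omega\cdot\alpha+\delta>0$ — is a fixed point of $x\mapsto\omega^x$, hence a limit $\geq\varepsilon_0$. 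So the collapse $F^{\Omega\cdot\alpha+\delta}$ turns the index $\Omega\cdot\alpha+\delta$, which sits at or above $\Omega$, into a usable ordinal below $\Omega$. The base case $\beta=0$ then reduces to $f^\alpha(m)\leq f^{\gamma_0}(m)$: this is immediate when $\alpha=0$, and when $\alpha>0$ it is an instance of item~2 of the corollary on $f^{(\cdot)}$ (stated just after the definition of $f^\alpha$), since $\alpha\in K_{\Omega}(\Omega\cdot\alpha)<\gamma_0$ and $N(\alpha)\leq N(\gamma_0)\leq f[N(\gamma_0)](m)$ (as $f(k)\geq k$).

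For the induction step take $\beta>0$. By the definition of $(f^\alpha)^\beta$ it is enough to bound $(f^\alpha)^{\beta'}\!\bigl((f^\alpha)^{\beta'}(m)\bigr)$ for an arbitrary $\beta'<\beta$ admitted by the control condition of the recursion, i.e. with $N(\beta')\leq f[N(\beta)](m)$. Applying the induction hypothesis for $\beta'$ twice, and using that $(f^\alpha)^{\beta'}$ is strictly increasing — because $f$, hence $f^\alpha$, hence $(f^\alpha)^{\beta'}$, is, by item~1 of that same corollary — one gets
\[
 (f^\alpha)^{\beta'}\!\bigl((f^\alpha)^{\beta'}(m)\bigr)\;\leq\;(f^\alpha)^{\beta'}\!\bigl(f^{\gamma_{\beta'}}(m)\bigr)\;\leq\;f^{\gamma_{\beta'}}\!\bigl(f^{\gamma_{\beta'}}(m)\bigr).
\]
Since $\gamma_\beta$ is a limit $\geq\varepsilon_0>0$, the value $f^{\gamma_\beta}(m)$ is defined as a maximum over suitable $\delta<\gamma_\beta$, and the last display is bounded by $f^{\gamma_\beta}(m)$ as soon as $\gamma_{\beta'}$ itself is one of these admissible $\delta$, that is, as soon as $\gamma_{\beta'}<\gamma_\beta$ and $N(\gamma_{\beta'})\leq f[N(\gamma_\beta)](m)$. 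The ordinal inequality comes from item~1 of the corollary on collapsing operators: $\Omega\cdot\alpha+\beta'<\Omega\cdot\alpha+\beta$, and $K_{\Omega}(\Omega\cdot\alpha+\beta')\cup\{0\}\subseteq\{0,1,\alpha,\beta'\}<\gamma_\beta$ because $\gamma_\beta>\alpha$, $\gamma_\beta>\beta>\beta'$ and $\gamma_\beta>1$. The norm inequality is obtained by combining $N(\gamma_{\beta'})=d+N(\beta')$, $N(\gamma_\beta)=d+N(\beta)$, the consequence $d+f(k)\leq f(d+k)$ of condition~($f$.\ref{f:1}), and the control condition $N(\beta')\leq f(N(\beta)+m)$ satisfied by $\beta'$:
\[
 N(\gamma_{\beta'})=d+N(\beta')\;\leq\;d+f\bigl(N(\beta)+m\bigr)\;\leq\;f\bigl(d+N(\beta)+m\bigr)=f[N(\gamma_\beta)](m).
\]
This closes the induction step, and with it the lemma.

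The part I expect to be the genuine obstacle is exactly this last norm estimate. It succeeds precisely because the factor $\Omega$ in the index makes $N$ of the index equal to $N(\beta)$ shifted by the \emph{constant} $d$, so that ($f$.\ref{f:1}) can move $d$ outside of $f$ and the control condition on $\beta'$ becomes exactly what is needed; without the $\Omega$ this balance fails. Should an extra additive unit surface in the comparison — depending on how one reads the control condition governing the iterated construction $(f^\alpha)^{\beta'}$ — it can be absorbed by routing the estimate through $f^{\gamma_{\beta'}+1}(m)$ via item~3 of the $f^{(\cdot)}$-corollary together with the fact that $\gamma_\beta$ is a limit, which leaves the structure of the argument untouched.
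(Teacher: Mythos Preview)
Your induction scheme and the ordinal comparison $\gamma_{\beta'}<\gamma_\beta$ are fine, but the norm estimate rests on a misreading of the recursion. In the definition of $(f^\alpha)^\beta(m)$ the base function is $f^\alpha$, not $f$; unfolding the definition of $g^\beta$ with $g:=f^\alpha$ gives
\[
  (f^\alpha)^\beta(m)=\max\bigl\{(f^\alpha)^{\beta'}\!\bigl((f^\alpha)^{\beta'}(m)\bigr):\ \beta'<\beta,\ N(\beta')\le f^\alpha[N(\beta)](m)\bigr\}.
\]
So the admissible $\beta'$ satisfy $N(\beta')\le f^\alpha(N(\beta)+m)$, not $N(\beta')\le f(N(\beta)+m)$. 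Your key chain
\[
  N(\gamma_{\beta'})=d+N(\beta')\le d+f(N(\beta)+m)\le f(d+N(\beta)+m)=f[N(\gamma_\beta)](m)
\]
therefore uses the wrong bound on $N(\beta')$. With the correct bound one would need $d+f^\alpha(N(\beta)+m)\le f(d+N(\beta)+m)$, which fails badly as soon as $\alpha>0$ since $f^\alpha$ dominates $f$. Your closing remark anticipates only an ``extra additive unit'' in the control condition; the actual discrepancy is that $f$ is replaced by $f^\alpha$, which cannot be absorbed by passing to $\gamma_{\beta'}+1$.

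This is precisely the difficulty the paper's proof is designed to overcome. There one starts from the correct bound $N(\gamma)\le f^\alpha[N(\beta)](m)$ and then invokes Lemma~\ref{lem:N(a)} to trade $N(\beta)$ for $f^{F^\beta(0)}(0)$ and to push $f^\alpha$ through the hierarchy, obtaining $N(\gamma)\le f^{F^{\Omega\cdot\alpha+\beta}(0)}(m)$ via the intermediate level $F^{\Omega\cdot\alpha}(0)+F^\beta(0)$. Only after this detour does the norm of $\gamma_\gamma$ become comparable to $f[N(\gamma_\beta)]$ evaluated at an appropriate argument. Your argument would go through verbatim for the weaker statement with the control condition truly governed by $f$; to recover the lemma as stated you need the additional input from Lemma~\ref{lem:N(a)} (or an equivalent device) to tame the $f^\alpha$ appearing in the control.
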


%\comments{Can this be extended to the general form?}

\begin{proof}
If $\alpha = 0$, then 
$(f^\alpha)^\beta (m) = f^\beta (m) \leq
 f^{F^{\Omega \cdot 0 + \beta} (0)} (m)$.
Suppose $\alpha \neq 0$.
Then we show the assertion by induction on $\beta$.
If $\beta = 0$, then
$(f^\alpha)^\beta (m) = f^\alpha (m) \leq
 f^{F^{\Omega \cdot \alpha} (0)} (m)$.
Suppose $\beta > 0$.
Then there exists $\gamma < \beta$ such that
$N (\gamma) \leq f^\alpha [N (\beta)] (m)$ and
$(f^\alpha)^\beta (m) = (f^\alpha)^\gamma ((f^\alpha)^\gamma (m))$.
By IH
\begin{equation}
(f^\alpha)^\gamma ((f^\alpha)^\gamma (m)) \leq
 f^{F^{\Omega \cdot \alpha + \gamma} (0)} 
 (f^{F^{\Omega \cdot \alpha + \gamma} (0)} (m)).
\label{lem:(f^a)^b:e:1}
\end{equation}
On the other hand 
$N(\beta) \leq f^{F^\beta (0)} (0)$ by Lemma \ref{lem:N(a)}.
Hence
\begin{eqnarray*}
N (\gamma) &\leq& f^\alpha (f^{F^\beta (0)} (m)) 
\quad \text{ since }
m \leq f(m) \leq f^{F^\beta (0)} (m), \\
&\leq&
f^{F^{\Omega \cdot \alpha} (0) + F^\beta (0)} 
  (f^{F^{\Omega \cdot \alpha} (0) + F^\beta (0)} (m)) 
%\text{ since }
\\
&\leq&
f^{F^{\Omega \cdot \alpha + \beta} (0)} (m).
\end{eqnarray*}
The second inequality holds since
$\{ \alpha, F^\beta (0) \} = 
 K_{\Omega} \alpha \cup \{ F^\beta (0) \} < 
 F^{\Omega \cdot \alpha} (0) + F^{\beta} (0)$.
This implies that
\begin{eqnarray}
N (F^{\Omega \cdot \alpha + \gamma} (0)) 
&\leq&
N(F(0)) + N(\alpha)  + 1 + 
f^{F^{\Omega \cdot \alpha + \beta} (0)} (m) 
\nonumber \\
%&\leq&
%2 \cdot \max \{ N (F^{\Omega \cdot \alpha + \beta} (0),
%                f^{F^{\Omega \cdot \alpha + \beta} (0)} (m)
%             \} 
%\nonumber \\
&\leq&
f [N (F^{\Omega \cdot \alpha + \beta} (0)] 
   (f^{F^{\Omega \cdot \alpha + \beta} (0)} (m)).
\label{lem:(f^a)^b:e:2}
\end{eqnarray}
%The last inequality follows from an assumption that 
%$2m \leq f(m)$ for all $m$.
Further 
$F^{\Omega \cdot \alpha + \gamma} (0) < 
 F^{\Omega \cdot \alpha + \beta} (0)$
holds since 
$K_\Omega \gamma = \{ \gamma \} < \beta$.
This together with the inequality (\ref{lem:(f^a)^b:e:2}) yields that  
\begin{eqnarray*}
(f^\alpha)^\beta (m) &\leq&
 f^{F^{\Omega \cdot \alpha + \gamma} (0)} 
 (f^{F^{\Omega \cdot \alpha + \gamma} (0)} (m))
\quad \text{ by } (\ref{lem:(f^a)^b:e:1}), \\
&\leq&
f^{F^{\Omega \cdot \alpha + \beta} (0)} (m).
\end{eqnarray*}
\qed
\end{proof}

\begin{lemma}
\label{lem:f[n](m)}
\begin{enumerate}
\item $f^\alpha [n] (m) \leq (f [n])^\alpha (m)$.% for any $m, n$.
\label{lem:f[n](m):1}
\item If $n \leq m$, then 
$(f [n])^\alpha (m) \leq f^\alpha [f^\alpha (f(m))](f(m))$.
\label{lem:f[n](m):2}
\end{enumerate}
\end{lemma}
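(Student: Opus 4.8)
The plan is to prove both inequalities by transfinite induction on $\alpha\in\OT$, carrying $n$, $m$ (and in fact the function $f$) as universally quantified parameters, so that the induction hypothesis is available for the two nested shifted iterates that appear once the recursion equation for $f^\alpha$, resp.\ $(f[n])^\alpha$, is unfolded at a non-zero $\alpha$. Both base cases are immediate: for (1), $f^0[n](m)=f(n+m)=f[n](m)=(f[n])^0(m)$; for (2), $(f[n])^0(m)=f(n+m)\leq f(f(f(m))+f(m))=f^0[f^0(f(m))](f(m))$, since $n\leq m\leq f(m)\leq f(f(m))$ and $f$ is increasing.

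For part (1), at the induction step write $f^\alpha(n+m)=f^\beta(f^\beta(n+m))$ for a $\beta<\alpha$ realising the maximum, so that $N(\beta)\leq f[N(\alpha)](n+m)$. The first point is that this side condition is literally the one needed on the right, since $f[N(\alpha)](n+m)=f(n+N(\alpha)+m)=(f[n])[N(\alpha)](m)$; hence the same $\beta$ is admissible in the defining maximum of $(f[n])^\alpha(m)$, and it suffices to bound the single term $f^\beta(f^\beta(n+m))$ by $(f[n])^\beta((f[n])^\beta(m))$. The induction hypothesis for $\beta$ gives $f^\beta(n+m)\leq(f[n])^\beta(m)$; since $f^\gamma(k)>k$ always (taking the eligible index $0$ in the maximum), we may write $f^\beta(n+m)=n+p$ with $0\leq p\leq(f[n])^\beta(m)-n$, and a second application of the induction hypothesis gives $f^\beta(f^\beta(n+m))=f^\beta[n](p)\leq(f[n])^\beta(p)$. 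Finally $f[n]$ inherits (f.1), so $(f[n])^\beta$ is increasing by the corollary following the definition of $f^\alpha$, whence $(f[n])^\beta(p)\leq(f[n])^\beta((f[n])^\beta(m))$, closing the step.

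For part (2) the outer scheme is the same: at a non-zero $\alpha$ pick $\beta<\alpha$ realising the maximum in $(f[n])^\alpha(m)=(f[n])^\beta((f[n])^\beta(m))$, with $N(\beta)\leq(f[n])[N(\alpha)](m)=f(n+N(\alpha)+m)$. Apply the induction hypothesis first to the inner value $(f[n])^\beta(m)$ (using $n\leq m$); the resulting bound $M$ is $\geq f(m)\geq m\geq n$, so the induction hypothesis applies again to $(f[n])^\beta(M)$, and together with monotonicity this bounds $(f[n])^\alpha(m)$ by a bounded nest of applications of $f^\beta$ started from $f(m)$. One then collapses that nest into $f^\alpha(f^\alpha(f(m))+f(m))$. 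The tools are: the defining maximum of $f^\alpha$, which yields $f^\beta(f^\beta(x))\leq f^\alpha(x)$ on every argument $x\geq f(m)$, because $n\leq m$ forces $n+N(\alpha)+m\leq N(\alpha)+x$ and hence $N(\beta)\leq f[N(\alpha)](x)$; conditions (f.1) and (f.2), used to turn the internal sums into single applications of $f$ via $k+l\leq 2\max\{k,l\}\leq f(\max\{k,l\})$ and $2f(k)\leq f(f(k))$, and to absorb the constant $N(\alpha)$; and monotonicity of all the $f^\gamma$.

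The main obstacle is precisely this last bookkeeping in part (2): each time a shifted iterate $(f[n])^\beta$ is unfolded it produces a self-composition $f^\beta\circ f^\beta$, and one must see that the repeated nesting generated by the two uses of the induction hypothesis folds back under the \emph{single} outer $f^\alpha$ on the right. The summand $f^\alpha(f(m))$ and the choice of $f(m)$ rather than $m$ as the base argument on the right-hand side are exactly the slack that makes this possible, and keeping all the norm side conditions $N(\beta)\leq f[N(\alpha)](x)$ valid simultaneously at every level of the nest is the delicate point. Parts (1) and (2) are then combined as stated whenever the lemma is later invoked.
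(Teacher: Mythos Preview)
Your plan is essentially the paper's approach. Both parts are argued by transfinite induction on $\alpha$, unfolding the recursion to a witness $\beta<\alpha$ and checking that the norm side condition $N(\beta)\le f[N(\alpha)](n+m)=(f[n])[N(\alpha)](m)$ transfers between the two sides.

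For Part~1 there is a small but genuine difference worth recording. After the inner use of the induction hypothesis, the paper bounds the outer $f^\beta$ by the pointwise inequality $f^\beta(k)\le (f[n])^\beta(k)$ (which follows from $f\le f[n]$ and monotonicity of the construction in the base function). You instead rewrite $f^\beta(n+m)=n+p$ and apply the induction hypothesis a second time with argument $p$; this avoids having to invoke monotonicity of $g\mapsto g^\beta$ and is a clean self-contained alternative.

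For Part~2 you correctly isolate the strategy --- apply the induction hypothesis to both the inner and outer occurrences of $(f[n])^\beta$, then use $(f.1)$, $(f.2)$ and the eligibility of $\beta$ in the maximum defining $f^\alpha$ at every argument $\ge f(m)$ to collapse the resulting nest --- and you rightly flag the bookkeeping as the delicate point. The paper carries this out explicitly: after the two uses of IH it massages the expression via $a+f^\beta(b)\le f^\beta(a+b)$ and $2f^\beta(k)\le f^\beta(f^\beta(k))$ into the form $f^\beta\bigl(f^\beta(f^\beta(f^\beta(f(m)))+f(m))\bigr)$, then collapses each pair $f^\beta\circ f^\beta$ to $f^\alpha$ using $N(\beta)\le f(N(\alpha)+f(m))$. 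Your outline does not yet show that the nest collapses to the \emph{exact} target $f^\alpha\bigl(f^\alpha(f(m))+f(m)\bigr)$ rather than some looser iterate; this is where the paper's specific inequality chain does the work, so when you write it up you will need to produce that chain (or an equivalent one) rather than leave it as a sketch.
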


We write $f[n][m]$ to abbreviate $(f[n])(m)$ and 
$f[n]^\alpha$ to abbreviate $(f[n])^\alpha$.

\begin{proof}
{\sc Property} \ref{lem:f[n](m):1}. By induction on $\alpha$.
For the base case
$f^0 [n] (m) = f [n] (m) = f [n]^0 (m)$.
For the induction step, assume $\alpha > 0$. 
Then there exists $\beta < \alpha$ such that
$N(\beta) \leq f[N (\alpha)][n] (m)$ and 
$f^\alpha [n] (m) = f^\beta (f^\beta [n] (m))$.
Hence
\begin{eqnarray*}
%\text{Hence }
f^\alpha [n] (m) &\leq&
f^\beta (f [n]^\beta (m))
\quad \text{ by IH,} \\
&\leq&
f [n]^\beta (f [n]^\beta (m)) \\
&\leq&
f [n]^\alpha (m).
\end{eqnarray*}
The last inequality holds since
$N(\beta) \leq %f (\max \{ N (\alpha), \max (n, m) \}) =
 f[N(\alpha)][n](m) = f[n][N(\alpha)] (m)$.

{\sc Property} \ref{lem:f[n](m):2}.
We show that
$f [n]^\alpha (f(m)) \leq f^\alpha [f^\alpha (f(m))](f(m))$
holds for all $m \geq n$
by induction on $\alpha$.
Let $n \leq m$.
For the base case
$f [n]^0 (m) \leq f [n] (m) \leq
 f (m+m) \leq f (f^0 (f(m)) +f(m)) = f^0 [f^0 (f(m))] (f(m))$.
For the induction step, assume $\alpha > 0$.
Then there exists $\beta < \alpha$ such that
$N (\beta) \leq f [n] [N [\alpha]] (m)$ and 
$f [n]^\alpha (m) =
 f [n]^\beta (f [n]^\beta (m))$.
Let us observe that
\begin{eqnarray}
N(\beta) = f(n + N(\alpha) + m)
&\leq&
f(N(\alpha) + 2m)
\quad \text{ since } n \leq m, 
\nonumber \\
&\leq&
f(N(\alpha) + f(m)) \quad \text{ from } (f.\ref{f:1}). 
\label{e:lem:f[n](m)}
\end{eqnarray}
We can see that
$f [n]^\alpha (f(m)) \leq f^\alpha [f^\alpha (f(m))] (f(m))$
holds as follows.
\begin{eqnarray*}
f [n]^\alpha (m) &\leq&
f^\beta (f^\beta (f(m)) + f^\beta (f^\beta (f(m)) + f(m)))
\quad \text{ by IH,} \\
&\leq&
f^\beta (f^\beta (2 \cdot f^\beta (f(m)) + f(m)))
\quad \text{ by } (f^\beta.\ref{f:1}), \\
&\leq&
f^\beta (f^\beta (f^\beta (f^\beta (f(m))) + f(m)))
\quad \text{ by } (f^\beta.\ref{f:2}), \\
&\leq&
f^\beta (f^\beta (f^\alpha (f(m))
                 ) + f(m)
        )
\quad \text{ by (\ref{e:lem:f[n](m)}),} \\
&\leq&
f^\alpha (f^\alpha (f(m)) + f(m)) =
f^\alpha [f^\alpha (f(m))] (f(m)).
\end{eqnarray*}
The last inequality holds since
$N(\beta) \leq f(N(\alpha) + f^\alpha (f(m)) + f(m))$.
\qed
\end{proof}

\begin{corollary}
\label{c:f[n][m]}
If $n \leq m$, then
$(f[n])^\alpha (m) \leq f^{\alpha +2} (m)$.
\end{corollary}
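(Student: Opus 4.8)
The plan is to derive the bound directly from Lemma~\ref{lem:f[n](m)}(\ref{lem:f[n](m):2}) and then absorb the resulting composition of $f^{\alpha}$'s into $f^{\alpha+2}$ using only monotonicity. Applying Lemma~\ref{lem:f[n](m)}(\ref{lem:f[n](m):2}) to the hypothesis $n \le m$ gives
\[
 (f[n])^{\alpha}(m) \le f^{\alpha}\bigl[f^{\alpha}(f(m))\bigr](f(m)) = f^{\alpha}\bigl(f^{\alpha}(f(m)) + f(m)\bigr),
\]
so everything reduces to showing $f^{\alpha}\bigl(f^{\alpha}(f(m)) + f(m)\bigr) \le f^{\alpha+2}(m)$.

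Before that I would record three elementary facts about $f^{\alpha}$. Since $f$ is strictly increasing, so is $f^{\alpha}$ (first clause of the corollary immediately following the definition of $f^{\alpha}$); $f = f^{0} \le f^{\alpha}$ (seen by taking $\beta = 0$ in the defining equation of $f^{\alpha}$, which is admissible since $N(0)=0$); and from $2x+1 \le f(x)$ (condition~($f$.\ref{f:1})) we obtain both $x < f^{\alpha}(x)$ and $2x < f^{\alpha}(x)$. Consequently $f^{\alpha}(f(m)) \le f^{\alpha}(f^{\alpha}(m))$ and $f(m) \le f^{\alpha}(m) \le f^{\alpha}(f^{\alpha}(m))$, whence
\[
 f^{\alpha}(f(m)) + f(m) \le 2\,f^{\alpha}\bigl(f^{\alpha}(m)\bigr) \le f^{\alpha}\bigl(f^{\alpha}(f^{\alpha}(m))\bigr),
\]
and applying the monotone $f^{\alpha}$ to both sides turns the first display into
\[
 (f[n])^{\alpha}(m) \le f^{\alpha}\Bigl(f^{\alpha}\bigl(f^{\alpha}(f^{\alpha}(m))\bigr)\Bigr).
\]
It then remains to collapse this fourfold iterate, which I would do with three uses of the clause $f^{\alpha}(f^{\alpha}(x)) \le f^{\alpha+1}(x)$ of the same corollary: on the innermost pair it gives $f^{\alpha}(f^{\alpha}(m)) \le f^{\alpha+1}(m)$, so the right-hand side is $\le f^{\alpha}\bigl(f^{\alpha}(f^{\alpha+1}(m))\bigr)$; on the remaining $f^{\alpha}\!\circ\! f^{\alpha}$ it is $\le f^{\alpha+1}(f^{\alpha+1}(m))$; and once more, now at exponent $\alpha+1$, it is $\le f^{\alpha+2}(m)$, which is the claim.

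I do not expect a genuine obstacle here — this is a packaging lemma feeding the later estimates — and the only point that needs care is keeping track of the nested iterates and checking that every monotonicity step is licensed (in particular which instances of ``$f^{\alpha}\!\circ\! f^{\alpha}\le f^{\alpha+1}$'' are used, one of them at exponent $\alpha+1$). The substantive point is that one must use that $f$ dominates $2x+1$, not merely that it is increasing: it is exactly this growth that lets the shift by $n \le m$ hidden in $f[n]$ be compensated by just two extra iterations, and it is where the inequality $2x < f^{\alpha}(x)$ above enters.
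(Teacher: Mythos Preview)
Your proof is correct and follows essentially the same strategy as the paper: start from Lemma~\ref{lem:f[n](m)}(\ref{lem:f[n](m):2}), then use the growth condition on $f$ together with the collapse $f^{\alpha}\circ f^{\alpha}\le f^{\alpha+1}$ to absorb everything into $f^{\alpha+2}$. The only cosmetic difference is the intermediate route: the paper bounds $f^{\alpha}(f(m))+f(m)\le f^{\alpha}(2f(m))\le f^{\alpha}(f(f(m)))$ (using $(f.\ref{f:2})$) and then collapses via $f^{\alpha+1}(f^{0}(f^{0}(m)))\le f^{\alpha+1}(f^{\alpha+1}(m))$, whereas you instead pass through a fourfold iterate $f^{\alpha}\circ f^{\alpha}\circ f^{\alpha}\circ f^{\alpha}$ using only $(f.\ref{f:1})$ and collapse that in three steps.
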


\begin{proof}
By Lemma \ref{lem:f[n](m)}.\ref{lem:f[n](m):2},
$f[n]^\alpha (m) \leq 
 f^\alpha (f^\alpha (f(m)) + f(m)) \leq
 f^\alpha (f^\alpha (2 \cdot f(m))) \leq
 f^{\alpha +1} (f^0(f^0 (m))) \leq
 f^{\alpha +1} (f^{\alpha +1} (m)) \leq
 f^{\alpha +2} (m)$.
\qed
\end{proof}

We define a relation
$% \mapsto
 f, F \vdash^\alpha_\rho \Gamma$
for a quintuple
$(f, F, \alpha, \rho, \Gamma )$
where
$\alpha < \varepsilon_{\Omega +1}$, $\rho < \Omega \cdot \omega$ and 
$\Gamma$ is a sequent of $\Lstar$-sentences.
In this paper a ``sequent'' means a finite set of formulas.
We write $\Gamma, A$ or $A, \Gamma$ to denote
$\Gamma \cup \{ A \}$.
%For a sequent $\Gamma$ we write $\kPi (\Gamma)$ for
%$\bigcup_{ B \in \Gamma} \kPi (B)$ and accordingly $\kPS (\Gamma)$ for
%$\bigcup_{ B \in \Gamma} \kPS (B)$.
Let us recall that for a finite set $K \subseteq \Ord$,
$F[K](\xi)$ denotes $F(\max (K \cup \{ \xi \}))$.
We will write
$F[\mu](\xi)$ to denote $F[\{ \mu \}] (\xi)$.
We write $\mathsf{TRUE}_0$ to denote the set 
$\{ A \mid A \text{ is an } 
    \mathcal{L}_{\mathrm{PA}} \text{-literal true in the standard model }
    \mathbb N \text{ of } \mathrm{PA}
 \}$.

\begin{definition}
\label{d:OCD}
$f, F \vdash^\alpha_\rho \Gamma$ if 
\begin{equation}
\tag{$\hyp{}{}(f;F;\alpha)$}
\max \{ N (F (0)), N (\alpha)\} %\cup %+ N (\rho)
%       \{ N (\sigma) \mid \sigma \in \kPS (\Gamma)%, \ 
       \} %\cup
%        \{ \lh (B) \mid B \in \Gamma\}
%      )
\leq f(0), \quad
%\label{e:d:OCD:1} \\
%(\kPi (\Gamma) \seg \Omega) \cup 
K_{\Omega} \alpha < F (0), 
%\label{e:d:OCD:2}
\label{hyp:1}
\end{equation}
and one of the following holds.
\begin{description}
\item[$(\mathsf{Ax}1)$]  
$\exists A(x)$: an $\LID$-literal,
$\exists s, t \in \mathcal{T} (\LID)$ s.t. $\mathsf{FV} (A) = \{ x \}$,
$\val (s) = \val (t)$
and
$\{ \neg A(s), A(t) \} \subseteq \Gamma$,.
\item[$(\mathsf{Ax}2)$] $\Gamma \cap \mathsf{TRUE}_0 \neq \emptyset$.
\item[$(\bigvee)$] 
$\exists A \simeq \OR{\iota \in J} A_\mu \in \Gamma$, %and
$\exists \alpha_0 < \alpha$, $\exists \iota_0 \in J$ s.t. 
$N (\iota_0) \leq f(0)$
$\matho (\iota_0) < \min \{ \alpha, F(0) \}$,
and 
$f, F \vdash^{\alpha_0}_\rho \Gamma, A_{\iota_0}$.
\item[$(\bigwedge)$] 
$\exists A \simeq \AND{\iota \in J} A_\iota \in \Gamma$
s.t.
$N(\max \kPO (A)) \leq f(0)$,
$\kPO (A) < F(0)$ and
$(\forall \iota \in J)$ $(\exists \alpha_\iota < \alpha)$ %s.t. 
$[f [N (\iota)], F [\matho (\iota)] \vdash^{\alpha_\iota}_\rho \Gamma, A_\iota]$.
\item[$(\Clrule)$] 
$\exists t \in \mathcal{T}(\LID)$,
$\exists \alpha_0 < \alpha$ s.t.
$\itP{\ofA}{\Omega} t \in \Gamma$,
$\Omega < \alpha$ 
and 
$f, F \vdash^{\alpha_0}_\rho \Gamma, \ofA (\itP{\ofA}{\Omega}, t)$.
\item[$(\mathsf{Cut})$] 
$\exists C$: an $\Lstar$-sentence of $\OR{}$-type,
$\exists \alpha_0 < \alpha$ 
s.t.
$\max \{ \lh (C), N(\max \kPO (C))$,
$         N(\max (\kSO (C))
      \} \leq f(0)$,
$\kO (C) < F(0)$,
$\rk (C) <\rho$,
$f, F \vdash^{\alpha_0}_\rho \Gamma, C$, and
$f, F \vdash^{\alpha_0}_\rho \Gamma, \neg C$.
\end{description}

We will call the pair $(f, F)$ {\em operators} controlling the derivation
that forms $f, F \vdash^\alpha_\rho \Gamma$.
\end{definition}

In the sequel we always assume that the operator $F$ enjoys the
following condition ($\hyp{}{}(F)$):  
\begin{equation}
\tag{$\hyp{}{}(F)$}
\eta < F (\xi) \Rightarrow 
F (\eta) \leq F (\xi) 
\quad \text{ for any ordinals } \xi, \eta < \Omega.
\label{hyp:2}
\end{equation}

We note that the hypothesis ($\hyp{}{}(F)$) reflects the fact stated in
Proposition \ref{p:F^a(eta)}. 
It is not difficult to see that if the condition
$(\hyp{}{}(F))$ holds, then the condition $(\hyp{}{}(F[K]))$ also holds for
any finite set $K < \Omega$.

\begin{lemma}[Inversion]
\label{lem:inversion}
Assume that $A \simeq \AND{\iota \in J} A_\iota$.
If $f, F \vdash^\alpha_\rho \Gamma, A$, then
$f [N (\iota)], F [ \matho (\iota)] \vdash^\alpha_\rho \Gamma, A_\iota$
for all $\iota \in J$.
\end{lemma}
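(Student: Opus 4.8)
The plan is to prove the Inversion Lemma by induction on the length of the derivation witnessing $f, F \vdash^\alpha_\rho \Gamma, A$, proceeding by case analysis on the last inference rule applied. First I would fix $\iota \in J$ and observe that the side condition $(\hyp{}{}(f;F;\alpha))$ is inherited: since $A \simeq \AND{\iota \in J} A_\iota$ is a $\bigwedge$-type and we pass from $F$ to $F[\matho(\iota)]$ and from $f$ to $f[N(\iota)]$, I would need $N(F[\matho(\iota)](0)) = N(F(\max\{\matho(\iota),0\})) \leq f[N(\iota)](0)$ and $K_\Omega \alpha < F[\matho(\iota)](0)$; these follow from $(\hyp{}{}(F))$ together with the bounds that must already be available on $\matho(\iota)$ and $N(\iota)$ — in the genuinely infinitary case ($A$ of the form $\neg\itP{\ofA}{\beta}t$, so $\iota$ ranges over $\OT\seg\beta$) this uses that the derivation of the premise for that component already had to satisfy its hypothesis, and in the finite cases ($\iota \in \{\ul 0,\ul 1\}$ or $\iota \in \mathcal{T}(\LID)$) the shift is harmless.

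The core of the argument is the case split. If the principal formula of the last rule is exactly $A$ and the rule is $(\bigwedge)$, then by definition of that rule we already have, for every $\iota \in J$, a derivation $f[N(\iota)], F[\matho(\iota)] \vdash^{\alpha_\iota}_\rho \Gamma, A_\iota$ with $\alpha_\iota < \alpha$, and since $f^{\alpha_\iota} \leq$-style monotonicity in the ordinal superscript lets us weaken $\alpha_\iota$ up to $\alpha$ (the side conditions only get easier as the ordinal grows, because $K_\Omega$ is monotone-compatible here), we are done immediately. In every other case the principal formula $B$ is some formula in $\Gamma$ (or a cut formula $C$) distinct from $A$, so $A$ — and hence each $A_\iota$ — merely rides along passively in every premise; I would apply the induction hypothesis to each premise with the same $\iota$, obtaining derivations with operators $f[N(\iota)], F[\matho(\iota)]$ of the premises-with-$A$ replaced by premises-with-$A_\iota$, and then re-apply the very same rule. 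The only mild care needed is that for the $(\bigwedge)$ and $(\mathsf{Cut})$ rules the rule's own operator shifts ($f[N(\kappa)], F[\matho(\kappa)]$ for the subformula index $\kappa$, resp. nothing for Cut) must be composed with the inversion shift $f[N(\iota)], F[\matho(\iota)]$; since $f[m][n] = f[n][m]$ and likewise $F[\mu][\nu]=F[\nu][\mu]$ by definition of $F[K]$ via $\max$, these commute and combine into a single shift by the larger set, which is exactly what the rule permits.

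The main obstacle I anticipate is bookkeeping the operator side conditions under the composition of shifts in the $(\bigwedge)$ and $(\mathsf{Cut})$ cases: one must check that $f[N(\iota)]$ still dominates the relevant norms ($N(\max\kPO(A))$, $\lh(C)$, etc.) and that $F[\matho(\iota)][0]$ still lies above $\kPO(A)$, $\kO(C)$, and $K_\Omega\alpha$. This is where $(\hyp{}{}(F))$ and its closure under $F \mapsto F[K]$ (remarked just before the lemma) do the real work: because $\matho(\iota) < \min\{\alpha, F(0)\}$ holds for the indices $\iota$ that actually matter (from the $(\bigvee)$/$(\bigwedge)$ conditions feeding the derivation), replacing $F$ by $F[\matho(\iota)]$ does not decrease $F(0)$ below any threshold it previously exceeded. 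I would isolate this as a one-line sub-remark ("shifting the operators by $(N(\iota),\matho(\iota))$ preserves all side conditions of every rule") and then the case analysis becomes entirely routine. The $(\mathsf{Cl})$ and axiom cases are trivial since $A$ is never the principal formula there (an $\Lstar$-literal or a $\itP{\ofA}{\Omega}t$ cannot be of $\bigwedge$-type with a proper index set unless it is $\neg\itP{\ofA}{\Omega}t$, which is handled by the $(\bigwedge)$ principal case).
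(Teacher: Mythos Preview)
Your overall approach matches the paper's (induction on $\alpha$, then case analysis on the last rule), and in fact the paper's own proof is a single line: check that $(\hyp{}{}(f[N(\iota)];F[\matho(\iota)];\alpha))$ holds and declare the rest a ``straightforward consequence of IH''. Two points of correction, however.

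First, your anxiety about preserving the rule side conditions under the operator shift is misplaced, and your proposed mechanism for it is wrong. Since $F$ is weakly increasing and $f$ is strictly increasing, one has $F[\matho(\iota)](0)=F(\max\{\matho(\iota),0\})\geq F(0)$ and $f[N(\iota)](0)\geq f(0)$ for \emph{every} $\iota\in J$, with no hypothesis on $\iota$ whatsoever. Every side condition in the rules is of the form ``something $\leq f(0)$'' or ``something $<F(0)$'', so they transfer automatically; there is no need to invoke $(\hyp{}{}(F))$, and your claim that ``$\matho(\iota)<\min\{\alpha,F(0)\}$ holds for the indices $\iota$ that actually matter'' is both unnecessary and not generally true (the lemma quantifies over all $\iota\in J$). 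The paper checks only the one nontrivial piece of the new hypothesis, namely $N(F[\matho(\iota)](0))\leq N(\iota)+N(F(0))\leq N(\iota)+f(0)\leq f[N(\iota)](0)$ via $(f.\ref{f:1})$.

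Second, in the case where the last rule is $(\bigwedge)$ with $A$ itself principal, you write that ``we already have, for every $\iota\in J$, a derivation $f[N(\iota)],F[\matho(\iota)]\vdash^{\alpha_\iota}_\rho\Gamma,A_\iota$'' and are ``done immediately''. That is not quite right: in this Tait-style calculus the conclusion sequent is carried into every premise, so the $\iota$-th premise is $f[N(\iota)],F[\matho(\iota)]\vdash^{\alpha_\iota}_\rho \Gamma,A,A_\iota$, with $A$ still present. You therefore cannot simply weaken the ordinal and stop; you must still invoke IH on this premise (viewing it as a derivation of $(\Gamma,A_\iota),A$ with smaller ordinal $\alpha_\iota$) to eliminate $A$. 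In other words, the principal case is handled by IH just like all the others, not bypassed.
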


\begin{proof}
By induction on $\alpha$.
Let $\iota \in J$.
Then we can check that the condition
$\hyp{}{} (f[N(\iota)]; F[\matho (\iota)]; \alpha)$ holds.
In particular, by the hypothesis $\hyp{}{} (f; F; \alpha)$ 
we have 
$N (F [\matho (\iota)]) = N (\iota) +  N (F(0)) \leq 
 N(\iota) + f(0) \leq f [N(\iota)] (0)$.
Now the assertion is a straightforward consequence of IH.
\qed
\end{proof}

We write $f \circ g$ to denote the result 
$m \mapsto f(g(m))$ of composing $f$ and $g$.

\begin{lemma}[Cut-reduction]
\label{lem:cut-red}
%Let $F \in \opF$.
Assume that $C \simeq \OR{\iota \in J} C_\mu$,
$\rk (C) = \rho \neq \Omega$,
$\max \{ \lh (C), N(\max \kPO (C)), N(\max \kSO (C)) \} \leq f (g (0))$, 
and that
$\kO (C) < F(0)$.
If 
$f, F \vdash^\alpha_\rho \Gamma, \neg C$ and
$g, F \vdash^\beta_\rho \Gamma, C$, then
$f \circ g, F \vdash^{\alpha + \beta}_\rho \Gamma$.
\end{lemma}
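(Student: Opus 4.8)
The plan is to argue by induction on $\beta$, keeping $\alpha$, $\rho$ and $C$ fixed while letting $\Gamma$ and the operator $f$ vary, and to inspect the last inference of the derivation witnessing $g, F \vdash^{\beta}_{\rho} \Gamma, C$. Two kinds of cases arise.

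\emph{Case A: $C$ is not the active formula of the last inference.} The axiom cases are immediate: being of $\bigvee$-type, $C$ figures in no instance of $(\mathsf{Ax}1)$ or $(\mathsf{Ax}2)$, so the axiom already holds within $\Gamma$, and $f \circ g, F \vdash^{\alpha + \beta}_{\rho} \Gamma$ follows at once --- the control hypothesis holds because $N(\alpha + \beta) = N(\alpha) + N(\beta) \leq f(0) + g(0) \leq f(g(0))$ by ($f$.\ref{f:1}), and $K_{\Omega}(\alpha + \beta) = K_{\Omega}\alpha \cup K_{\Omega}\beta < F(0)$. For each rule $(\bigvee)$, $(\bigwedge)$, $(\Clrule)$, $(\mathsf{Cut})$ acting on a formula of $\Gamma$ I would apply the induction hypothesis to the premise(s), in which $C$ persists as a side formula, obtaining $f \circ g, F \vdash^{\alpha + \beta_0}_{\rho} \Gamma', C'$ for $\beta_0 < \beta$, and then re-apply the same inference with $g$ replaced by $f \circ g$. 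Every side condition of the shape $N(\cdots) \leq g(0)$ upgrades to $N(\cdots) \leq (f \circ g)(0)$ since $g(0) \leq f(g(0))$ and $f$ is increasing; the $K_{\Omega}$- and $\kO$-conditions and the bound $\rk(\cdot) < \rho$ are untouched; and $\alpha + \beta_0 < \alpha + \beta$ provides the ordinal decrease demanded by the re-applied rule.

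\emph{Case B: $C$ is the active formula.} Since $C$ is of $\bigvee$-type with $\rk(C) = \rho \neq \Omega$, Lemma~\ref{lem:kPi}(\ref{lem:kPi:4}) forbids $C$ from having the form $\itP{\ofA}{\Omega} t$, so the last inference cannot be a $(\Clrule)$ and must be a $(\bigvee)$ introducing $C$ through one of its components $C_{\iota_0}$ via a witness $\iota_0$ with $N(\iota_0) \leq g(0)$ and $\matho(\iota_0) < \min\{\beta, F(0)\}$, from a premise $g, F \vdash^{\beta_0}_{\rho} \Gamma, C, C_{\iota_0}$ with $\beta_0 < \beta$. Applying the induction hypothesis with $\Gamma$ enlarged by $C_{\iota_0}$ gives $f \circ g, F \vdash^{\alpha + \beta_0}_{\rho} \Gamma, C_{\iota_0}$. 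On the other side $\neg C$ is of $\bigwedge$-type, so Inversion (Lemma~\ref{lem:inversion}) applied to $f, F \vdash^{\alpha}_{\rho} \Gamma, \neg C$ yields $f[N(\iota_0)], F[\matho(\iota_0)] \vdash^{\alpha}_{\rho} \Gamma, \neg C_{\iota_0}$; since $N(\iota_0) \leq g(0)$ the function $f[N(\iota_0)]$ lies pointwise below $f \circ g$, and since $\matho(\iota_0) < F(0)$ the operator $F[\matho(\iota_0)]$ lies pointwise below $F$ by (\ref{hyp:2}), so this weakens to $f \circ g, F \vdash^{\alpha}_{\rho} \Gamma, \neg C_{\iota_0}$. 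Because $\rk(C_{\iota_0}) < \rk(C) = \rho$, and the length and coefficient bounds required of a cut formula transfer from $C$ to $C_{\iota_0}$ (using Lemma~\ref{lem:kPi}(\ref{lem:kPi:1}) and $\matho(\iota_0) < F(0)$), a single $(\mathsf{Cut})$ on $C_{\iota_0}$ applied to the last two derivations yields $f \circ g, F \vdash^{\alpha + \beta}_{\rho} \Gamma$, using once more $\alpha + \beta_0 < \alpha + \beta$.

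I expect the main obstacle to be the control-hypothesis book-keeping in the commutation subcase for the $(\bigwedge)$ rule of Case A: there the premises come with the shifted operators $g[N(\iota)], F[\matho(\iota)]$, so in order to invoke the induction hypothesis on a premise the derivation of $\neg C$ must first be re-presented under the matching shifted operator, which calls for a weakening step reconciled with (\ref{hyp:2}), with the bound on $\matho(\iota)$ coming from the $(\bigwedge)$ side condition $\kPO(A) < F(0)$, and with the arithmetical inequalities for $f$ recorded in ($f$.\ref{f:1}), ($f$.\ref{f:2}) and in Lemmas~\ref{lem:N(a)}--\ref{lem:f[n](m)}. The remaining cases are the routine book-keeping characteristic of operator-controlled cut elimination.
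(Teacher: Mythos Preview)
Your strategy matches the paper's: induction on $\beta$, case split on whether $C$ is principal, and in the principal case Inversion on $\neg C$ followed by a $(\mathsf{Cut})$ on $C_{\iota_0}$. One correction to your induction setup: it is $g$ and $F$ (not $f$) that need to vary, since in the $(\bigwedge)$ commutation the premises carry the shifted operators $g[N(\iota)], F[\matho(\iota)]$. With that adjusted, the obstacle you anticipate dissolves via the identity $f \circ (g[N(\iota)]) = (f\circ g)[N(\iota)]$: applying the induction hypothesis to a premise directly yields $(f\circ g)[N(\iota)],\,F[\matho(\iota)] \vdash^{\alpha+\beta_\iota}_\rho \Gamma, A_\iota$, exactly the shape needed to re-apply $(\bigwedge)$. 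The weakening of the $\neg C$ side from $F$ to $F[\matho(\iota)]$ is in the monotone direction ($F[\matho(\iota)]\geq F$ pointwise since $F$ is weakly increasing) and does \emph{not} need~(\ref{hyp:2}); that hypothesis is used only in your Case~B, where you correctly invoke it to pass from $F[\matho(\iota_0)]$ back to $F$. A final minor point: the $(\mathsf{Cut})$ rule demands both premises at the \emph{same} ordinal, so in Case~B the paper first lifts the $\neg C_{\iota_0}$ derivation from $\alpha$ up to $\alpha+\beta_0$ before cutting.
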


\begin{proof}
By induction on $\beta$.
%Without loss of generality, we can assume $\rk (C) = \Omega + n > \Omega$.

\textsc{Case.} $C$ is not the principal formula of the last rule
 ($\infJ$) that forms 
$g, F \vdash^\beta_\rho \Gamma, C$:
We only consider the case that ($\infJ$) is ($\AND{}$).
The other cases can be treated similarly.
Let us suppose that the sequent $\Gamma$ contains a formula
$\AND{\iota \in J} A_\iota$ and and the inference rule ($\infJ$) has
 the premises
$g[N(\iota)], F[\matho (\iota)] \vdash^{\beta_\iota}_\rho 
 \Gamma, A_\iota,  C$ $(\iota \in J)$
for some $\beta_\iota < \beta$.
Then, since 
$f \circ (g[N(\iota)] (0)) = (f \circ g) [N(\iota)] (0)$
and
$F(0) \leq F[\matho (\iota)] (0)$ for all $\iota \in J$, IH
 yields the sequent 
\begin{equation*} 
(f \circ g) [N(\iota)], F[\matho (\iota)] 
\vdash^{\alpha + \beta_\iota}_\rho \Gamma, A_\iota
\end{equation*}
for all $\iota \in J$.
Hence another application of ($\AND{}$) yields the sequent
$f \circ g, F \vdash^{\alpha + \beta}_{\rho} \Gamma$.

\textsc{Case.} $C$ is the principal formula of the last rule ($\infJ$):
In this case ($\infJ$) should be ($\bigvee$) since 
$\rk (C) \neq \Omega$.
Let the premise be of the form
$g, F \vdash^{\beta_0}_\rho \Gamma, C_{\iota_0}, C$ for some
$\beta_0 < \beta$ and $\iota_0 \in J$ such that
$N(\iota_0) \leq g(0)$ and
$\matho (\iota_0) < \min \{ \beta, F(0) \}$.
IH yields the sequent
\begin{equation}
f \circ g, F \vdash^{\alpha + \beta_0}_\rho \Gamma, C_{\iota_0}.
\label{e:l:cut-red:1}
\end{equation}
On the other hand, Inversion lemma yields
 the sequent
$f [N (\iota_0)], F [ \matho (\iota_0)] \vdash^\alpha_\rho 
 \Gamma, \neg C_{\iota_0}$.
Let us observe the following.
First, 
$f [N (\iota_0)] (0) = f (N (\iota_0)) \leq f (g(0)) = 
 (f \circ g) (0)$
since $N (\iota_0) \leq g(0)$.
Secondly, 
$F [\matho (\iota_0)] (0) \leq F (0)$ 
 by the hypothesis $\hyp{}{} (F)$ since $\matho (\iota_0) < F (0)$.
Hence
\begin{equation}
f \circ g, F \vdash^{\alpha + \beta_0}_\rho, 
 \Gamma, \neg C_{\iota_0}.
\label{e:l:cut-red:2}
\end{equation}
We also observe that
$N (\alpha + \beta) \leq N (\alpha) + N (\beta) \leq
 f(0) + g(0) \leq (f \circ g) (0)$. 
%by Property ($f$.\ref{prop:f:1}).
Further
$K_{\Omega} (\alpha + \beta) < F (0)$
since 
$ K_{\Omega} \alpha \cup K_{\Omega} \beta < F(0)$.
Now by an application of $(\mathsf{Cut})$ to the two sequents (\ref{e:l:cut-red:1}) and
 (\ref{e:l:cut-red:2}) we obtain
$f \circ g, F \vdash^{\alpha + \beta}_\rho \Gamma$.

The other cases are similar.
\qed
\end{proof}

For a sequent $\Gamma$
we write $\kPO (\Gamma)$ to denote the set
$\bigcup_{B \in \Gamma} \kPO (B)$.

\begin{lemma}
\label{l:CE1}
Let $k < \omega$.
If $f, F \vdash^\alpha_{\Omega +k+2} \Gamma$, then
$f^{F^\alpha (0) +1}, F
 \vdash^{\Omega^\alpha}_{\Omega +k+1} \Gamma$.
\end{lemma}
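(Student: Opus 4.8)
The plan is to prove the lemma by main induction on $\alpha$, splitting on the last inference of the given derivation $f,F\vdash^{\alpha}_{\Omega+k+2}\Gamma$. In every case the target derivation is obtained by re‑applying the same last rule (or, in the one genuinely critical case, by an appeal to Cut‑reduction), after the induction hypothesis has been applied to the immediate premises; the only real work is quantitative, namely verifying the control hypothesis $\hyp{}{}(f^{F^{\alpha}(0)+1};F;\Omega^{\alpha})$ and the numerical side conditions of the re‑applied rule, and checking that the operator produced by the induction hypothesis is pointwise dominated by the one demanded. For this I will use: Lemma~\ref{lem:N(a)} to control norms ($N(\alpha)\le f^{F^{\alpha}(0)}(0)$, whence $N(\Omega^{\alpha})=N(\alpha)+2\le f^{F^{\alpha}(0)+1}(0)$); the Corollary on the collapsing operators to get strict inequalities like $F^{\alpha_{0}}(0)<F^{\alpha}(0)$ out of $\alpha_{0}<\alpha$ together with $K_{\Omega}\alpha_{0}\cup\{0\}<F(0)\le F^{\alpha}(0)$; and Lemmas~\ref{lem:F[xi]}, \ref{lem:F^a^b}, \ref{lem:f[n](m)} and Corollary~\ref{c:f[n][m]} for the operator estimates. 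Throughout I use tacitly that $\vdash$ is monotone under enlarging $f$, enlarging $F$ (while keeping $\hyp{}{}(F)$), or enlarging the height, once the control hypothesis is re‑checked. The extra ``$+1$'' in the exponent $F^{\alpha}(0)+1$ is the slack that absorbs the stray finite and $F^{\alpha_{0}}(0)$‑sized additive terms that accumulate below.

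If the derivation ends with $(\mathsf{Ax}1)$ or $(\mathsf{Ax}2)$, the same axiom yields $\Gamma$ and one merely re‑checks $\hyp{}{}(f^{F^{\alpha}(0)+1};F;\Omega^{\alpha})$ as above. If it ends with $(\bigvee)$ or with $(\Clrule)$, apply the induction hypothesis to the unique premise, of height $\alpha_{0}<\alpha$: this yields a derivation of $\Gamma,A_{\iota_{0}}$ (resp. of $\Gamma,\ofA(\itP{\ofA}{\Omega},t)$) of height $\Omega^{\alpha_{0}}<\Omega^{\alpha}$ and operator $f^{F^{\alpha_{0}}(0)+1}$, which is dominated by $f^{F^{\alpha}(0)+1}$ since $F^{\alpha_{0}}(0)<F^{\alpha}(0)$; re‑applying the same rule — whose side conditions ($\matho(\iota_{0})<F(0)\le F^{\alpha}(0)$ and $\matho(\iota_{0})<\alpha\le\Omega^{\alpha}$, resp. $\Omega<\alpha\le\Omega^{\alpha}$) all persist — gives the claim.

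In the $(\bigwedge)$ case, for each $\iota\in J$ the induction hypothesis turns the premise $f[N(\iota)],F[\matho(\iota)]\vdash^{\alpha_{\iota}}_{\Omega+k+2}\Gamma,A_{\iota}$ into a derivation of $\Gamma,A_{\iota}$ of height $\Omega^{\alpha_{\iota}}<\Omega^{\alpha}$ with operator $(f[N(\iota)])^{\gamma_{\iota}}$, where $\gamma_{\iota}:=(F[\matho(\iota)])^{\alpha_{\iota}}(0)+1$. The crux is to dominate this operator pointwise by $(f^{F^{\alpha}(0)+1})[N(\iota)]$. By Lemma~\ref{lem:F[xi]}, $(F[\matho(\iota)])^{\alpha_{\iota}}(0)\le F^{\alpha_{\iota}}[\matho(\iota)](0)=F^{\alpha_{\iota}}(\matho(\iota))$, and by the Corollary on collapsing operators this is $<F^{\alpha}(0)$ (using $\alpha_{\iota}<\alpha$ and $K_{\Omega}\alpha_{\iota}\cup\{\matho(\iota)\}<F(0)\le F^{\alpha}(0)$), so $\gamma_{\iota}<F^{\alpha}(0)$; then, bounding $(f[N(\iota)])^{\gamma_{\iota}}(m)$ by $(f[N(\iota)])^{\gamma_{\iota}}(N(\iota)+m)\le f^{\gamma_{\iota}+2}(N(\iota)+m)$ via monotonicity of $(f[N(\iota)])^{\gamma_{\iota}}$ and Corollary~\ref{c:f[n][m]}, and $f^{\gamma_{\iota}+2}\le f^{F^{\alpha}(0)+1}$ via Lemma~\ref{lem:F^a^b} (legitimate since $\gamma_{\iota}+2<F^{\alpha}(0)$), one obtains $(f[N(\iota)])^{\gamma_{\iota}}(m)\le f^{F^{\alpha}(0)+1}(N(\iota)+m)$ as required. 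Re‑applying $(\bigwedge)$ at height $\Omega^{\alpha}$, with the inherited conditions $N(\max\kPO(A))\le f(0)$ and $\kPO(A)<F(0)$, closes this case.

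The $(\mathsf{Cut})$ case carries the content of the lemma. Let $C$ be the cut formula, so $C$ is of $\bigvee$‑type with $\rk(C)\le\Omega+k+1$, and apply the induction hypothesis to both premises, obtaining $f^{F^{\alpha_{0}}(0)+1},F\vdash^{\Omega^{\alpha_{0}}}_{\Omega+k+1}\Gamma,C$ and $f^{F^{\alpha_{0}}(0)+1},F\vdash^{\Omega^{\alpha_{0}}}_{\Omega+k+1}\Gamma,\neg C$. If $\rk(C)<\Omega+k+1$ the cut is still admissible at the new level: enlarge both operators to $f^{F^{\alpha}(0)+1}$ and re‑apply $(\mathsf{Cut})$, getting $f^{F^{\alpha}(0)+1},F\vdash^{\Omega^{\alpha_{0}}+1}_{\Omega+k+1}\Gamma$, then enlarge the height to $\Omega^{\alpha}$ (note $\Omega^{\alpha_{0}}+1<\Omega^{\alpha_{0}+1}\le\Omega^{\alpha}$). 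If $\rk(C)=\Omega+k+1$ — the level to be removed — then $\rk(C)\ne\Omega$, so by item~(\ref{lem:kPi:4}) of Lemma~\ref{lem:kPi} $C$ is a genuine propositional compound, and Lemma~\ref{lem:cut-red} applies with $\rho=\Omega+k+1=\rk(C)$ and both of its operators taken to be $f^{F^{\alpha_{0}}(0)+1}$ (its hypotheses on $\lh(C)$, $\kPO(C)$, $\kSO(C)$, $\kO(C)$ being exactly those recorded in the $(\mathsf{Cut})$‑inference), yielding
\[
 f^{F^{\alpha_{0}}(0)+1}\circ f^{F^{\alpha_{0}}(0)+1},\ F\ \vdash^{\Omega^{\alpha_{0}}+\Omega^{\alpha_{0}}}_{\Omega+k+1}\ \Gamma .
\]
Since $f^{F^{\alpha_{0}}(0)+1}\circ f^{F^{\alpha_{0}}(0)+1}=(f^{F^{\alpha_{0}}(0)+1})^{1}\le f^{F^{\alpha_{0}}(0)+2}\le f^{F^{\alpha}(0)+1}$ by Lemma~\ref{lem:F^a^b} (using $F^{\alpha_{0}}(0)+2\le F^{\alpha}(0)+1$) and $\Omega^{\alpha_{0}}+\Omega^{\alpha_{0}}=\Omega^{\alpha_{0}}\cdot 2<\Omega^{\alpha_{0}+1}\le\Omega^{\alpha}$, enlarging to $f^{F^{\alpha}(0)+1}$ and to height $\Omega^{\alpha}$ finishes the induction. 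The logical skeleton is the familiar ``eliminate one cut level''; the real obstacle is the bookkeeping of the $(\bigwedge)$ case and this reducing sub‑case — re‑establishing $\hyp{}{}(f^{F^{\alpha}(0)+1};F;\Omega^{\alpha})$ and the rule side conditions for the enlarged operator, and threading Lemmas~\ref{lem:N(a)}, \ref{lem:F[xi]}, \ref{lem:F^a^b}, \ref{lem:f[n](m)} and Corollary~\ref{c:f[n][m]} together so that the operators coming out of the induction hypothesis stay pointwise below $f^{F^{\alpha}(0)+1}$.
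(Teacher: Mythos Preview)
Your strategy---induction on $\alpha$, re-applying each rule after IH, and invoking Cut-reduction (Lemma~\ref{lem:cut-red}) in the critical $(\mathsf{Cut})$ case---is exactly the paper's, and your treatment of the $(\bigwedge)$ case via Lemma~\ref{lem:F[xi]} and Corollary~\ref{c:f[n][m]} mirrors the paper's Claim. One correction: the two places where you cite Lemma~\ref{lem:F^a^b} to justify $f^{\gamma_\iota+2}\le f^{F^\alpha(0)+1}$ and $f^{F^{\alpha_0}(0)+2}\le f^{F^\alpha(0)+1}$ are miscitations---Lemma~\ref{lem:F^a^b} is about the ordinal operator $F$, not the numerical $f$; what you need is the Corollary following the definition of $f^\alpha$ (monotonicity of $f^{\beta}$ in $\beta$ under a norm bound), and it is precisely those norm bounds $N(\gamma_\iota+2)\le f[N(F^\alpha(0)+1)](\cdot)$ that constitute the bulk of the paper's argument (its inequalities (\ref{e:CE1:1})--(\ref{e:CE1:4})) but are left implicit in your sketch.
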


\begin{proof}
By induction on $\alpha$. The argument splits into several cases
 depending on the last rule that forms 
$f, F \vdash^\alpha_{\Omega +k+2} \Gamma$.
We only consider the following two critical cases.
Let $K$ denote the set $\kPO (\Gamma)$.

\textsc{Case.} The last rule is $(\mathsf{Cut})$: In this case there are two
 premises 
$f, F \vdash^{\alpha_0}_{\Omega +k+2} \Gamma, C$ and
$f, F \vdash^{\alpha_0}_{\Omega +k+2} \Gamma, \neg C$ with
a cut formula $C$ for some $\alpha_0 < \alpha$ such that
$\rk (C) < \Omega + k+2$,
$\max \{ \lh (C), N(\max \kPO (C)), N(\max \kSO (C)) \} \leq f(0)$ 
and
$\kO (C) < F(0)$.
Let $K_0$ denote the set $\kPO (\Gamma, \neg C)$.
Then IH yields the two sequents 
\[
 f^{F^{\alpha_0} [K_0](0) +1}, F 
 \vdash^{\Omega^{\alpha_0}}_{\Omega +k+1}
 \Gamma, C,
 \quad
 f^{F^{\alpha_0} [K_0](0) +1}, F \vdash^{\Omega^{\alpha_0}}_{\Omega +k+1}
 \Gamma, \neg C.
\]Hence Cut-reduction lemma yields the sequent
\begin{equation*}
f^{F^{\alpha_0} [K_0](0) +1} \circ f^{F^{\alpha_0} [K_0](0) +1}, F 
 \vdash^{\Omega^{\alpha_0} + \Omega^{\alpha_0}}_{\Omega +k+1}
 \Gamma.
\end{equation*}
Clearly $\Omega^{\alpha_0} + \Omega^{\alpha_0} < \Omega^\alpha$.
Further
$N (\Omega^\alpha) = N (\alpha) +1 \leq f^{F^{\alpha}[K](0) +1} (0)$ 
since
$N (\alpha) \leq f(0) = f^0 (0) < f^{F^{\alpha} (0) +1} (0)$.
It remains to show that 
\[
 (f^{F^{\alpha_0} [K_0](0) +1} \circ f^{F^{\alpha_0} [K_0](0) +1}) (0) \leq 
 f^{F^{\alpha} [K](0) +1} (0).
\]
Let us see that
$K_0 \subseteq K \cup \kO (C)
 < F^\alpha [K] (0)$
since $\kO (C) < F(0)$.
This implies $F^{\alpha_0}[K_0](0) < F^\alpha [K](0)$,
and hence
$F^{\alpha_0}[K_0](0) + 1 < F^\alpha [K](0)$.
We can also see that
\[
 N(\max K_0) \leq 
 \max \{ N(\max K), N(\max \kSO (C)) \} \leq 
 \max \{ N(\max K), f(0) \}.
\]
From this and the inequality $N(\alpha_0) \leq f(0)$ one can see that
\begin{eqnarray*}
N(F^{\alpha_0}[K_0](0) +1) 
&\leq&
N(F[K_0](0)) + N(\alpha_0) + 1 \\
&\leq&
N(F[K](0)) + f(0) + f(0) +1 \\
&\leq&
f(N(F^{\alpha}[K](0)) + f^{F^\alpha [K](0)}(0)).
\end{eqnarray*}
This allows us to conclude as follows.
\begin{eqnarray*}
&&
(f^{F^{\alpha_0} [K_0](0) +1} \circ f^{F^{\alpha_0} [K_0](0) +1}) (0) \\
&\leq& 
(f^{F^{\alpha_0} [K_0](0) +1} \circ f^{F^{\alpha_0} [K_0](0) +1}) 
(f^{F^\alpha [K](0)}(0)) \\
&\leq& 
f^{F^{\alpha} [K](0)} (f^{F^\alpha [K](0)}(0)) \\
&\leq&
f^{F^\alpha [K](0) +1}(0).
\end{eqnarray*}

\textsc{Case.} The last rule is ($\bigwedge$):
In this case there exists a formula
$A \simeq \AND{\iota \in J} A_\iota \in \Gamma$ such that
$N(\max \kPO (A)) \leq f(0)$,
$\kPO (A) < F(0)$ and
$\forall \iota \in J$, $\exists \alpha_\iota < \alpha$ s.t. 
$f [N (\iota)], F [ \matho (\iota)] 
 \vdash^{\alpha_\iota}_{\Omega +k+2} \Gamma, A_\iota$.
%For each $\iota \in J$ let $K_\iota$ denote the set
%$\kPO (\Gamma, A_\iota)$.
By IH 
$(f [N (\iota)])^{F [\matho (\iota)]^{\alpha_\iota}(0) +1}, 
 F [\matho (\iota)] 
 \vdash^{\Omega^{\alpha_\iota}}_{\Omega +k+1} \Gamma, A_\iota$
for all $\iota \in J$.

\begin{claim}
$(f [N (\iota)])^{F [\matho (\iota)]^{\alpha_\iota}(0) +1} (0) \leq
 f^{F^\alpha (0) +1} [N (\iota)] (0)$ for all $\iota \in J$.
\end{claim}
\label{c:CE1}

Assuming the claim,
$f^{F^\alpha (0) +1} [N (\iota)], F [ \matho (\iota)] 
 \vdash^{\Omega^{\alpha_\iota}}_{\Omega +k+1} \Gamma, A_\iota$
for all $\iota \in J$ and hence an application of ($\bigwedge$) yields
$f^{F^\alpha (0) +1}, F \vdash^{\Omega^\alpha}_{\Omega +k+1} \Gamma$.
To show the claim fix $\iota \in J$ arbitrarily and let 
$n := N (\iota)$. %and $\xi := \matho (\iota)$.
%We are proving
%$(f [n])^{F^{\alpha_\mu} (0) +1} (0) \leq 
% f^{F^\alpha (0) +1} [n] (0)$.
Then Corollary \ref{c:f[n][m]} yields
\begin{equation}
f[n]^{F [\matho (\iota)]^{\alpha_\iota} (0) +1} (0) \leq 
%f[n]^{F^{\alpha_\iota} [K_\iota](0) +1} (n) \leq
f^{F [\matho (\iota)]^{\alpha_\iota}(0) +3} (n).
\label{e:CE1:1}
\end{equation}
%By Lemma \ref{lem:kPi}.\ref{lem:kPi:1}
%$\kPO (A_\iota) \subseteq \kPO (A) \cup \{ \matho (\iota) \}$.
By Lemma \ref{lem:kPi}.\ref{lem:kPi:2}, 
$\matho (\iota) \leq \kPO (A)$
since $\matho (\iota) < \Omega$.
Hence
$\matho (\iota) < F(0)$
since
$\kPO (A) < F(0)$.
This together with the hypothesis ($\hyp{}{}(F)$) yields
$K_{\Omega} \alpha_\iota < F[\matho (\iota)] \leq F (0) \leq
 F^\alpha (0)$.
Further
$F [\matho (\iota)]^{\alpha_\iota} (0) \leq
 F^{\alpha_\iota} (\matho (\iota))$
by Lemma \ref{lem:F[xi]}.
Hence
$F [\matho (\iota)]^{\alpha_\iota}(0) = 
 F^{\alpha_\iota} (\matho (\iota)) < F^\alpha (0)$
since $\matho (\iota) < F(0) \leq F^\alpha (0)$.
And hence
\begin{equation}
F [\matho (\iota)]^{\alpha_\iota}(0) + 3 < F^\alpha (0).
\label{e:CE1:2}
\end{equation}
As in Example \ref{ex:1} we can see that
$2n +3 \leq f^\omega (n) \leq f^{F^\alpha (0)} (n)$.
Hence
\begin{eqnarray}
&&
N(F [\matho (\iota)]^{\alpha_\iota} (0) +3) \nonumber \\
&=&
N(F(0)) + N(\matho (\iota)) + N(\alpha_\iota) +3 \nonumber \\
%&\leq&
%\max \{ N(F(0)), F(\matho (\iota)) \} + f(n) +3 
%\quad \text{ since } K \subseteq K \cup \{ \matho (\iota) \}, \nonumber \\
&\leq&
N(F^\alpha (0)) +n + f(n) +3 
\quad \text{ since } 
N (\alpha_\iota) \leq f[N(\iota)] (0) = f(n), \nonumber \\
&\leq&
f(N(F^\alpha (0)) + 2n + 3) 
\quad \text{ from the condition } (f.\ref{f:1}), \nonumber \\
&\leq&
f(N(F^\alpha (0)) + f^{F^\alpha (0)} (n)).
\label{e:CE1:3}
\end{eqnarray}
The two conditions (\ref{e:CE1:2}) and  (\ref{e:CE1:3}) allows us to
 deduce that
\begin{eqnarray}
f^{F [\matho (\iota)]^{\alpha_\iota} (0) +3} (n) % \nonumber \\
&\leq&
f^{F [\matho (\iota)]^{\alpha_\iota} (0) +3} 
(f^{F^\alpha (0)} (n)) \nonumber \\
&\leq&
f^{F^\alpha (0)} (f^{F^\alpha (0)} (n)) \nonumber \\
&\leq&
f^{F^{\alpha +1} (0)} (n) =
f^{F^{\alpha +1} (0)} [n] (0).
\label{e:CE1:4}
\end{eqnarray}
Combining the two inequality (\ref{e:CE1:1}) and  (\ref{e:CE1:4})
 enables us to conclude the claim, and hence completes this case.
\qed
\end{proof}

\begin{lemma}[Predicative Cut-elimination]
\label{lem:PCE}
Assume %that 
$\{ \alpha, \beta, \gamma \} < \Omega$,
$N (\alpha) \leq f^\gamma (0)$ and
$K_{\Omega} \alpha < F (0)$.
If 
$f^\gamma, F \vdash^\beta_{\rho + \omega^\alpha} \Gamma$, then
$f^{F^{\Omega \cdot \alpha + \gamma + \beta} (0) +1}, F
 \vdash^{\varphi \alpha \beta}_{\rho} \Gamma$.
\end{lemma}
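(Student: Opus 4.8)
The plan is to argue by induction on $\alpha$, with a subsidiary induction on $\beta$, threading the operators through by means of the arithmetical lemmas of this section. The case $\alpha = 0$ is the predicative counterpart of Lemma~\ref{l:CE1}: there $\varphi 0 \beta = \omega^{\beta}$ and $\rho + \omega^{0} = \rho + 1$, and the subsidiary induction on $\beta$ runs exactly as in Lemma~\ref{l:CE1}, the only substantial step being a $(\mathsf{Cut})$ whose cut formula $C$ has rank exactly $\rho \neq \Omega$, which is absorbed via Cut-reduction (Lemma~\ref{lem:cut-red}). For $\alpha > 0$ one again proceeds by case analysis on the last inference of $f^{\gamma}, F \vdash^{\beta}_{\rho + \omega^{\alpha}} \Gamma$. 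Before that, one checks that the conclusion's hypothesis $\hyp{}{}(f^{F^{\Omega \cdot \alpha + \gamma + \beta}(0) + 1}; F; \varphi \alpha \beta)$ follows from $\hyp{}{}(f^{\gamma}; F; \beta)$, $N(\alpha) \le f^{\gamma}(0)$ and $K_{\Omega} \alpha < F(0)$: one has $N(\varphi \alpha \beta) = N(\alpha) + N(\beta) + 1 \le 2 f^{\gamma}(0) + 1 \le f(f^{\gamma}(0)) \le f^{F^{\Omega \cdot \alpha + \gamma + \beta}(0) + 1}(0)$ by ($f$.\ref{f:1}) and $\gamma < F^{\Omega \cdot \alpha + \gamma + \beta}(0)$, and $K_{\Omega}(\varphi \alpha \beta) = \{ \varphi \alpha \beta \} < F(0)$ because $\alpha, \beta < F(0)$ with $F(0)$ strongly critical.

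If the last inference is $(\mathsf{Ax}1)$ or $(\mathsf{Ax}2)$ there is nothing more to do. If it is $(\bigvee)$, $(\bigwedge)$, $(\Clrule)$, or a $(\mathsf{Cut})$ whose cut formula already has rank $< \rho$, one applies the subsidiary IH to each premise $f^{\gamma}, F \vdash^{\beta_{\iota}}_{\rho + \omega^{\alpha}} \Gamma_{\iota}$, $\beta_{\iota} < \beta$, obtaining $f^{F^{\Omega \cdot \alpha + \gamma + \beta_{\iota}}(0) + 1}, F \vdash^{\varphi \alpha \beta_{\iota}}_{\rho} \Gamma_{\iota}$, and re-applies the same rule: here $\varphi \alpha \beta_{\iota} < \varphi \alpha \beta$ by strict monotonicity of $\varphi \alpha$, and the bookkeeping showing that passing to $f[N(\iota)]$, to $F[\matho(\iota)]$, or to a composite of two copies of the operator does not exceed the exponent $F^{\Omega \cdot \alpha + \gamma + \beta}(0) + 1$ is the same as in Lemma~\ref{l:CE1}, using Lemmas~\ref{lem:f[n](m)} and~\ref{c:f[n][m]}, the hypothesis ($\hyp{}{}(F)$), and the monotonicity of $F^{(\cdot)}$ established earlier.

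The critical case is a $(\mathsf{Cut})$ with cut formula $C$ of rank $\rho \le \rk(C) < \rho + \omega^{\alpha}$ and premises $f^{\gamma}, F \vdash^{\beta_{0}}_{\rho + \omega^{\alpha}} \Gamma, C$, $f^{\gamma}, F \vdash^{\beta_{0}}_{\rho + \omega^{\alpha}} \Gamma, \neg C$ with $\beta_{0} < \beta$; by the subsidiary IH both premises become $f^{F^{\Omega \cdot \alpha + \gamma + \beta_{0}}(0) + 1}, F \vdash^{\varphi \alpha \beta_{0}}_{\rho} \Gamma, (\neg) C$. When $\rk(C) = \rho$ (hence $\rho \neq \Omega$, the value $\Omega$ being the impredicative situation of Lemma~\ref{l:CE1} and not arising here), Cut-reduction gives a derivation of $\Gamma$ of length $\varphi \alpha \beta_{0} + \varphi \alpha \beta_{0} < \varphi \alpha \beta$ — using additive indecomposability of $\varphi \alpha \beta$ together with $\varphi \alpha \beta_{0} < \varphi \alpha \beta$ — with operator $f^{F^{\Omega \cdot \alpha + \gamma + \beta_{0}}(0) + 1} \circ f^{F^{\Omega \cdot \alpha + \gamma + \beta_{0}}(0) + 1}$, dominated by $f^{F^{\Omega \cdot \alpha + \gamma + \beta_{0}}(0) + 2}$ and hence, by monotonicity of $F^{(\cdot)}$, by $f^{F^{\Omega \cdot \alpha + \gamma + \beta}(0) + 1}$. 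When $\rho < \rk(C) < \rho + \omega^{\alpha}$, one reads off the leading exponent $\alpha_{0} < \alpha$ from the Cantor normal form of $\rk(C)$ above $\rho$, re-inserts the cut at a level of the form $\rho + \omega^{\alpha_{0}} \cdot c$, and applies the main IH at level $\alpha_{0}$ — for $\alpha$ a successor, iterated finitely often, which is precisely the $\sup_{n}(\varphi \alpha')^{n}(\cdots)$ clause in the recursion for $[\varphi]$ in Definition~\ref{def:val}; for $\alpha$ a limit, the clause $\varphi \gamma(\beta + 1) = \sup_{\alpha' < \gamma} \varphi \alpha'(\cdots)$, together with the fact that $\varphi \alpha \xi$ is a common fixed point of every $\varphi \alpha'$ with $\alpha' < \alpha$. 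The resulting length is a finite iterate of $\varphi \alpha_{0}$ applied to $\varphi \alpha \beta_{0} + 1$, which stays below $\varphi \alpha \beta$; the operator is kept in check by Lemmas~\ref{lem:(f^a)^b}, \ref{lem:F[xi]} and~\ref{lem:F^a^b}, the $\Omega \cdot \alpha$ offset in the exponent being exactly what makes these estimates close.

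I expect the main obstacle to be this last sub-case: synchronising the Veblen recursion for $\varphi \alpha \beta$ with the finite iteration of lower-level cut-eliminations, while simultaneously keeping the operator bounded by $f^{F^{\Omega \cdot \alpha + \gamma + \beta}(0) + 1}$. The ordinal side is routine Veblen arithmetic, but on the operator side one must thread the $\Omega \cdot \alpha$ shift through every application of Cut-reduction and every passage to a side operator $F[\matho(\iota)]$ or $F[K]$; it is here that the reflection property ($\hyp{}{}(F)$) and Lemma~\ref{lem:F[xi]} are indispensable for re-absorbing those side operators, and — as already in Lemma~\ref{l:CE1} — one must keep the norm $N$ and the ordinal order carefully apart when bounding expressions such as $N(F^{\Omega \cdot \alpha + \gamma + \beta_{0}}(0) + 1)$.
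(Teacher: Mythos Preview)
Your proposal is correct and follows the paper's route: main induction on $\alpha$, side induction on $\beta$, with the critical $(\mathsf{Cut})$ case handled by applying SIH to both premises, then Cut-reduction (Lemma~\ref{lem:cut-red}), then iterating MIH at the leading exponent $\alpha_1 < \alpha$ of $\rk(C)-\rho$. The paper does not split off $\alpha=0$, $\rk(C)=\rho$, or $\alpha$ successor versus limit --- the number of MIH iterations is simply $l$, the length of the Cantor normal form $\rk(C)=\rho+\omega^{\alpha_1}+\cdots+\omega^{\alpha_l}$, and the ordinal bound $\xi_l<\varphi\alpha\beta$ follows uniformly from $\varphi\alpha\beta$ being a fixed point of every $\varphi\alpha'$ with $\alpha'<\alpha$; the bulk of the paper's effort is the explicit bookkeeping sequence $(\gamma_n)_{n\le l}$ and the estimate $N(\gamma_l)\le f^{F^{\Omega\cdot\alpha+\gamma+\beta}(0)}(0)$, which you rightly flag as the main obstacle.
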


\begin{proof}
By main induction on $\alpha$ and side induction on $\beta$.
Let us start with observing the following.
First 
$N (\varphi \alpha \beta) = N (\alpha) + N(\beta) +1 \leq
 f^\gamma (0) + f^\gamma (0) +1 \leq f^\gamma (f^\gamma (0)) +1 \leq
 f^{F^{\Omega \cdot \alpha + \gamma + \beta} (0) +1} (0)$. 
Secondly 
$K_{\Omega} \varphi \alpha \beta = \{ \varphi \alpha \beta \} < F(0)$ 
since 
$K_{\Omega} \alpha \cup K_{\Omega} \beta < F(0)$.

\textsc{Case.}
The last rule is ($\bigwedge$):
In this case there exists a formula
$A \simeq \AND{\iota \in J} A_\iota \in \Gamma$ and for all $\iota \in J$
 there exists $\beta_\iota < \beta$ such that
$f^\gamma [N (\iota)], F[ \matho (\iota)] 
 \vdash^{\beta_\iota}_{\rho + \omega^\alpha} \Gamma, A_\iota$.
We observe that
$N (\alpha) \leq f^\gamma (0) \leq f [N (\iota)]^\gamma (0)$ 
and 
$K_{\Omega} \alpha < F (0) \leq F [\matho (\iota)] (0)$.
Hence Side Induction Hypothesis yields that for all $\iota \in J$
\begin{equation}
f[N (\iota)]^{F^{\Omega \cdot \alpha + \gamma + \beta_\iota} (0) +1}, 
F[ \matho (\iota)] 
 \vdash^{\varphi \alpha \beta_\iota}_{\rho} \Gamma, A_\iota.
\label{e:l:PCE:1}
\end{equation}
Let $m := N (\iota)$.
Then 
$f [m]^{F^{\Omega \cdot \alpha + \gamma + \beta_\iota} (0) +1} (0) \leq
 f^{F^{\Omega \cdot \alpha + \gamma + \beta_\iota} (0) +3}  [m] (0)$
from Corollary \ref{c:f[n][m]}.
Also it holds that 
$F^{\Omega \cdot \alpha + \gamma + \beta_\iota} (0) < 
 F^{\Omega \cdot \alpha + \gamma + \beta} (0)$ 
for all $\iota \in J$ since
$K_{\Omega} \beta_{\iota} < F[\matho (\iota)] (0) \leq F(0)$.
Further
\begin{eqnarray}
N (F^{\Omega \cdot \alpha + \gamma + \beta_\iota} (0) +3) &=&
 N (F(0)) + N (\alpha) + N (\gamma) + N (\beta_\iota) +4 
\nonumber\\
&\leq&
2 \cdot f^\gamma (0) + f^{F^\gamma (0)} (0) 
 + f^\gamma [m] (0) + 4 
\quad \text{ by Lemma \ref{lem:N(a)},}
\nonumber \\
&\leq& 
 f^{F^\gamma (0)} (f^\gamma (f^\gamma ( f^\gamma (m)))) + 4 
\nonumber \\
&\leq&
f^{F^\gamma (0)} (f^{\gamma +2} (m)) +4 \nonumber \\
&\leq&
f^{F^\gamma (0)} (f^{\gamma +2} (m)) + f^{\gamma +2}(0) \nonumber \\
&\leq&
f^{F^\gamma (0)} (f^{\gamma +2} (m) + f^{\gamma +2}(0)) \nonumber \\
&\leq&
f^{F^\gamma (0)} (f^{\gamma +3} (m)) \nonumber \\
&\leq&
f^{F^\gamma (0) +2} (m) \nonumber \\ 
&\leq&
 f^{F^{\Omega \cdot \alpha + \gamma + \beta (0)}} (m). 
\label{e:l:PCE:2}%\\
\end{eqnarray}
The last inequality holds since 
$N(F^\gamma (0) +2) = N(F(0)) + N(\gamma) +2$ %\leq
is bounded by
$f[N(F^{\Omega \cdot \alpha + \gamma + \beta (0)})] (m)$.
%\comments{Confirm the last inequality.}
Hence
\begin{eqnarray}
f^{F^{\Omega \cdot \alpha + \gamma + \beta_\iota} (0) +3} (m) &\leq&
f^{F^{\Omega \cdot \alpha + \gamma + \beta_\iota} (0) +3}
  (f^{F^{\Omega \cdot \alpha + \gamma + \beta} (0)} (m)) 
\nonumber \\
&\leq&
f^{F^{\Omega \cdot \alpha + \gamma + \beta} (0)}
  (f^{F^{\Omega \cdot \alpha + \gamma + \beta} (0)} (m)) 
\quad \text{ by (\ref{e:l:PCE:2}),} 
\nonumber \\
&\leq&
 f^{F^{\Omega \cdot \alpha + \gamma + \beta} (0) +1} (m). 
\nonumber
\end{eqnarray}
This together with (\ref{e:l:PCE:1}) allows us to derive the sequent
\begin{equation*}
f^{F^{\Omega \cdot \alpha + \gamma + \beta} (0) +1} [N (\iota)], 
 F[ \matho (\iota)] 
 \vdash^{\varphi \alpha \beta_\iota}_{\rho} \Gamma, A_\iota.
\end{equation*}
An application of ($\bigwedge$) yields
$f^{F^{\Omega \cdot \alpha + \gamma + \beta} (0) +1}, F
 \vdash^{\varphi \alpha \beta}_{\rho} \Gamma, A$.

\textsc{Case.}
The last rule is $(\mathsf{Cut})$:
In this case there exist a formula $C$ and an ordinal
$\beta_0 < \beta$  such that
$\rk (C) < \rho + \omega^\alpha$, 
$\max \{ \lh (C), N (\max \kPO (C)), N (\max \kSO (C)) \}$ $\leq f^\gamma (0)$,
$\kO (C) < F(0)$,
\begin{equation*}
f^\gamma, F \vdash^{\beta_0}_{\rho + \omega^\alpha}
 \Gamma, C
\quad \text{ and } \quad
f^\gamma, F \vdash^{\beta_0}_{\rho + \omega^\alpha}
 \Gamma, \neg C.
\end{equation*}
SIH yields
$f^{F^{\Omega \cdot \alpha + \gamma + \beta_0} (0) +1}, F
 \vdash^{\varphi \alpha \beta_0}_{\rho} \Gamma, C$
and
$f^{F^{\Omega \cdot \alpha + \gamma + \beta_0} (0) +1}, F
 \vdash^{\varphi \alpha \beta_0}_{\rho} \Gamma, \neg C$.
If $\rk (C) < \rho$, then we can apply ($\mathsf{Cut}$), having the conclusion.
Suppose that 
$\rho \leq \rk (C) < \rho + \omega^{\alpha}$.
Then there exist $l < \omega$ and $\alpha_1, \dots, \alpha_l$ such
 that $\alpha_l \leq \cdots \leq \alpha_1 < \alpha $ and 
$\rk (C) = \rho + \omega^{\alpha_1} + \cdots + \omega^{\alpha_l}$.
%We can assume that $\alpha_l \neq 0$.
%Hence by Lemma \ref{lem:kPi}.\ref{lem:kPi:3} 
%$N (\alpha_1) \leq N (\rk (C)) \leq N (\kPS (C)) \leq f(0)$.
Let 
$\gamma' := F^{\Omega \cdot \alpha + \gamma + \beta_0} (0) +2$.
Then it is easy to observe that
$f^{F^{\Omega \cdot \alpha + \gamma + \beta_0} (0) +1}
 (f^{F^{\Omega \cdot \alpha + \gamma + \beta_0} (0) +1} (m)) \leq
 f^{\gamma'} (m)$
for all $m$.
This together with Cut-reduction lemma (Lemma \ref{lem:cut-red}) yields 
\begin{equation}
f^{\gamma'}, F
 \vdash^{\varphi \alpha \beta_0 + \varphi \alpha \beta_0
        }_{\rho + \omega^{\alpha_1} \cdot l}
 \Gamma.
\label{e:l:PCE:3}
\end{equation}
Let us define ordinals $\xi_n$ and $\gamma_n$ by
\[
 \left\{
 \begin{array}{rcl}
 \xi_0 &=& \varphi \alpha \beta_0 + \varphi \alpha \beta_0, \\
 \xi_{n+1} &=& \varphi \alpha_1 \xi_n,   
 \end{array}
 \right.
\quad
 \left\{
 \begin{array}{rcl}
 \gamma_0 &=& \gamma' = 
 F^{\Omega \cdot \alpha + \gamma + \beta_0} (0) +2, \\
 \gamma_{n+1} &=& F^{\Omega \cdot \alpha_1 + \gamma_n + \xi_n} (0) +1. 
 \end{array}
 \right.
\]

\begin{claim}
%\begin{equation}
$f^{\gamma_n}, F \vdash^{\xi_n}_{\rho + \omega^{\alpha_1} \cdot (l-n)}
\Gamma$.
$(0 \leq n \leq l)$
%\label{eq:PCE}
%\end{equation}
\end{claim}

We show the claim by subsidiary induction on $n \leq l$.
The base case follows immediately from (\ref{e:l:PCE:3}).
For the inductions step suppose $n < l$.
Then by IH we have
$f^{\gamma_n}, F 
 \vdash^{\xi_n}_{\rho + \omega^{\alpha_1} (l-(n+1)) + \omega^{\alpha_1}}
\Gamma$. 
%Then we show
%$f^{\gamma_{n+1}}, F 
% \vdash^{\xi_{n+1}}_{\rho + \omega^{\alpha_1} (l+1-(n+1))}\Gamma$. 
It is easy to see that
$\{ \alpha_1, \xi_n, \gamma_n \} < \Omega$ and that
$\gamma < \gamma_m$ and 
$N (\gamma) \leq N (\gamma_m)$ for all $m \leq l$.
Hence
\[
 \left\{
 \begin{array}{l}
 N (\alpha_1) \leq N (\rk (C)) \leq f^\gamma (0) \leq
 f^{\gamma_n} (0), \\
 K_{\Omega} \alpha_1 \subseteq K_{\Omega} \alpha < F(0). 
 \end{array} 
 \right.
\]
Thus MIH of the lemma yields
$f^{\gamma_{n+1}}, F 
 \vdash^{\xi_{n+1}}_{\rho + \omega^{\alpha_1} (l-(n+1))}
\Gamma$. 
\qed

By the claim with $n = l$ we have
$f^{\gamma_{l}}, F \vdash^{\xi_{l}}_{\rho} \Gamma$. 
One can show
$\xi_n < \varphi \alpha \beta$ by a straightforward induction on $n$.
Hence
$\xi_{l} < \varphi \alpha \beta$.
It remains to show that 
$f^{\gamma_{l}} (0) \leq 
 f^{F^{\Omega \cdot \alpha + \gamma + \beta} (0) +1} (0)$.
It is not difficult to check
$\gamma_{l} < F^{\Omega \cdot \alpha + \gamma + \beta} (0) +1$.
By simultaneous induction on $n$ we show the following
(\ref{eq:N(xi_n)}) and (\ref{eq:N(gamma_n)}):
\begin{eqnarray}
N (\xi_n) &\leq&
n N (\alpha_1) + 2 N (\alpha) + 2 N (\beta_0) +2 + n, 
\label{eq:N(xi_n)}
\\
N (\gamma_n) &\leq&
(n+1) N (F(0)) + \frac{1}{2}n(n+1) N (\alpha_1) 
\nonumber \\
&&
+ (2n +1) N (\alpha) + N (\gamma) +
(2n +1) N (\beta_0) + 4 (n+1).
\label{eq:N(gamma_n)}
\end{eqnarray}
For the base case
\begin{eqnarray*}
N (\xi_0) &\leq& 2 (N (\alpha) + N (\beta_0) +1) \leq
 2 N (\alpha) + 2 N (\beta_0) +2, \\
N (\gamma_0) &\leq& 
 N (F(0)) + N (\alpha) + N (\gamma) + N (\beta_0) +4.
\end{eqnarray*}
Let us consider the induction step.
Assuming (\ref{eq:N(xi_n)}),
\begin{eqnarray*}
N (\xi_{n+1}) &=& N (\alpha_1) + N (\xi_n) +1 \\
&\leq&
 (n+1) N (\alpha_1) + 2 N (\alpha) + 2 N (\beta_0) +2 + n+1.
\end{eqnarray*}
Assuming both (\ref{eq:N(xi_n)}) and (\ref{eq:N(gamma_n)}),
\begin{eqnarray*}
N (\gamma_{n+1}) &=&
 N (F(0)) + N (\alpha_1) + N (\gamma_n) + N (\xi_n) + 4 \\
&\leq&
 (n+2) N (F(0)) + (\frac{1}{2} n (n+1) + n+1) N (\alpha_1) \\
&& +
 (2n +3) N (\alpha) + N (\gamma) + (2n +3) N (\beta_0) +
 4 (n+1) +4 \\
&\leq&
 (n+2) N (F(0)) + \frac{1}{2} (n+1)(n+2) N (\alpha_1) \\
&& +
 (2n +3) N (\alpha) + N (\gamma) + (2n +3) N (\beta_0) +
 4 (n+2).
\end{eqnarray*}
Let us observe that
\begin{eqnarray}
N (\rk (C)) &\leq&
N(\max \kPS (C)) + \lh (C)
\quad \text{ by Lemma \ref{lem:kPi}.\ref{lem:kPi:3},} 
\nonumber \\
&\leq&
f^\gamma (0) + f^\gamma (0) \quad 
\text{ since } \lh (C)  \leq f^\gamma (0),
\nonumber \\
&\leq&
f^\gamma (f^\gamma (0)).
\label{e:l:PCE:6}
\end{eqnarray}
Hence 
$l \leq \rk (C) \leq f^\gamma (f^\gamma (0)) \leq f^{F^\gamma (0)} (0)$
since $\gamma < F^\gamma (0)$ and 
$N (\gamma) \leq N (F^\gamma (0))$.
Further
$\max \{ F(0), N(\alpha), N(\beta_0) \} \leq f^\gamma (0) \leq
 f^{F^\gamma (0)}$ 
by assumption and 
$N(\gamma) \leq f^{F^\gamma (0)} (0)$
by Lemma \ref{lem:N(a)}.
From these and (\ref{eq:N(gamma_n)}),
\begin{equation}
N(\gamma_l) \leq
(f^{F^\gamma (0)} (0))^3 + 6 (f^{F^\gamma (0)} (0))^2 +
8 \cdot f^{F^\gamma (0)} (0) + 4.
\label{e:l:PCE:7}
\end{equation}
On the other hand, from Example \ref{ex:1}, one can see
 that 
$m^3 + 6 m^2 + 8 m + 4 \leq f^{F^\gamma (0)} (m)$
holds.
Hence by (\ref{e:l:PCE:7}),
\begin{equation}
N(\gamma_l) \leq f^{F^\gamma (0)} (f^{F^\gamma (0)} (0)) \leq %$
 f^{F^{\Omega \cdot \alpha + \gamma + \beta} (0)} (0) \leq
f %[N(F^{\Omega \cdot \alpha + \gamma + \beta} (0))] 
  (f^{F^{\Omega \cdot \alpha + \gamma + \beta} (0)} (0)).
\label{e:l:PCE:8}
\end{equation}
%\comments{Assume suitable conditions on $f$.}
Hence
\begin{eqnarray*}
f^{\gamma_{l}} (0) &\leq& 
f^{\gamma_{l}}
 (f^{F^{\Omega \cdot \alpha + \gamma + \beta} (0)} (0)) \\
&\leq& 
 f^{F^{\Omega \cdot \alpha + \gamma + \beta} (0)}
   (f^{F^{\Omega \cdot \alpha + \gamma + \beta} (0)}) 
\quad \text{ by (\ref{e:l:PCE:8}),} \\
&\leq&
 f^{F^{\Omega \cdot \alpha + \gamma + \beta} (0) +1} (0).
\end{eqnarray*}
This allows us to conclude
$f^{F^{\Omega \cdot \alpha + \gamma + \beta} (0) +1}, F
 \vdash^{\varphi \alpha \beta}_{\rho} \Gamma$.
\qed
\end{proof}

\begin{definition}
For each $\Lstar$-formula $B$ let $B^\alpha$ be the result of
 replacing in $B$ every occurrence of $\itP{\ofA}{\Omega}$ by
$\itP{\ofA}{\alpha}$. 
%Accordingly let $\Gamma [\alpha, \beta] := \{ B [\alpha, \beta] \mid B \in \Gamma \}$.
\end{definition}

\begin{lemma}[Boundedness]
\label{lem:reflection}
Assume that
$f, F \vdash^{\alpha}_{\rho} \Gamma, A$.
Then for all $\xi$ if $\alpha \leq \xi \leq F (0)$,
$N (\xi) \leq f(0)$ and 
$K_{\Omega} \xi < F(0)$, then
$f, F \vdash^{\alpha}_{\rho} \Gamma, A^\xi$.
\end{lemma}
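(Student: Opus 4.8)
The plan is to argue by induction on $\alpha$, equivalently on the derivation witnessing $f, F \vdash^{\alpha}_{\rho} \Gamma, A$, distinguishing cases according to its last inference. Three preliminary observations: if $A$ contains no occurrence of any predicate $\itP{\ofA}{\Omega}$ then $A^{\xi} \equiv A$ and nothing is to be shown; the case $\xi = \Omega$ is likewise trivial; and since $\xi \le F(0) < \Omega$ we have $\xi \in \OT \seg \Omega$, so $A^{\xi}$ is a genuine $\Lstar$-formula, while $\alpha \le \xi < \Omega$ rules out $(\Clrule)$ as the last rule, it being subject to $\Omega < \alpha$. So from now on $0 < \xi < \Omega$.

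Whenever the last rule does not have $A$ as its principal formula --- the two axioms, $(\mathsf{Cut})$, or $(\bigvee)/(\bigwedge)$ applied to some other member of $\Gamma$ --- the formula $A$ survives unchanged into every premise, and I would simply apply the induction hypothesis to each premise, keeping the same $\xi$, and re-apply the rule. One only has to check that $\xi$ is still admissible for the premises: the $(\bigwedge)$-rule passes to the operators $f[N(\iota)], F[\matho(\iota)]$ and lowers the height to $\alpha_{\iota} < \alpha$, but $f[N(\iota)](0) = f(N(\iota)) \ge f(0)$, $F[\matho(\iota)](0) \ge F(0)$ and $\alpha_{\iota} < \alpha \le \xi$, so the requirements $\alpha_{\iota} \le \xi \le F[\matho(\iota)](0)$, $N(\xi) \le f[N(\iota)](0)$ and $K_{\Omega}\xi < F[\matho(\iota)](0)$ are inherited from the hypotheses on $\xi$ relative to $(f, F, \alpha)$; the other rules do not change the operators at all. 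The side conditions of the re-applied rule mention only $\Gamma$, the untouched principal or cut formula, and $\hyp{}{}(f;F;\alpha)$ of the conclusion is the one we started with, so this block of cases is routine.

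The substance of the proof is the case in which the last rule is $(\bigvee)$ or $(\bigwedge)$ with principal formula $A$ --- the only way $A^{\xi}$ can differ structurally from what the premises deliver. If $A \simeq \bigvee_{\iota \in J} A_{\iota}$ was used at index $\iota_{0}$ with premise $f, F \vdash^{\alpha_{0}}_{\rho} \Gamma, A, A_{\iota_{0}}$, then the rule supplies $\matho(\iota_{0}) < \min\{\alpha, F(0)\}$, hence $\matho(\iota_{0}) < \alpha \le \xi$; applying the induction hypothesis to the premise --- once, or twice when the $\Omega$-predicate is nested inside the chosen disjunct $A_{\iota_{0}}$, as for $\vee$- and $\exists$-formulas --- yields $f, F \vdash^{\alpha_{0}}_{\rho} \Gamma, A^{\xi}, (A_{\iota_{0}})^{\xi}$. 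Using that $\itP{\ofA}{\xi} t \simeq \bigvee_{\eta < \xi} \ofA(\itP{\ofA}{\eta}, t)$ and that each $\ofA(\itP{\ofA}{\eta}, t)$, since $\ofA$ is an $\LPA(X)$-formula, carries no $\Omega$-superscript, the formula $(A_{\iota_{0}})^{\xi}$ is precisely the $\iota_{0}$-th component of $A^{\xi}$, so one application of $(\bigvee)$ at the unchanged index $\iota_{0}$ gives $f, F \vdash^{\alpha}_{\rho} \Gamma, A^{\xi}$. The $(\bigwedge)$-subcase is dual: e.g.\ for $A \equiv \neg\itP{\ofA}{\Omega} t$ one retains only the premises of index $< \xi$ to assemble $A^{\xi} \equiv \neg\itP{\ofA}{\xi} t \simeq \bigwedge_{\eta < \xi} \neg\ofA(\itP{\ofA}{\eta}, t)$, and the operator-control requirements of the new $(\bigwedge)$-inference, $N(\max \kPO(A^{\xi})) \le f(0)$ and $\kPO(A^{\xi}) < F(0)$, hold because $\kPO(A^{\xi}) \subseteq \kPO(A) \cup \{\xi\}$ with $\kPO(A) < F(0)$ coming from the original rule, while our hypotheses give $N(\xi) \le f(0)$ and $K_{\Omega}\xi = \{\xi\} < F(0)$ (the latter since $\xi < \Omega$).

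I expect the real work to be bookkeeping rather than conceptual: one must verify that the substitution $B \mapsto B^{\xi}$ commutes correctly with $\simeq$-unfolding --- so that $(A_{\iota})^{\xi}$ genuinely is a component of $A^{\xi}$ --- and that every numeric and ordinal side condition of the operator-controlled calculus survives the substitution. This is exactly what dictates the three hypotheses on $\xi$: $\alpha \le \xi$ keeps the chosen $\bigvee$-indices available, $N(\xi) \le f(0)$ and $K_{\Omega}\xi < F(0)$ feed the $\kPO$-conditions of the $(\bigwedge)$-rule, and $\xi \le F(0) < \Omega$ keeps $A^{\xi}$ within $\Lstar$.
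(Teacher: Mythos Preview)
Your proposal is correct and follows essentially the same inductive strategy as the paper's proof. One small difference worth noting: the paper still writes out the $(\Clrule)$ case explicitly (applying the induction hypothesis twice to the premise and then re-deriving via $(\bigvee)$), whereas your observation that the non-vacuous hypothesis forces $\alpha \le F(0) < \Omega$, contradicting the side condition $\Omega < \alpha$ of $(\Clrule)$, legitimately renders that case empty and streamlines the argument.
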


\begin{proof}
The claim is trivial if $F (0) < \alpha$.
Assume that $\alpha \leq F (0)$ and 
$f, F \vdash^{\alpha}_{\rho} \Gamma, A$.
By induction on $\alpha$ we show that for all $\xi$ if 
$\alpha \leq \xi \leq F (0)$, then
$f, F \vdash^{\alpha}_{\rho} \Gamma, A^\xi$.

\textsc{Case.}
The last rule is ($\bigvee$):
If $A$ is not the principal formula of last rule ($\bigvee$), then the claim follows
 immediately from IH.
Suppose that $A \simeq \OR{\iota \in J} A_\iota$ is the principal formula of
 ($\bigvee$). 
Then there exist $\alpha_0 < \alpha$ and $\iota_0 \in J$ such that
$\matho (\iota_0) < \alpha$ and 
$f, F \vdash^{\alpha_0}_\rho \Gamma, A, A_{\iota_0}$.
Let $\alpha \leq \xi \leq F (0)$.
Then IH yields 
$f, F \vdash^{\alpha_0}_\rho \Gamma, A^\xi, A_{\iota_0}$.
If $A \not\equiv \itP{\ofA}{\Omega} t$, then another application of IH and an
 application of ($\bigvee$) yield
$f, F \vdash^{\alpha}_\rho \Gamma, A^\xi$.
Consider the case that 
$A \equiv \itP{\ofA}{\Omega} t \simeq 
 \OR{\mu < \Omega} \ofA (\itP{\ofA}{\mu}, t)$.
In this subcase $A_{\mu_0} \simeq \ofA (\itP{\ofA}{\mu_0}, t)$.
Since $\mu_0 = \matho (\mu_0) < \alpha \leq \xi$, we can apply ($\bigvee$) and then
 obtain $f, F \vdash^\alpha_\rho \Gamma, \itP{\ofA}{\xi}$.

\textsc{Case.}
The last rule is ($\bigwedge$):
In this case for all $\iota \in J$ there exists 
$\alpha_\iota < \alpha$ such that 
$f[N (\iota)], F [\matho (\iota)] 
 \vdash^{\alpha_\iota}_{\rho} \Gamma'$ for a certain $\Gamma'$.
Let us observe that 
$F (0) \leq F [\matho (\iota)] (0)$.
Hence, if $A$ is not the principal formula of ($\bigwedge$), then the
 claim follows immediately from IH.
Suppose that 
$A$ is the principal formula of ($\bigwedge$).
Then $A \simeq \AND{\iota \in J} A_\iota \in \Gamma$ and 
$\Gamma' \equiv \Gamma, A, A_\iota$.
Let $\alpha \leq \xi \leq F (0)$.
Then IH yields 
$f[N (\iota)], F [\matho (\iota)] 
 \vdash^{\alpha_\iota}_{\rho} \Gamma, A^\xi, A_\iota$.
If $A \not\equiv \neg \itP{\ofA}{\Omega} t$, then another application of IH and an
 application of ($\bigwedge$) yield
$f, F \vdash^\alpha_\rho \Gamma, A^\xi$.
If 
$A \equiv \neg \itP{\ofA}{\Omega} t \simeq 
 \AND{\mu < \Omega} \neg \ofA (\itP{\ofA}{\mu}, t)$, then an application of
 ($\bigwedge$) with $\mu \leq \xi \leq F (0) < \Omega$ yields  
$f, F \vdash^\alpha_\rho \Gamma, \neg \itP{\ofA}{\xi}$.

\textsc{Case.}
The last rule is ($\Clrule$):
If $A$ is not the principal formula, then the claim again follows from
 IH.
Let us consider the case that $A$ is the principal formula of the last
 rule ($\Clrule$) with a premise 
$f, F \vdash^{\alpha_0}_{\rho} \Gamma, \itP{\ofA}{\Omega} t, 
 \ofA (\itP{\ofA}{\Omega}, t)$
for some $\alpha_0 < \alpha$ where $A \equiv \itP{\ofA}{\Omega} t$.
Let $\alpha \leq \xi \leq F (0)$.
An application of IH yields
$f, F \vdash^{\alpha_0}_{\rho} \Gamma, 
 \itP{\ofA}{\xi}, \ofA (\itP{\ofA}{\Omega}, t)$.
Another application of IH yields
$f, F \vdash^{\alpha_0}_{\rho} \Gamma, 
 \itP{\ofA}{\xi}, \ofA (\itP{\ofA}{\alpha_0}, t)$.
Let us observe that 
$\matho (\alpha_0) = \alpha_0 < \alpha$,
$N (\alpha_0) \leq f (0)$, and 
$\matho (\alpha_0) = \alpha_0 < \alpha \leq F (0)$.
Hence we can apply ($\bigvee$) with $\alpha_0 < \alpha \leq \xi$,
 concluding $f, F \vdash^\alpha_\rho \Gamma, \itP{\ofA}{\xi}$.
\qed
\end{proof}

We will write 
$f, F \cvdash{\alpha} \Gamma$ instead of
$f, F \vdash^\alpha_{\alpha} \Gamma$.

\begin{lemma}[Impredicative Cut-elimination]
\label{lem:ICE}
\ \\
If $f, F \vdash^{\alpha}_{\Omega +1} \Gamma$, then
$f^{F^{\alpha} (0) +1}, F^{\alpha +1}
 \cvdash{F^\alpha (0)} \Gamma$.
\end{lemma}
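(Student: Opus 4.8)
The plan is to argue by induction on $\alpha$, with a case distinction on the last inference of a derivation witnessing $f,F\vdash^{\alpha}_{\Omega+1}\Gamma$. We may assume $\alpha>0$ (the case $\alpha=0$ being covered by the axioms), so that $F^{\alpha}(0)$ is an $\varepsilon$-number, immediate from its definition, with $F(0)<F^{\alpha}(0)<\Omega$. Throughout, one repeatedly has to re-establish side conditions of the form $\hyp{}{}(f^{F^{\alpha}(0)+1};F^{\alpha+1};F^{\alpha}(0))$ for the conclusion, and to keep track of the controlling bounds on $N$, $\lh$, $\kPO$, $\kSO$ and $\kO$; all of this reduces --- exactly as in the proofs of Lemma \ref{l:CE1} and Lemma \ref{lem:PCE} --- to Corollary \ref{c:f[n][m]} and Lemmas \ref{lem:F[xi]}, \ref{lem:F^a^b}, \ref{lem:N(a)}, to elementary arithmetic of the norm and of $K_{\Omega}$, to Proposition \ref{p:F^a(eta)}, and to the obvious monotonicity of $\vdash$ in its ordinal and cut-rank and of $f^{\bullet},F^{\bullet}$ in the exponent under the evident norm/coefficient hypotheses. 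I shall take these for granted below.

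For the axioms $(\mathsf{Ax}1)$, $(\mathsf{Ax}2)$ there is nothing to do but this bookkeeping. For $(\bigvee)$ one applies the induction hypothesis to the premise at $\alpha_{0}<\alpha$, obtaining $f^{F^{\alpha_{0}}(0)+1},F^{\alpha_{0}+1}\cvdash{F^{\alpha_{0}}(0)}\Gamma,A_{\iota_{0}}$; since $F^{\alpha_{0}}(0)<F^{\alpha}(0)$ and the witness condition $\matho(\iota_{0})<F(0)\le F^{\alpha}(0)$ is retained, enlarging the operators and the cut-rank and re-applying $(\bigvee)$ yields $\cvdash{F^{\alpha}(0)}\Gamma$. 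The case $(\bigwedge)$ is the analogue of the Claim inside the proof of Lemma \ref{l:CE1}: the induction hypothesis turns the premises $f[N(\iota)],F[\matho(\iota)]\vdash^{\alpha_{\iota}}_{\Omega+1}\Gamma,A_{\iota}$ into derivations controlled by $(f[N(\iota)])^{(F[\matho(\iota)])^{\alpha_{\iota}}(0)+1}$ and $(F[\matho(\iota)])^{\alpha_{\iota}+1}$ at ordinals $(F[\matho(\iota)])^{\alpha_{\iota}}(0)\le F^{\alpha_{\iota}}(\matho(\iota))$, and --- using $\matho(\iota)<F(0)$ (Lemma \ref{lem:kPi}.\ref{lem:kPi:2}) when the principal formula is not $\neg\itP{\ofA}{\Omega}t$, and the same use of Boundedness as in the proof of Lemma \ref{lem:reflection} when it is --- one checks that these are dominated by $f^{F^{\alpha}(0)+1}[N(\iota)]$ and $F^{\alpha+1}[\matho(\iota)]$ at ordinals $<F^{\alpha}(0)$ (here Proposition \ref{p:F^a(eta)} and the monotonicity of $F^{\bullet}$ enter), whence a fresh $(\bigwedge)$ closes the case. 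For $(\Clrule)$, from the premise $f,F\vdash^{\alpha_{0}}_{\Omega+1}\Gamma,\ofA(\itP{\ofA}{\Omega},t)$ with $\itP{\ofA}{\Omega}t\in\Gamma$, apply the induction hypothesis and then Boundedness (Lemma \ref{lem:reflection}) with $\xi=F^{\alpha_{0}}(0)$ --- legitimate since $F^{\alpha_{0}}(0)\le F^{\alpha_{0}}(0)\le F^{\alpha_{0}+1}(0)$, $N(F^{\alpha_{0}}(0))\le f^{F^{\alpha_{0}}(0)+1}(0)$ and $K_{\Omega}F^{\alpha_{0}}(0)<F^{\alpha_{0}+1}(0)$; since $\itP{\ofA}{\Omega}$ occurs positively in $\ofA(\itP{\ofA}{\Omega},t)$ this gives $f^{F^{\alpha_{0}}(0)+1},F^{\alpha_{0}+1}\cvdash{F^{\alpha_{0}}(0)}\Gamma,\ofA(\itP{\ofA}{F^{\alpha_{0}}(0)},t)$, and as $\ofA(\itP{\ofA}{F^{\alpha_{0}}(0)},t)$ is the disjunct of $\itP{\ofA}{\Omega}t\simeq\OR{\mu<\Omega}\ofA(\itP{\ofA}{\mu},t)\in\Gamma$ indexed by $\mu=F^{\alpha_{0}}(0)$, an application of $(\bigvee)$ with $\iota_{0}=F^{\alpha_{0}}(0)<F^{\alpha}(0)$ concludes --- in effect $(\Clrule)$ collapses into a $(\bigvee)$.

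The decisive case is $(\mathsf{Cut})$, with cut formula $C$, premises at $\alpha_{0}<\alpha$, $\kO(C)<F(0)$ and $\rk(C)\le\Omega$. If $\rk(C)<\Omega$, then by Lemma \ref{lem:kPi}.\ref{lem:kPi:4} $C$ is not $\itP{\ofA}{\Omega}t$ or its negation, and by Lemma \ref{lem:kPi}.\ref{lem:kPi:3} together with $\kO(C)<F(0)$ one gets $\rk(C)<F^{\alpha}(0)$ (using that $F^{\alpha}(0)$ is an $\varepsilon$-number above $F(0)$); the induction hypothesis applied to both premises, followed by a renewed cut --- now a \emph{legitimate} one, of rank $<F^{\alpha}(0)$ --- settles this subcase. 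If $\rk(C)=\Omega$, then by Lemma \ref{lem:kPi}.\ref{lem:kPi:4} we may take $C\equiv\itP{\ofA}{\Omega}t$ (the case $C\equiv\neg\itP{\ofA}{\Omega}t$ is symmetric since $\rk(\neg C)=\rk(C)$), with premises $f,F\vdash^{\alpha_{0}}_{\Omega+1}\Gamma,\itP{\ofA}{\Omega}t$ and $f,F\vdash^{\alpha_{0}}_{\Omega+1}\Gamma,\neg\itP{\ofA}{\Omega}t$. Apply the induction hypothesis to both; to the positive one apply Boundedness with $\xi=F^{\alpha_{0}}(0)$, giving $f^{F^{\alpha_{0}}(0)+1},F^{\alpha_{0}+1}\cvdash{F^{\alpha_{0}}(0)}\Gamma,\itP{\ofA}{F^{\alpha_{0}}(0)}t$; to the negative one apply Inversion (Lemma \ref{lem:inversion}), giving $f^{F^{\alpha_{0}}(0)+1}[N(\mu)],F^{\alpha_{0}+1}[\mu]\vdash^{F^{\alpha_{0}}(0)}_{F^{\alpha_{0}}(0)}\Gamma,\neg\ofA(\itP{\ofA}{\mu},t)$ for every $\mu<\Omega$, hence for every $\mu<F^{\alpha_{0}}(0)$, and re-assemble these by $(\bigwedge)$ into $f^{F^{\alpha_{0}}(0)+1},F^{\alpha_{0}+1}\vdash^{F^{\alpha_{0}}(0)+1}_{F^{\alpha_{0}}(0)}\Gamma,\neg\itP{\ofA}{F^{\alpha_{0}}(0)}t$. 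Now $\rk(\itP{\ofA}{F^{\alpha_{0}}(0)}t)=\omega\cdot F^{\alpha_{0}}(0)=F^{\alpha_{0}}(0)<F^{\alpha}(0)$, so a single application of $(\mathsf{Cut})$ on $\itP{\ofA}{F^{\alpha_{0}}(0)}t$ --- again of rank below the target cut-rank --- produces $f^{F^{\alpha_{0}}(0)+1},F^{\alpha_{0}+1}\vdash^{F^{\alpha_{0}}(0)+2}_{F^{\alpha}(0)}\Gamma$, and enlarging the operators to $f^{F^{\alpha}(0)+1},F^{\alpha+1}$ and the ordinal to $F^{\alpha}(0)$ gives the conclusion.

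I expect the $\rk(C)=\Omega$ subcase of $(\mathsf{Cut})$ --- together with its companion, the subcase of $(\bigwedge)$ in which the principal formula is $\neg\itP{\ofA}{\Omega}t$ --- to be the main obstacle: one has to see that a single impredicative step at level $\Omega$ can be traded, via Boundedness on the $\bigvee$-occurrence and Inversion plus a re-assembly by $(\bigwedge)$ on the $\bigwedge$-occurrence, for an ordinary cut of rank $F^{\alpha_{0}}(0)<F^{\alpha}(0)$, while keeping all controlling conditions in force; the numerical inequalities needed are instances of Corollary \ref{c:f[n][m]} and Lemmas \ref{lem:F[xi]}, \ref{lem:F^a^b}, \ref{lem:N(a)}, handled just as in the proofs of Lemmas \ref{l:CE1} and \ref{lem:PCE}.
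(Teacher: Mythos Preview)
Your proposal is correct and follows essentially the same approach as the paper: induction on $\alpha$, with the decisive $(\mathsf{Cut})$ subcase $\rk(C)=\Omega$ handled by applying Boundedness to the $\itP{\ofA}{\Omega}t$-premise and Inversion to the $\neg\itP{\ofA}{\Omega}t$-premise, then cutting on $\itP{\ofA}{F^{\alpha_0}(0)}t$ whose rank $F^{\alpha_0}(0)$ lies below $F^\alpha(0)$. You are slightly more explicit than the paper---you spell out the $(\Clrule)$ case (collapsing it to a $(\bigvee)$ via Boundedness) and unpack the passage from $\neg\itP{\ofA}{\Omega}t$ to $\neg\itP{\ofA}{F^{\alpha_0}(0)}t$ as Inversion followed by a $(\bigwedge)$-reassembly, where the paper compresses this into a single appeal to ``Inversion''---but the strategy is identical.
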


\begin{proof}
By induction on $\alpha$.
%Let $K$ denote the set $\kPO (\Gamma)$. 
It is easy to check that 
$f(0) \leq f^{F^\alpha (0) +1} (0)$ and 
$F (0) \leq F^\alpha (0)$.
It also holds that 
$K_{\Omega} F^\alpha (0) = \{ F^\alpha (0) \} < F^{\alpha+1} (0)$.
Further, 
\begin{eqnarray*}
N (F^{\alpha +1} (0)) = N (F(0)) + N (\alpha) +1 &\leq& 
 f(0) + f (0) +1 \\
&\leq& f(f(0)) +1 \\
&\leq& 
 f^{F^\alpha (0) +1} (0).
\end{eqnarray*}
And hence 
$N(F^\alpha (0)) < N(F^{\alpha +1} (0)) \leq f^{F^\alpha (0) +1} (0)$ 
in particular.
Let ($\mathcal J$) denote the last rule that forms
$f, F \vdash^{\alpha}_{\Omega +1} \Gamma$.

\textsc{Case.}
($\mathcal J$) is ($\mathsf{Cut}$) with a cut formula $C$: In this case ($\mathcal J$) has two premises
$f, F \vdash^{\alpha_0}_{\Omega +1} \Gamma, C$ and
$f, F \vdash^{\alpha_0}_{\Omega +1} \Gamma, \neg C$ for some 
$\alpha_0 < \alpha$.
IH yields that
\begin{eqnarray}
f^{F^{\alpha_0} (0) +1}, F^{\alpha_0 +1} 
&\cvdash{F^{\alpha_0} (0)}& \Gamma, C, 
\label{e:l:ICE:1} \\
f^{F^{\alpha_0} (0) +1}, F^{\alpha_0 +1} 
&\cvdash{F^{\alpha_0} (0)}& \Gamma, \neg C.
\label{e:l:ICE:2}
\end{eqnarray}
Let us observe that
$F^{\alpha_0} (0) < F^{\alpha} (0)$ since
$K_{\Omega} \alpha_0 < F (0) \leq F^{\alpha} (0)$.
Similarly 
$F^{\alpha_0 +1} (0) < F^{\alpha +1} (0)$ holds.
Further 
\begin{eqnarray*}
N (F^{\alpha_0} (0) +1) &=& N (F (0)) + N (\alpha_0) +1 \\
&\leq&
N (F^\alpha (0)) + f(0) +1
\qquad \text{ since } N (\alpha_0) \leq f(0), \\
&\leq& f(N (F^\alpha (0) +1))) =
 f [N (F^\alpha (0) +1)] (0).
\end{eqnarray*}
Hence $f^{F^{\alpha_0} (0) +1} (0) < f^{F^\alpha (0) +1} (0)$.

\textsc{Subcase}.
$\rk (C) < \Omega$.
By Lemma \ref{lem:kPi}.\ref{lem:kPi:3} 
$\rk (C) = \rk (\neg C) \leq 
 \omega \cdot (\max \kPO (\neg C)) + \lh (\neg C) < F(0)
$
since
$\kPO (\neg C) \subseteq \kO (C) < F(0)$.
Hence
$\rk (C) < F(0) \leq F^\alpha (0)$.
This together with the two sequents (\ref{e:l:ICE:1}) and
 (\ref{e:l:ICE:2}) allows us to deduce other two sequents 
$f^{F^{\alpha} (0) +1}, F^{\alpha +1} 
 \cvdash{F^{\alpha_0} (0)} \Gamma, C$ and
$f^{F^{\alpha} (0) +1}, F^{\alpha +1} 
\cvdash{F^{\alpha_0} (0)} \Gamma, \neg C$.
We can apply ($\mathsf{Cut}$) to these two sequents, concluding
$f^{F^{\alpha} (0) +1}, F^{\alpha +1} 
\cvdash{F^{\alpha} (0)} \Gamma$.

\textsc{Subcase.}
$\rk (C) = \Omega$.
In this case $C \equiv \itP{\ofA}{\Omega} t$ by Lemma \ref{lem:kPi}.\ref{lem:kPi:4}.
Let us observe the following.
\begin{enumerate}
\item
$N (F^{\alpha_0} (0)) = N (F(0)) + N (\alpha_0) \leq
 f(0) + f(0) \leq f(f(0)) \leq f^{F^{\alpha_0} (0)}$.
\item
$K_{\Omega} F^{\alpha_0} (0) = \{ F^{\alpha_0} (0) \} <
 F^{\alpha_0 +1} (0)$.
\end{enumerate}
Applying Boundedness lemma (Lemma \ref{lem:reflection}) to the sequent 
(\ref{e:l:ICE:1}) yields the sequent
$f^{F^{\alpha_0} (0)}, F^{\alpha_0 +1} 
 \cvdash{F^{\alpha_0} (0)}
 \Gamma, \itP{\ofA}{F^{\alpha_0} (0)}$.
As in the previous subcase this induces the sequent 
\begin{equation}
f^{F^{\alpha} (0) +1}, F^{\alpha +1} 
 \cvdash{F^{\alpha_0} (0)}
 \Gamma, \itP{\ofA}{F^{\alpha_0} (0)}.
\label{e:l:ICE:3} 
\end{equation}
On the other hand applying Inversion lemma (Lemma \ref{lem:inversion})
 to the sequent (\ref{e:l:ICE:2}) yields the sequent
\begin{equation*}
f^{F^{\alpha_0} (0) +1} [N (F^{\alpha_0} (0))], 
 F^{\alpha_0 +1} [F^{\alpha_0} (0)]
\cvdash{F^{\alpha_0} (0)}
\Gamma, \neg \itP{\ofA}{F^{\alpha_0} (0)}.
\end{equation*}
By Property 1 we can see that
$f^{F^{\alpha_0} (0) +1} [N (F^{\alpha_0} (0))] (0) \leq
 f^{F^{\alpha_0} (0) +1} (f^{F^{\alpha_0} (0)} (0))$ $\leq
 f^{F^{\alpha} (0) +1} (0)
$ 
and
$F^{\alpha_0 +1} [F^{\alpha_0} (0)] (0) \leq 
 F^{\alpha +1} (0)$. 
These observations induce the sequent
\begin{equation}
f^{F^{\alpha} (0) +1}, F^{\alpha +1}
\cvdash{F^{\alpha_0} (0)}
\Gamma, \neg \itP{\ofA}{F^{\alpha_0} (0)}.
\label{e:l:ICE:4}
\end{equation}
By definition 
$\rk (\itP{\ofA}{F^{\alpha_0} (0)}) =
 \rk (\neg \itP{\ofA}{F^{\alpha_0} (0)}) =
 F^{\alpha_0} (0) < F^{\alpha} (0)$.
Now by an application of ($\mathsf{Cut}$) to the two sequents
 (\ref{e:l:ICE:3}) and  (\ref{e:l:ICE:4}) we can derive the desired sequent
$f^{F^\alpha (0) +1}, F^{\alpha +1}
 \cvdash{F^\alpha (0)} \Gamma$.

\textsc{Case.}
($\mathcal J$) is ($\bigwedge$) with a principal formula
$A \simeq \AND{\iota \in J} A_{\iota} \in \Gamma$:
In this case 
$\forall \iota \in J$, $\exists \alpha_\iota < \alpha$ s.t.
$f [N (\iota)], F [\matho (\iota)] 
 \vdash^{\alpha_\iota}_{\Omega +1} \Gamma, A_\iota$.
IH yields the sequent 
\begin{equation*}
f [N (\iota)]^{F [\matho (\iota)]^{\alpha_\iota} (0) +1}, 
F [\matho (\iota)]^{\alpha_\iota +1} 
 \cvdash{F [\matho (\iota)]^{\alpha_\iota} (0)}
 \Gamma, A_\iota
\end{equation*}
for all $\iota \in J$.
In the same way as we showed the claim in the proof of Lemma \ref{l:CE1}
(p. \pageref{c:CE1}), one can show that
for all $\iota \in J$
\begin{eqnarray*}
 f [N (\iota)]^{F [\matho (\iota)]^{\alpha_\iota} (0) +1} (0) 
 &\leq&
 f^{F^\alpha (0) +1} [N (\iota)] (0), \\
 F [\matho (\iota)]^{\alpha_\iota +1} (0) &\leq&
 F^{\alpha +1} [\matho (\iota)] (0).
%\label{e:ICE:claim}
\end{eqnarray*}
These enable us to deduce the sequent
\begin{equation*}
f^{F^{\alpha} (0) +1} [N (\iota)], F^{\alpha +1} [\matho (\iota)] 
\vdash^{F [\matho (\iota)]^{\alpha_\iota} (0)}_{F^{\alpha} (0)}
 \Gamma, A_\iota
\end{equation*}
for all $\iota \in J$.
Since 
$F [\matho (\iota)]^{\alpha_\iota} (0) < F^\alpha (0)$
for all $\iota \in J$, we can apply
($\bigwedge$) to this sequent, concluding
$f^{F^{\alpha} (0) +1}, F^{\alpha +1}
 \cvdash{F^{\alpha} (0)} \Gamma$.
\qed
\end{proof}

\begin{lemma}[Witnessing]
\label{lem:witness}
For each $j < l$ let $B_j (x)$ be a 
$\Delta^0_0$-$\mathcal{L}_{\mathrm{PA}}$-formula such that 
$\mathsf{FV} (B_j (x)) = \{ x \}$.
Let
$\Gamma \equiv 
 \exists x_0 B_0 (x_0), \dots, 
 \exists x_{l-1} B_{l-1} (x_{l-1})$.
If $f, F \vdash^\alpha_0 \Gamma$ for some $\alpha \in \OT$, then
there exists a sequence
$\langle m_0, \dots, m_{l-1} \rangle$ of naturals such that 
$\max \{ m_j \mid j < l \}  \leq f(0)$ and
$% \models 
 B_0 (\ul{m_0}) \vee \cdots \vee B_{l-1} (\ul{m_{l-1}})$
is true in the standard model $\mathbb N$ of $\mathrm{PA}$.
%where for a natural $m$ we write $\ul{m}$ to denote the numeral
% corresponding to $m$.  
\end{lemma}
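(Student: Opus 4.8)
The plan is to prove, by induction on $\alpha$, the following slightly more general claim: if $f, F \vdash^{\alpha}_{0} \Gamma$ where $\Gamma$ consists of finitely many $\Delta^0_0$-$\mathcal{L}_{\mathrm{PA}}$-sentences $D_1, \dots, D_p$ together with formulas $\exists x_0 B_0(x_0), \dots, \exists x_{l-1} B_{l-1}(x_{l-1})$, each $B_j(x)$ being a $\Delta^0_0$-$\mathcal{L}_{\mathrm{PA}}$-formula with $\mathsf{FV}(B_j(x)) = \{x\}$, then there is a sequence $\langle m_0, \dots, m_{l-1}\rangle$ of naturals with $\max\{m_j \mid j < l\} \le f(0)$ for which $D_1 \vee \cdots \vee D_p \vee B_0(\ul{m_0}) \vee \cdots \vee B_{l-1}(\ul{m_{l-1}})$ is true in $\mathbb{N}$; the lemma is the case $p = 0$. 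The auxiliary $\Delta^0_0$-sentences are needed in the induction because unfolding one of the $\exists x_j B_j$ via $(\bigvee)$ produces a closed $\Delta^0_0$-instance that re-enters the sequent.

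First I would note that the last rule can be neither $(\mathsf{Cut})$, since $\rho = 0$ leaves no cut formula, nor $(\Clrule)$, since no $\itP{\ofA}{\Omega}$ occurs in a sequent of $\mathcal{L}_{\mathrm{PA}}$-sentences. If it is $(\mathsf{Ax}1)$ or $(\mathsf{Ax}2)$, then (using $\val(s) = \val(t)$ in the former) some $\mathcal{L}_{\mathrm{PA}}$-literal among the $D_i$ is true, so the disjunction holds already with $m_j := 0$ for all $j$ (legitimate as $f(0) \ge 1$). If it is $(\bigvee)$ with a principal $D_i$, its unfolding is again $\Delta^0_0$, and the induction hypothesis applied to the premise---same operator $f$, smaller ordinal---yields a sequence that still works, since the truth of the unfolded disjunct forces that of $D_i$. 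If the principal formula is some $\exists x_{j_0} B_{j_0}$, the side condition on $(\bigvee)$ gives a term $t_0$ with $\val(t_0) \le f(0)$ and a premise $\Gamma, B_{j_0}(t_0)$; applying the induction hypothesis (with $B_{j_0}(t_0)$ placed among the $\Delta^0_0$-sentences) either returns a sequence already making the disjunction over $\Gamma$ true, or exhibits $B_{j_0}(\ul{m_{j_0}})$ as true for $m_{j_0} := \val(t_0) \le f(0)$, and then we take $m_j := 0$ for $j \ne j_0$.

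The remaining and hardest case is $(\bigwedge)$, whose principal formula is necessarily a $\Delta^0_0$-conjunction or a bounded universal quantifier among the $D_i$, with all conjuncts $A_\iota$ again $\Delta^0_0$-$\mathcal{L}_{\mathrm{PA}}$-sentences. If that formula is true in $\mathbb{N}$ we are done with $m_j := 0$; if it is false, some conjunct $A_{\iota^{*}}$ is false and we invoke the induction hypothesis on the $\iota^{*}$-premise. The obstacle is that this premise is controlled by $f[N(\iota^{*})]$ rather than $f$, so a naive appeal only bounds the witnesses by $f(N(\iota^{*}))$; the crux is to argue that this enlargement is harmless, because the $(\bigvee)$-steps that actually contribute a witness---those whose instantiated disjunct $B_j(\ul m)$ comes out true---can only be applied while the controlling function is still the original $f$. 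Intuitively: the initial sequent carries no $\bigwedge$-formula, a $\bigwedge$-formula appears only after a $\Delta^0_0$-instance has been introduced, and so long as such an instance is false one may keep following a false conjunct without enlarging the operator. Once this is in place the $(\bigwedge)$-bookkeeping is routine, and assembling the finitely many local choices into a single $\langle m_0, \dots, m_{l-1}\rangle$ is immediate, as the disjunction needs just one true disjunct. I expect establishing this control of the operator across $(\bigwedge)$ to be the main obstacle.
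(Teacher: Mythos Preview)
Your strengthening of the induction hypothesis to allow side $\Delta^0_0$-sentences is exactly the right move---the paper's own proof tacitly does the same when it applies ``IH'' to the sequent $\Gamma, B_{l-1}(t)$---and you correctly isolate the $(\bigwedge)$ rule as the only delicate case. But your proposed resolution does not work. You suggest that one can always follow a false conjunct while keeping the operator at $f$; however, for a binary conjunction $A_{\ul{0}}\wedge A_{\ul{1}}$ the premise for $\iota=\ul{1}$ carries operator $f[N(\ul{1})]=f[1]$, and if $A_{\ul{0}}$ happens to be true while $A_{\ul{1}}$ is false, the only false branch already has the enlarged operator. Your informal picture (``a $\bigwedge$-formula appears only after a $\Delta^0_0$-instance has been introduced, and so long as such an instance is false one may keep following a false conjunct without enlarging the operator'') breaks precisely here.

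This is not a merely technical snag: the lemma is false in the stated generality. Take $B_0(x)\equiv (0{=}0)\wedge(x{=}\ul{6})$, $f(m)=2m+5$ (so $f(0)=5$, $f(1)=7$; the conditions $(f.1)$, $(f.2)$ hold), $F=\EN$, $\alpha=5$. Then $f,\EN\vdash^{5}_{0}\exists x\,B_0(x)$ holds: apply $(\bigvee)$ with $t=\ul{0}$; apply $(\bigwedge)$ to $(0{=}0)\wedge(\ul{0}{=}\ul{6})$, whose $\ul{0}$-branch closes by $(\mathsf{Ax}2)$; in the $\ul{1}$-branch the operator is $f[1]$ with $f[1](0)=7$, so one may apply $(\bigvee)$ with $t'=\ul{6}$, and two further steps close the tree. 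Yet no $m_0\le 5$ makes $B_0(\ul{m_0})$ true. What is presumably intended is that each $B_j$ be an $\mathcal{L}_{\mathrm{PA}}$-literal (equivalently, that one first passes to a Kleene normal form with atomic body); then every sequent arising in the induction contains only $\exists$-formulas and literals, $(\bigwedge)$ is never applicable, and the paper's short argument---which is essentially your $(\bigvee)$ case---goes through without your generalized hypothesis at all.
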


\begin{proof}
By induction on $\alpha$.
The derivation forming $f, F \vdash^\alpha_0 \Gamma$ contains no 
($\mathsf{Cut}$) rules.
Hence the last inference rule should be ($\bigvee$).
Thus there exist an ordinal $\alpha_0 < \alpha$ and a (closed) term 
$t \in \mathcal{T} (\LID)$ such that 
$N (t) \leq f(0)$ and
$f, F \vdash^{\alpha_0}_0 \Gamma, B_{l-1} (t)$.
By IH there exists a sequence 
$\langle m_0, \dots, m_{l-1} \rangle$ of naturals such that
$\max \{ m_j \mid j < l \}  \leq f(0)$ and
$% \models 
 B_0 (\ul{m_0}) \vee \cdots \vee B_{l-1} (\ul{m_{l-1}}) \vee
 B_{l-1} (t)$
is true in $\mathbb N$.
If 
$% \models 
 B_0 (\ul{m_0}) \vee \cdots \vee B_{l-1} (\ul{m_{l-1}})
$
is already true in $\mathbb N$, 
then 
$\langle m_0, \dots, m_{l-1} \rangle$ is the desired sequence.
Suppose that
$% \not\models 
 B_0 (\ul{m_0}) \vee \cdots \vee B_{l-1} (\ul{m_{l-1}})
$
is not true in $\mathbb N$. 
Then
$B_{l-1} (t)$ must be true. 
Hence
$B_{l-1} (\ul{\val (t)})$ is also true.
By definition,
$\val (t) =  N (t) \leq f(0)$, and hence
$\langle m_0, \dots, m_{l-2}, \val (t) \rangle$ is the desired sequence.
\qed
\end{proof}

\section{Embedding $\mathbf{ID}_1$ into $\IDomega$}

In this section we embed the theory $\mathbf{ID}_1$ into the infinitary
system $\IDomega$.
Following conventions in the previous section we use the symbol $f$ to
denote a strict increasing function
$f: \mathbb{N} \rightarrow \mathbb{N}$ that enjoys 
the conditions ($f$.\ref{f:1}) and  ($f$.\ref{f:2})
(p. \pageref{f:1}).
Let us recall that the function symbol $\EN \in \mathcal{F}$ denotes 
the function $\EN: \Omega \rightarrow \Omega$ such that
$\EN (\alpha) = \min \{ \xi < \Omega \mid \alpha < \xi \text{ and }
 \xi = \omega^\xi \}$.
It is easy to see that the condition $(\hyp{}{}(\EN))$ holds since
$\EN (\xi) = \varepsilon_0 \leq \EN (0)$
for all $\xi < \EN (0) = \varepsilon_0$.
%We start with extending Lemma \ref{lem:kPi}.\ref{lem:kPi:5}.

%\comments{Work out the embedding.}

\begin{lemma}[Tautology lemma]
\label{l:taut}
Let $s, t \in \mathcal{T} (\LID)$,
$\Gamma$ be a sequent of $\Lstar$-sentences, and 
$A(x)$ be an $\Lstar$-formula such that 
$\mathsf{FV} (A) = \{ x \}$.
If $\val (s) = \val (t)$, then
\begin{equation}
 f[n], 
 \EN [\kO (A)] \vdash^{\rk (A) \cdot 2}_0 
 \Gamma, \neg A(s), A(t),
\label{e:l:taut}
\end{equation}
where 
$n:= \max \{ N (\rk (A)), N(\max \kPO (A)), N(\max \kSO (A)) \}$.
\end{lemma}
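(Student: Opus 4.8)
The plan is to prove (\ref{e:l:taut}) by induction on $\rk(A)$ — equivalently by induction on the build-up of the $\Lstar$-formula $A(x)$ — carrying $s$, $t$ and the side sequent $\Gamma$ as parameters. The derivation skeleton is the usual ``identity'' construction; the real content is to check at every node that the controlling pair $(f[n],\EN[\kO(A)])$, the height $\rk(A)\cdot 2$ and the cut rank $0$ meet the hypothesis $\hyp{}{}$, and that the single choice of $n$ dominates the norm bounds demanded by all subderivations. The book-keeping Lemma~\ref{lem:kPi} — especially parts \ref{lem:kPi:1}, \ref{lem:kPi:3} and \ref{lem:kPi:5} — together with the growth conditions $(f.\ref{f:1})$, $(f.\ref{f:2})$ on $f$ are exactly the tools for this.

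For the base case $A$ is an $\LID$-literal, so $\rk(A)=0$, $\kO(A)=\{0\}$, $n=0$, hence $\EN[\kO(A)](0)=\EN(0)=\varepsilon_0$, and $\hyp{}{}(f;\EN;0)$ holds because $N(\varepsilon_0)=1\le f(0)$ and $K_\Omega 0=\emptyset<\varepsilon_0$; since $A(t)$ is again an $\LID$-literal with $\mathsf{FV}(A)=\{x\}$ and $\val(s)=\val(t)$, rule $(\mathsf{Ax}1)$ yields $f,\EN\vdash^0_0\Gamma,\neg A(s),A(t)$ at once.

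For the induction step I would split on the outermost constructor of $A$. If $A\equiv\itP{\ofA}{\alpha}t_0$, then $A(t)$ is of $\bigvee$-type and $\neg A(s)$ of $\bigwedge$-type; I would apply $(\bigwedge)$ to $\neg A(s)$, and in the branch indexed by $\xi<\alpha$ apply $(\bigvee)$ to $A(t)$ with the same witness $\xi$, reducing to $\Gamma,\neg A(s),A(t),\neg\ofA(\itP{\ofA}{\xi},s),\ofA(\itP{\ofA}{\xi},t)$, to which the IH applies for the formula $B_\xi(x):=\ofA(\itP{\ofA}{\xi},t_0)$ (legal since $\val(s)=\val(t)$, $\mathsf{FV}(B_\xi)=\{x\}$, and $\rk(B_\xi)=\omega\cdot\xi+k<\omega\cdot\alpha=\rk(A)$ for a finite $k$); the branch operator $(f[n][N(\xi)],\EN[\kO(A)][\xi])$ handed over by $(\bigwedge)$ dominates the $(f[n_{B_\xi}],\EN[\kO(B_\xi)])$ supplied by IH because $\kO(B_\xi)\subseteq\kO(A)\cup\{\xi\}$ and $\matho(\xi)=\xi$. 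If $A\equiv B\wedge C$ (dually $B\vee C$) I would apply $(\bigwedge)$ to $A(t)=B(t)\wedge C(t)$ and in each of the two branches pick the matching disjunct of $\neg A(s)=\neg B(s)\vee\neg C(s)$ by $(\bigvee)$, closing by the IH on $B$ and on $C$. If $A\equiv\forall y\,B$ (dually $\exists y\,B$) I would apply $(\bigwedge)$ to $A(t)=\forall y\,B(t,y)$ over all $r\in\mathcal T(\LID)$ and in branch $r$ pick the witness $r$ for $\neg A(s)$ by $(\bigvee)$, closing by the IH on $B(x,r)$, a one-free-variable formula with $\rk(B(x,r))=\rk(B)<\rk(A)$ (instantiating a closed term for $y$ leaves the rank unchanged). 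In every case the height bound $\rk(A)\cdot 2$ is obtained by ordinal arithmetic from the subderivation heights and the shape $\rk(A)=\omega\cdot\max\kPS(A)+k$ of Lemma~\ref{lem:kPi}.\ref{lem:kPi:3}, the copy of $\omega$ in the rank of a $P$-atom providing the needed slack; the norm and coefficient side-conditions of $(\bigvee)$ and $(\bigwedge)$ are met because $n$ bounds $\max\{N(\rk(B)),N(\max\kPO(B)),N(\max\kSO(B))\}$ and $\kO(B)\subseteq\kO(A)$ by Lemma~\ref{lem:kPi}.\ref{lem:kPi:1},\ref{lem:kPi:5}, while the index-norms $N(\iota)$ are bounded by the norms of the instantiating terms and hence by $f[n](0)$.

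The step I expect to be the genuine obstacle is making sure one global $n$ and one global controlling operator $\EN[\kO(A)]$ suffice throughout the whole infinitary tree: passing to a subformula must never force a larger norm bound than $n$ nor introduce a coefficient outside $\kO(A)$, and the $\hyp{}{}$-condition $K_\Omega(\rk(A)\cdot 2)<\EN[\kO(A)](0)$ must persist at every node. This is precisely where Lemma~\ref{lem:kPi} is invoked repeatedly, in combination with the monotonicity of $\EN$ under $[\,\cdot\,]$ and the elementary properties of $f$; by contrast the ordinal arithmetic bounding each height by $\rk(A)\cdot 2$ is routine once the form of $\rk$ from Lemma~\ref{lem:kPi}.\ref{lem:kPi:3} is taken into account.
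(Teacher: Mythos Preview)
Your proposal is correct and follows essentially the same approach as the paper: induction on $\rk(A)$, base case by $(\mathsf{Ax}1)$, and in the induction step one application of $(\bigvee)$ followed by one of $(\bigwedge)$, with Lemma~\ref{lem:kPi} supplying the bookkeeping for norms and coefficients. The paper compresses your case split into a single step by assuming without loss of generality that $A \simeq \bigvee_{\iota \in J} A_\iota$, but the argument is otherwise identical.
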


\begin{proof}
By induction on $\rk (A)$.
Let
$n$ denote the maximal among
$N (\rk (A))$, $N(\max \kPO (A))$ and $N(\max \kSO (A)) \}$.
From Lemma \ref{lem:kPi}.\ref{lem:kPi:3} one can check that the condition
$\hyp{}{}(f[n]; \EN (\kO (A)); \rk (A) \cdot 2)$ holds.
If $\rk (A) =0$, then $A$ is an $\LID$-literal, and hence 
(\ref{e:l:taut})
is an instance of ($\mathsf{Ax}1$).
Suppose that $\rk (A) > 0$. 
Without loss of generality we can assume that
$A \simeq \OR{\iota \in J} A_\iota$.
Let $\iota \in J$.
By Lemma \ref{lem:kPi}.\ref{lem:kPi:5} let us observe that
$N(\rk (A_\iota) \cdot 2) < 2 \{ N (\rk (A)), N(\iota) \} \leq
 f[N (\rk (A))] [N(\iota)] (0) \leq f[n] (0)$
since $2m+1 \leq f(m)$ for all $m$ by the condition ($f$.\ref{f:1}).
Further by Lemma \ref{lem:kPi}.\ref{lem:kPi:1}
$K_{\Omega} (\rk (A_\iota) \cdot 2) \subseteq \kO (A) \cup
 \{ \matho (\iota) \} \leq
 \EN [\kO (A)] [\matho (\iota)]$.
Summing up, we have the condition
\[
 \hyp{}{}(f[n] [N(\iota)]; 
 \EN [\kO (A)] [\matho (\iota)]; \rk (A_\iota) \cdot 2).
\]
Hence by IH we can obtain the sequent
\begin{eqnarray}
 f[n] [N(\iota)], 
 \EN [\kO (A)] [\matho (\iota)]
 \vdash^{\rk (A_\iota) \cdot 2}_0 
 \Gamma, \neg A_\iota (s), A_\iota (t).
\label{e:l:taut:1}
\end{eqnarray}
It is not difficult to see 
$\matho (\iota) \leq \rk (A_\iota) < \rk (A_\iota) \cdot 2 +1$
and
$N (\rk (A_\iota) \cdot 2 +1) = N (\rk (A_\iota) \cdot 2) +1 \leq
 f[N(\rk (A))] [N(\iota)] (0) \leq f[n](0)$.
This allows us to apply
 ($\bigvee$) to the sequent (\ref{e:l:taut:1}) yielding
\[
 f[n] [N (\iota)], 
 \EN [\kO (A)] [\matho (\iota)]
 \vdash^{\rk (A_\iota) \cdot 2 +1}_0
 \Gamma, \neg A_\iota (s), A(t).
\]
We can see that $\rk (A_\iota) \cdot 2 +1 < \rk (A) \cdot 2$,
$N(\max \kPO (A)) \leq f[n](0)$
and
$\kPO (A) < \EN [\kO (A)]$.
Hence we can apply ($\bigwedge$) concluding
(\ref{e:l:taut}).
\qed
\end{proof}

\begin{lemma}
\label{l:PL}
Let $B_j$ be an $\LID$-sentence for each $j =0, \dots, l$.
Suppose that 
$(\neg B_0) \vee \cdots \vee (\neg B_{l-1}) \vee B_l$
is a logical consequence in the first order predicate logic
 with equality.
Then there exists a natural
$k < \omega$ such that
$f[m+k], \EN \vdash^{\Omega \cdot 2 +k}_0 \{ B_j \mid 0 \leq j \leq l \}$, 
where
$m = \max \{ N(\rk (B_j)) \mid j =0, 1, \dots, l \}$.
\end{lemma}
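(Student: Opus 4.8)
The plan is to turn the given first-order validity into a \emph{cut-free} finite derivation in a Tait-style sequent calculus and then to simulate that derivation inside $\IDomega$ step by step, reading each axiomatic leaf through the Tautology lemma (Lemma~\ref{l:taut}) and each first-order inference through the corresponding infinitary rule $(\bigvee)$ or $(\bigwedge)$. By hypothesis the $\Lstar$-sentence $\neg B_0\vee\cdots\vee\neg B_{l-1}\vee B_l$ is valid in first-order predicate logic with equality; this is the disjunction of the sequent $\Gamma_0:=\{\neg B_0,\dots,\neg B_{l-1},B_l\}$, which we identify with the sequent named in the conclusion. I would first appeal to the completeness theorem together with Gentzen's Hauptsatz, in the formulation in which the equality axioms are retained as initial sequents: there is a finite cut-free derivation $d$ of $\Gamma_0$ in the Tait calculus whose initial sequents are the atomic identity axioms $\Delta,\neg A,A$ with $A$ an $\LID$-literal, together with the finitely many equality axioms — reflexivity $\forall x\,(x=x)$ and the congruence axioms for the function and predicate symbols of $\LID$ occurring in the $B_j$ — and whose inference rules are weakening and the introduction rules for $\wedge,\vee,\forall,\exists$. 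Let $h$ denote the height of $d$.

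Next I would embed $d$ into $\IDomega$ by recursion on $h$, using the standard device of replacing the eigenvariables of the $\forall$-rules of $d$ by all closed $\LID$-terms simultaneously; under it a $\forall$-rule of $d$ becomes an application of $(\bigwedge)$ ranging over $\mathcal T(\LID)$, a binary $\wedge$-rule an application of $(\bigwedge)$ over $\{\ul{0},\ul{1}\}$, an $\exists$-rule an application of $(\bigvee)$ with the (now closed) witnessing term, and a $\vee$-rule an application of $(\bigvee)$, while weakening is absorbed since sequents are finite sets. For the leaves: an identity axiom $\Delta,\neg A,A$ with $A$ a closed $\LID$-literal, written as $A(s)$ by abstracting one term position, is an instance of the Tautology lemma with $s\equiv t$; since a literal has $\rk(A)\le\Omega$, this yields an $\IDomega$-derivation of cut rank $0$ and ordinal $\rk(A)\cdot2\le\Omega\cdot2$ — and it is precisely the rank-$\Omega$ atoms $P_{\mathcal A}(t)=\itP{\ofA}{\Omega}t$ that push this cost up to $\Omega\cdot2$ and so force the summand $\Omega\cdot2$ in the bound. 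Reflexivity is derived by applying $(\mathsf{Ax}2)$ to each closed instance $t=t\in\mathsf{TRUE}_0$ and then $(\bigwedge)$; each congruence axiom by applying to every closed instance either $(\mathsf{Ax}1)$ (when the relevant closed terms have equal values) or $(\mathsf{Ax}2)$ (when some component inequality lies in $\mathsf{TRUE}_0$), followed by finitely many $(\bigvee)$ and $(\bigwedge)$ steps. Hence every leaf of $d$ acquires an $\IDomega$-derivation of cut rank $0$ and ordinal at most $\Omega\cdot2+c_0$ for a constant $c_0$ depending only on the $B_j$; assigning to a node of $d$ at height $r$ from the leaves the ordinal $\Omega\cdot2+c_0+r$ — legitimate, as each of the finitely many rules of $d$ raises the $\IDomega$-ordinal by just $1$ and leaves the cut rank $0$ — produces a derivation of $\Gamma_0$ with ordinal $\Omega\cdot2+c_0+h$, which a final ordinal-weakening step raises to $\Omega\cdot2+k$ as soon as $k\ge c_0+h$.

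It then remains to check that the whole construction is legitimately controlled by the pair $(f[m+k],\EN)$. The operator stays $\EN$ throughout, because $\kO(A)=\{0\}$ for every $\LID$-formula $A$ (the only non-$\mathcal L_{\mathrm{PA}}$ atoms being $P_{\mathcal A}(t)$, whose coefficient $\Omega$ is discarded from $\kO$) and $\matho(t)=0$ for every $t\in\mathcal T(\LID)$, so $\EN[\kO(A)]=\EN$ and $\EN[\matho(t)]=\EN$; in particular $\kPO(A)=\{0\}<\EN(0)=\varepsilon_0$ and $K_\Omega(\Omega\cdot2+k)<\varepsilon_0$. By monotonicity of operator control in the controlling function, the various functions $f[n_A]$ produced by the Tautology lemma may be replaced by $f[m+k]$; the norm conditions $N(\Omega\cdot2+k)\le f[m+k](0)$, $N(\max\kPO(A))\le f[m+k](0)$, and $N$ of each $\exists$-witness bounded by the current controlling function at $0$, all reduce, via Lemma~\ref{lem:kPi}, $N(\Omega)=1$ and $\kPO(A)=\{0\}$, to elementary estimates valid once $k$ is large enough. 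The one point needing attention is that inside a sub-branch where eigenvariables have been replaced by closed terms $t_1,\dots,t_j$ the controlling function is $f[m+k+N(t_1)+\cdots+N(t_j)]$, so an $\exists$-witness, whose value exceeds $N(t_1)+\cdots+N(t_j)$ only by a constant bounded by the $B_j$, is still majorised at $0$ because $f$ is strictly increasing and dominates the identity.

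The main obstacle is the first step: one must invoke cut-elimination for first-order logic \emph{with} equality in exactly the form that leaves the equality axioms standing as initial sequents, and then verify that those equality axioms — and the rank-$\Omega$ identity axioms $\Delta,\neg P_{\mathcal A}(t),P_{\mathcal A}(t)$ — really do admit $\IDomega$-derivations of cut rank $0$ and ordinal at most $\Omega\cdot2+c_0$. Once this is in hand, the recursion on $d$ and the norm/operator bookkeeping summarised above are routine.
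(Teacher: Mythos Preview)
Your plan is essentially the paper's own argument: obtain a cut-free sequent-calculus derivation (the paper cites $\mathbf{G3_m}$ from Troelstra--Schwichtenberg) and embed it into $\IDomega$ by induction on the proof height, invoking the Tautology lemma at the leaves; you simply spell out more of the equality handling and operator/norm bookkeeping than the paper's two-line sketch does.

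One point to tighten: your claim that an $\exists$-witness ``exceeds $N(t_1)+\cdots+N(t_j)$ only by a constant'' is correct when the term-forming operations of $\LPA$ are at most additive (e.g.\ $0,S,+$), but fails if $\LPA$ contains multiplication or faster functions, since then a witness $s[\vec t/\vec y]$ can have value polynomial (or worse) in the $N(t_i)$, and $f(m+k+\sum N(t_i))$ need not dominate it for general $f$ satisfying $(f.1)$--$(f.2)$. The paper's proof is equally silent here; either one restricts the primitive function symbols of $\LPA$, or one observes that in the only application (the main theorem) $f$ is taken to be $\suc^\omega$, which grows fast enough to absorb the finitely many fixed term-shapes appearing in the cut-free proof.
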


\begin{proof}
Let $B_j$ be an $\LID$-sentence for each $j =0, \dots, l-1$ and 
suppose that $B_0 \vee \cdots \vee B_{l-1}$
is a logical consequence in the first order predicate logic
 with equality.
Then we can find a cut-free proof of the sequent 
$\{ B_j \mid 0 \leq j \leq l-1 \}$ in an {\bf LK}-style sequent calculus.
More precisely we can find a cut-free proof $P$ of
$\{ B_j \mid 0 \leq j \leq l-1 \}$
in the sequent calculus $\mathbf{G3_m}$.
(See the book \cite{TS2000} of Troelstra and Schwichtenberg for the
 definition.)
Let $h$ denote the tree height of the cut-free proof $P$.
Then by induction on $h$ one can find a witnessing natural 
$k$ such that
$f[m+k], F \vdash^{\alpha}_0 \{ B_j \mid 0 \leq j \leq l-1 \}$
for all $\alpha \geq \Omega +k$.
In case $h=0$ Tautology lemma (Lemma \ref{l:taut}) can be applied since
for any $\LID$-sentence $A$,
$\rk (A) \in \omega \cup \{ \Omega + k \mid k < \omega \}$ and 
$\kPi (A) \cup \kSigma (A) = \kPS (A) \subseteq \{ 0, \Omega \}$,
and hence
$\kO (A) = \{ 0 \}$ and
$\max \{ N(\max \kPO (A)), N(\max \kSO (A)) \} = 0$.
\qed
\end{proof}

\begin{lemma}
\label{l:ind}
Let $m \in \mathbb{N}$ and $A(x)$ be an 
$\LID$-formula such that $\mathsf{FV} (A(x)) = \{ x \}$.
Then for any $t \in \mathcal{T} (\LID)$ and for any sequent
 $\Gamma$ of $\LID$-sentences, if $\val (t) = m$, then %and 
%$N (\rk (A)) \leq f(0)$, then
\begin{equation}
 f [N (\rk (A)) +m], \EN \vdash^{(\rk (A) + m) \cdot 2}_0 
 \Gamma, \neg A(0), \neg \forall x (A(x) \rightarrow A(S(x))), A(t).
\label{e:l:ind:0}
\end{equation}
\end{lemma}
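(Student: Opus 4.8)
The plan is to argue by induction on $m=\val(t)$, keeping $A(x)$ and $\Gamma$ fixed. The argument is smoothed by the fact that $A(x)$ is an $\LID$-formula: then $\kPi(A)\cup\kSigma(A)\subseteq\{0,\Omega\}$, so $\kPO(A)=\kSO(A)=\kO(A)=\{0\}$, whence $\EN[\kO(A)]=\EN$ and $N(\max\kPO(A))=N(\max\kSO(A))=0$. Thus the side conditions involving $\kPO,\kSO,\kO$ in the rules $(\bigvee)$ and $(\bigwedge)$, and in the hypothesis clause of Definition~\ref{d:OCD}, are trivial at every node, and the remaining bookkeeping concerns only the ordinal superscript and the shift $k$ in the operator $f[k]$. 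Since $N$ is additive on sums, $N((\rk(A)+m)\cdot 2)=2(N(\rk(A))+m)\le f(N(\rk(A))+m)$ by $(f.\ref{f:1})$, and $K_\Omega((\rk(A)+m)\cdot 2)$ consists of natural numbers, hence lies below $\EN(0)=\varepsilon_0$; so the hypothesis clause holds automatically for all operators we shall meet. Throughout, the statement is read (and proved) uniformly in $f$, so that it may be reinvoked with $f$ replaced by any shift $f[k]$, which again satisfies $(f.\ref{f:1})$ and $(f.\ref{f:2})$.

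Base case $m=0$: here $\val(t)=0=\val(0)$, so the Tautology lemma (Lemma~\ref{l:taut}), applied with $s:=0$ and with its free sequent taken to be $\Gamma\cup\{\neg\forall x(A(x)\to A(S(x)))\}$, yields directly $f[N(\rk(A))],\EN\vdash^{\rk(A)\cdot 2}_0\Gamma,\neg A(0),\neg\forall x(A(x)\to A(S(x))),A(t)$, which is $(\ref{e:l:ind:0})$ for $m=0$: the ordinal is $\rk(A)\cdot 2=(\rk(A)+0)\cdot 2$ and the operator is $f[N(\rk(A))+0]$.

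Step $m\to m+1$: write $\mathrm{Prog}:=\forall x(A(x)\to A(S(x)))$. Unwinding $\to$ and the de Morgan clauses, $\neg\mathrm{Prog}\simeq\bigvee_{s\in\mathcal{T}(\LID)}\bigl(A(s)\wedge\neg A(S(s))\bigr)$, and each disjunct $A(s)\wedge\neg A(S(s))$ is of $\bigwedge$-type with conjuncts $A(s)$ and $\neg A(S(s))$. I would apply $(\bigvee)$ to $\neg\mathrm{Prog}$ with the witness $\ul{m}$ (then $N(\ul{m})=m$ and $\matho(\ul{m})=0$ cause no trouble) and then $(\bigwedge)$ to the resulting conjunction $A(\ul{m})\wedge\neg A(S(\ul{m}))$. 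Of the two premises of this $(\bigwedge)$, the one carrying $A(\ul{m})$ comes from the induction hypothesis for $m$, invoked with term $\ul{m}$, with its free sequent taken to be $\Gamma\cup\{A(t),\,A(\ul{m})\wedge\neg A(S(\ul{m}))\}$, and with base function $f[1]$; the one carrying $\neg A(S(\ul{m}))$ comes from the Tautology lemma with $s:=S(\ul{m})$ (legitimate since $\val(S(\ul{m}))=m+1=\val(t)$), with the remaining formulas absorbed into its free sequent, and with base function $f[m+2]$. With these choices the two premises of $(\bigwedge)$ carry the operators $f[N(\rk(A))+m+1][N(\iota)]$ for $\iota\in\{\ul{0},\ul{1}\}$, so its conclusion, and hence the conclusion of the final $(\bigvee)$, carries $f[N(\rk(A))+m+1]$, exactly as required by $(\ref{e:l:ind:0})$.

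The delicate point is the ordinal accounting of this step: the induction-hypothesis premise arrives at ordinal $(\rk(A)+m)\cdot 2$ and the Tautology premise at $\rk(A)\cdot 2$, and the two added inferences $(\bigwedge)$ and $(\bigvee)$ have to push the superscript up to no more than $(\rk(A)+m+1)\cdot 2$. For $\LID$-formulas of finite rank this is simply the identity $(\rk(A)+m)\cdot 2+2=(\rk(A)+m+1)\cdot 2$ and is immediate; for formulas of rank $\Omega+k$ — which do occur, as $\rk(\itP{\ofA}{\Omega}t)=\Omega$ — the margin is tight and the intermediate ordinal fed to $(\bigwedge)$ must be chosen with care. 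Everything else (the hypothesis clause at each node, the coefficient bounds below $\EN(0)$, the norm bounds against $f(0)$, and the invariance of the operator under $(\bigvee)$) is routine in view of the first paragraph.
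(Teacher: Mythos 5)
Your derivation is the same as the paper's: base case via the Tautology lemma with $s := 0$; step via the IH at $\ul{m}$ for the $A(\ul{m})$ branch, Tautology with $s := S(\ul{m})$ and $t$ for the $\neg A(S(\ul{m}))$ branch (legitimate since $\val(S(\ul{m}))=m+1=\val(t)$), then $(\bigwedge)$ into $A(\ul{m})\wedge\neg A(S(\ul{m}))$ followed by $(\bigvee)$ with witness $\ul{m}$ into $\exists x(A(x)\wedge\neg A(S(x)))$. Your operator bookkeeping is in fact tidier than the printed proof: by pre-shifting the base functions handed to the IH and to Tautology you arrange for the two $(\bigwedge)$-premises to arrive literally with operators $f[N(\rk(A))+m+1][N(\iota)]$, whereas the paper weakens the operator implicitly between the $(\bigwedge)$- and $(\bigvee)$-steps.

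The delicate point you raise, however, is more than delicate, and you leave it open. Under literal ordinal multiplication, $(\rk(A)+m+1)\cdot 2 = (\rk(A)+m)\cdot 2 + 1$, not $+2$, as soon as $\rk(A)\ge\omega$ (which happens exactly when $A$ mentions a fixed-point predicate, giving $\rk(A)=\Omega+k$), because $1+\rk(A)=\rk(A)$. Your step, like the paper's, spends two strict decrements of the superscript — one for $(\bigwedge)$ and one for $(\bigvee)$ — so with this reading the $(\bigwedge)$-conclusion already sits at $(\rk(A)+m)\cdot 2 + 1 = (\rk(A)+m+1)\cdot 2$, and the side condition $\alpha_0<\alpha$ of $(\bigvee)$ has no room; "choosing the intermediate ordinal with care" cannot repair this because there is no slack at all. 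What actually makes the induction close (and, incidentally, the analogous decrement in the Tautology lemma's proof as well) is reading $\rk(A)\cdot 2$ as the natural (Hessenberg) sum $\rk(A)\#\rk(A)$, for which $(\alpha+1)\cdot 2 = \alpha\cdot 2 + 2$ holds for every $\alpha$; equivalently one may restate the bound as $\rk(A)\cdot 2 + 2m$, whose increment in $m$ is honestly $2$. Either choice is harmless downstream, where all that is used is a bound below $\Omega\cdot 2 + \omega$. You should make one of these choices explicit rather than leaving the ordinal arithmetic as a loose end.
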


\begin{proof}
By induction on $m$.
The base case $\val (t) = m =0$ follows from Tautology lemma (Lemma \ref{l:taut}).
For the induction step suppose $\val (t) = m+1$.
Fix a sequent $\Gamma$ of $\LID$-sentences.
Then (\ref{e:l:ind:0}) holds by IH.
On the other hand again by Tautology lemma, %Lemma \ref{l:taut},
\begin{equation}
f[N (\rk (A))], \EN \vdash^{\rk (A) \cdot 2}_0
\Gamma, \neg A(0), \exists x (A(x) \wedge \neg A(S(x))),
A(\ul{m}), \neg A(\ul{m}).
\label{e:l:ind:1}
\end{equation}
An application of ($\bigwedge$) to the two sequents (\ref{e:l:ind:0})
 and (\ref{e:l:ind:1}) yields
\begin{equation*} 
f[N (\alpha_m)], \EN \vdash^{\alpha_m \cdot 2 +1}_0
\Gamma, \neg A(0), \exists x (A(x) \wedge \neg A(S(x))),
A(t), A(\ul{m}) \wedge \neg A(\ul{m}),
%\label{e:l:ind:1}
\end{equation*}
where $\alpha_m := \rk (A)) +m$.
The final application of ($\bigvee$) yields
\begin{equation*} 
f[N (\rk (A)) +m+1], F \vdash^{(\rk (A) +m+1) \cdot 2}_0
\Gamma, \neg A(0), \exists x (A(x) \wedge \neg A(S(x))),
A(t).
%\label{e:l:ind:1}
\end{equation*}
\qed
\end{proof}

\begin{lemma} 
\label{l:GIL}
Let $\xi \leq \Omega$,
$F(x)$ be an $\LID$-formula such that
$\mathsf{FV} (F(x)) = \{ x \}$ and $B(X)$ be an $X$-positive
 $\mathcal{L}_{\mathrm{PA}} (X)$-formula such that
$\mathsf{FV} (B) = \emptyset$.
Then
\[
 f[N (\sigma + \alpha +1)], 
 \EN [K_{\Omega} \xi] \vdash^{(\sigma + \alpha +1) \cdot 2}_0
 \Gamma, \neg \forall x (\ofA (F, x) \rightarrow F(x)), 
 \neg B(\itP{\ofA}{\xi}), B(F),
\]
where
$\sigma := \rk (F)$ and
$\alpha := \rk (B(\itP{\ofA}{\xi}))$.
\end{lemma}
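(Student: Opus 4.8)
The plan is to prove the sequent in the statement by induction on $\rk(B(\itP{\ofA}{\xi}))$ --- equivalently, by induction on the build-up of the $X$-positive formula $B$. Write $\Theta$ for the fixed context $\Gamma, \neg\forall x(\ofA(F,x)\rightarrow F(x))$ and note that, by de Morgan, $\neg\forall x(\ofA(F,x)\rightarrow F(x))$ is the $\bigvee$-type sentence $\exists x(\ofA(F,x)\wedge\neg F(x)) \simeq \bigvee_{s}(\ofA(F,s)\wedge\neg F(s))$; this disjunction is the sole place where the closure hypothesis will be used. Before splitting into cases I would record, using Lemma \ref{lem:kPi}.\ref{lem:kPi:3} together with $K_\Omega\mu=\{\mu\}$ for $\mu<\Omega$ and $K_\Omega\Omega=\emptyset$, that the relevant instances of $\hyp{}{}(\cdots)$ hold: the norms of the coefficients of $B(\itP{\ofA}{\xi})$ and of $F$ stay below the displayed shift of $f$, and those coefficients stay below $\EN[K_\Omega\xi](0)=\EN(\max K_\Omega\xi)$, for which one uses $\EN(\zeta)>\zeta$ and $\EN(\zeta)=\varepsilon_0$ when $\zeta<\varepsilon_0$.

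If $B$ is an $\LPA$-literal --- it cannot be a negated $X$-atom, by positivity --- then $B(\itP{\ofA}{\xi})$ and $B(F)$ both equal $B$, so the sequent contains a complementary literal pair and is an instance of $(\mathsf{Ax}2)$ (or of $(\mathsf{Ax}1)$), which may be taken at any height meeting the hypothesis. If $B$ is $B_0\wedge B_1$, $B_0\vee B_1$, $\forall y\,B_0$ or $\exists y\,B_0$ with $B_0, B_1$ again $X$-positive, then $\neg B(\itP{\ofA}{\xi})$ and $B(F)$ are $\bigwedge$- or $\bigvee$-type sentences built from the components $\neg B_i(\itP{\ofA}{\xi},\cdot)$ and $B_i(F,\cdot)$, each of strictly smaller rank than $B(\itP{\ofA}{\xi})$. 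I would peel off, with one application of the relevant $\bigwedge$- or $\bigvee$-rule, a matching pair $\neg B_i(\itP{\ofA}{\xi},s)$ beside $B_i(F,s)$, apply the induction hypothesis to $B_i$ at the same stage $\xi$ --- with the shift of $f$ that the premise of the $\bigwedge$-rule prescribes in the quantifier cases, which is harmless since $f[n]$ again satisfies ($f$.\ref{f:1}) and ($f$.\ref{f:2}) --- and re-assemble by the dual rule; the heights combine to at most $\max_i(\sigma+\rk(B_i(\itP{\ofA}{\xi},s))+1)\cdot 2$ plus a small constant, which stays below $(\sigma+\rk(B(\itP{\ofA}{\xi}))+1)\cdot 2$.

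The heart of the argument is the leaf case $B\equiv X(t)$ with $t\in\mathcal{T}(\LID)$: here $B(\itP{\ofA}{\xi})=\itP{\ofA}{\xi}t$, so $\rk(B(\itP{\ofA}{\xi}))=\omega\cdot\xi=\alpha$, and $B(F)=F(t)$. Since $\neg\itP{\ofA}{\xi}t\simeq\bigwedge_{\mu<\xi}\neg\ofA(\itP{\ofA}{\mu},t)$, one application of $(\bigwedge)$ reduces the goal to deriving, for every $\mu<\xi$, the sequent $\Theta,\neg\itP{\ofA}{\xi}t,F(t),\neg\ofA(\itP{\ofA}{\mu},t)$; here $\EN[K_\Omega\xi]$ picks up $\matho(\mu)=\mu$, and since $K_\Omega\mu=\{\mu\}$ the resulting operator $\EN[K_\Omega\xi\cup\{\mu\}]$ majorises the $\EN[K_\Omega\mu]$ that the induction hypothesis at stage $\mu$ demands. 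Now $\ofA(X,t)$ is itself an $X$-positive $\LPA(X)$-sentence and, by Lemma \ref{lem:kPi}.\ref{lem:kPi:3}, $\rk(\ofA(\itP{\ofA}{\mu},t))=\omega\cdot\mu+n<\omega\cdot\xi=\alpha$ for some $n<\omega$; hence the induction hypothesis applies to $\ofA(X,t)$ at stage $\mu$ and supplies $\Theta,\neg\ofA(\itP{\ofA}{\mu},t),\ofA(F,t)$. To pass from $\ofA(F,t)$ to $F(t)$, apply $(\bigvee)$ to the disjunction $\exists x(\ofA(F,x)\wedge\neg F(x))\in\Theta$ at $x:=t$ --- this needs $N(t)$ to lie below the controlling function, which is exactly why the lemma quantifies over all $f$ obeying ($f$.\ref{f:1}) and ($f$.\ref{f:2}) --- reducing to the sequent with the extra $\bigwedge$-type conjunct $\ofA(F,t)\wedge\neg F(t)$; then $(\bigwedge)$ on that conjunct splits it into the branch carrying $\ofA(F,t)$, which is (after structural weakening) exactly the sequent the induction hypothesis just provided, and the branch carrying $\neg F(t)$, which now holds the complementary pair $F(t),\neg F(t)$ and is dispatched by the Tautology lemma (Lemma \ref{l:taut}). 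Collecting these premises back through $(\bigwedge)$, $(\bigvee)$ and the outer $(\bigwedge)$ over $\mu<\xi$ yields the desired sequent for $B\equiv X(t)$.

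The proof skeleton --- the standard ``monotonicity plus closure'' argument --- is not the hard part; the bookkeeping is. Two points deserve care. First, in the $X(t)$-case one must see that the choice of $\rk(B(\itP{\ofA}{\xi}))$ as induction measure is precisely what lets the stage $\xi$ strictly drop to $\mu<\xi$ when $\itP{\ofA}{\xi}$ is unfolded, so that the lemma effectively proves itself for the operator form $\ofA$ at all lower stages simultaneously. Second, at every rule application one must check the hypothesis $\hyp{}{}(f[\cdots];\EN[\cdots];\cdot)$ and confirm that the heights telescope to exactly $(\sigma+\alpha+1)\cdot 2$; this rests on the norm estimate in Lemma \ref{lem:kPi}.\ref{lem:kPi:5}, the coefficient inclusion $K_\Omega(\rk A)\subseteq\kO(A)$ from Lemma \ref{lem:kPi}.\ref{lem:kPi:1}, the facts about $K_\Omega$ and $\EN$ recorded above, and the (height- and operator-preserving) admissibility of structural weakening.
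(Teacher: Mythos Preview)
Your proposal is correct and follows essentially the same route as the paper: a case split on the shape of $B$, the Tautology lemma for literals, the routine propositional/quantifier cases, and in the leaf case $B\equiv X(t)$ the unfolding of $\neg\itP{\ofA}{\xi}t$ together with the closure disjunct $\exists x(\ofA(F,x)\wedge\neg F(x))$ and the Tautology lemma for $F(t),\neg F(t)$. The only cosmetic differences are that the paper organises the argument as a \emph{double} induction (main induction on $\xi$, side induction on $\rk(B(\itP{\ofA}{\xi}))$) and, in the $X(t)$-case, applies the outer $(\bigwedge)$ over $\eta<\xi$ \emph{before} discharging $\ofA(F,t)$ via the closure disjunct, whereas you package everything as a single transfinite induction on $\rk(B(\itP{\ofA}{\xi}))$ and perform the closure step inside each $\mu$-branch; since $\rk(\ofA(\itP{\ofA}{\mu},t))<\omega\cdot\xi=\rk(\itP{\ofA}{\xi}t)$, your single induction subsumes the paper's main/side pair, and both rule orderings yield the same height bound. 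One small caveat: your parenthetical ``equivalently, by induction on the build-up of $B$'' is not literally right, since in the $X(t)$-case you invoke the hypothesis for $\ofA(X,t)$, which is not a subformula of $X(t)$; it is precisely the rank, not the syntactic build-up, that decreases there---as you yourself emphasise in your closing remarks.
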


\begin{proof}
By main induction on $\xi$ and side induction on 
$\rk (B(\itP{\ofA}{\xi}))$.
Let
$\Cl_{\ofA} (F) :\equiv 
 \neg \forall x (\ofA (F, x) \rightarrow F(x)) \equiv
 \exists x (\ofA (F, x) \wedge \neg F(x))$.
The argument splits into several cases depending on the shape of
the formula $B(X)$.

\textsc{Case.}
$B(X)$ is an $\LPA$-literal:
In this case $B$ does not contain the set free variable $X$, and hence
Tautology lemma (Lemma \ref{l:taut}) can be applied.

\textsc{Case.}
$B \equiv X(t)$ for some $t \in \mathcal{T}(\LID)$:
In this case
$\neg B(\itP{\ofA}{\xi}) \equiv \neg \itP{\ofA}{\xi} t \equiv 
 \AND{\eta < \xi} \neg \ofA (\itP{\ofA}{\eta}, t)$.
Let $\eta < \xi$.
Then by MIH 
\begin{equation*}
f[N(\sigma + \alpha_\eta +1)], \EN [K_{\Omega} \eta] 
\vdash^{(\sigma + \alpha_\eta +1) \cdot 2}_0
\Gamma, \Cl_{\ofA} (F), \neg \ofA (\itP{\ofA}{\eta}, t),
\ofA (F, t), F(t)
%\label{e:l:GNL:1}
\end{equation*}
where
$\alpha_\eta := \rk (\ofA (\itP{\ofA}{\eta}, t))$.
We note that $\eta < \xi \leq \Omega$ and hence
$K_{\Omega} \eta = \{ \eta \} = \{ \matho (\eta) \}$.
Hence this yields the sequent
\begin{equation*}
f[N(\sigma + \alpha)] [N (\eta)], 
\EN [\matho (\eta)] 
\vdash^{(\sigma + \alpha_\eta +1) \cdot 2}_0
\Gamma, \Cl_{\ofA} (F), \neg \ofA (\itP{\ofA}{\eta}, t),
\ofA (F, t), F(t).
%\label{e:l:GNL:1}
\end{equation*}
An application of $(\AND{})$ yields the sequent
\begin{equation}
f[N(\sigma + \alpha)], \EN [K_\Omega \xi]
\vdash^{(\sigma + \alpha) \cdot 2}_0
\Gamma, \Cl_{\ofA} (F), \neg \itP{\ofA}{\xi} t,
\ofA (F, t), F(t).
\label{e:l:GNL:1}
\end{equation}
On the other hand by Tautology lemma (Lemma \ref{l:taut}),
\begin{equation}
f[N (\sigma + \alpha)], \EN [K_\Omega \xi] 
\vdash^{\rk (F) \cdot 2}_0
\Gamma, \Cl_{\ofA} (F), \neg \itP{\ofA}{\xi} t,
\neg F(t), F(t).
\label{e:l:GNL:2}
\end{equation}
Another application of $(\AND{})$ to the two sequents (\ref{e:l:GNL:1}) and
 (\ref{e:l:GNL:1}) yields the sequent
\begin{equation*}
f[N(\sigma + \alpha +1)], \EN [K_\Omega \xi]
\vdash^{(\sigma + \alpha) \cdot 2 +1}_0
\Gamma, \Cl_{\ofA} (F), \neg \itP{\ofA}{\xi} t,
\ofA (F, t) \wedge \neg F(t), F(t).
\end{equation*}
An application of $(\OR{})$ allows us to conclude
\begin{equation*}
f[N(\sigma + \alpha +1)], \EN [K_\Omega \xi]
\vdash^{(\sigma + \alpha +1) \cdot 2}_0
\Gamma, \Cl_{\ofA} (F), \neg \itP{\ofA}{\xi} t,
F(t).
\end{equation*}

\textsc{Case.}
$B(X) \equiv \forall y B_0 (X, y)$ for some $\LPA$-formula $B_0 (X, y)$:
Let 
$\alpha_0$ denote the ordinal 
$\rk (B_0 (\itP{\ofA}{\xi}, \ul{0}))$.
Then $\alpha = \alpha_0 +1$.
By the definition of the rank function $\rk$, 
$\alpha_0 = \rk (B_0 (\itP{\ofA}{\xi}, t))$
for all $t \in \mathcal{T} (\LID)$. 
Fix a closed term $t \in \mathcal{T} (\LID)$. 
Then from SIH we have the sequent
\begin{equation*}
f[N(\sigma + \alpha +1)], \EN [K_{\Omega} \xi]
\vdash^{(\sigma + \alpha) \cdot 2}_0
\Gamma, \Cl_{\ofA} (F), \neg B_0 (\itP{\ofA}{\xi}, t),
B_0 (\itP{\ofA}{\xi}, t).
\end{equation*}
An application of $(\OR{})$ yields the sequent
\begin{equation*}
f[N(\sigma + \alpha +1)], \EN [K_{\Omega} \xi]
\vdash^{(\sigma + \alpha) \cdot 2 +1}_0
\Gamma, \Cl_{\ofA} (F), \neg \forall y B_0 (\itP{\ofA}{\xi}, y),
B_0 (\itP{\ofA}{\xi}, t).
\end{equation*}
And an application of $(\AND{})$ allows us to conclude.

The other cases can be treated in similar ways.
\qed
\end{proof}

\begin{lemma}
\label{l:ID}
\begin{enumerate}
\item $f[N(\rk (\ofA (\itP{\ofA}{\Omega}, \ul{0})) +1], 
       \EN \vdash^{\Omega \cdot 2 + \omega}_0 
       \forall x (\ofA (\itP{\ofA}{\Omega}, x) \rightarrow 
                  \itP{\ofA}{\Omega} x)$.
\label{l:ID:1}
\item $f[3+l], \EN \vdash^{\Omega \cdot 2 + \omega}_0
       \forall \vec{y} 
       [\forall x \{ \ofA (F(\cdot, \vec{y}), x) \rightarrow F(x, \vec{y})
                  \} \rightarrow
        \forall x \{ \itP{\ofA}{\Omega} x \rightarrow F(x, \vec{y}) \}
       ]$, 
       where $\vec{y} = y_0, \dots, y_{l-1}$.
\label{l:ID:2}
\end{enumerate}
\end{lemma}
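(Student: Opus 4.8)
The two items are the embeddings into $\IDomega$ of the axiom schemata $(\mathsf{ID}1)$ and $(\mathsf{ID}2)$ of $\mathbf{ID}_1$, and the plan is to reduce each to machinery already in place — the Tautology lemma, the closure rule $(\Clrule)$, and Lemma~\ref{l:GIL} — and then close up with the logical rules $(\bigvee)$ and $(\bigwedge)$.

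For \ref{l:ID:1}, abbreviate $A(x):\equiv \ofA(\itP{\ofA}{\Omega},x)$, so the target sentence is $\bigwedge_{t\in\mathcal{T}(\LID)}(\neg A(t)\vee\itP{\ofA}{\Omega}t)$. Fix a closed term $t$ and keep $\itP{\ofA}{\Omega}t$, together with the disjunction formula, in the side sequent. Tautology lemma (Lemma~\ref{l:taut}), applied with $s:=t$, then supplies a derivation of $\Gamma_0,\neg A(t),A(t)$ for any $\Gamma_0\ni\itP{\ofA}{\Omega}t$, with controlling operators $f[N(\rk(A))],\EN$ — here $\kO(A)=\{0\}$ because every $\itP{\ofA}{\Omega}$-atom of $A$ occurs positively with $\kPi$-coefficient $0$, so $\EN[\kO(A)]=\EN$ — at ordinal $\rk(A)\cdot 2=\Omega\cdot 2+n$, where $n<\omega$ is determined by $\rk(A)=\Omega+n$. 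Because $\itP{\ofA}{\Omega}t$ already occurs and $A(t)\equiv\ofA(\itP{\ofA}{\Omega},t)$, a single application of $(\Clrule)$ (licensed by $\Omega<\rk(A)\cdot 2$) deletes $A(t)$; two applications of $(\bigvee)$, first on the disjunct $\neg A(t)$ and then on $\itP{\ofA}{\Omega}t$ (both with $\matho$-value $0$), fold the sequent into $\Gamma,\neg A(t)\vee\itP{\ofA}{\Omega}t$; and a final $(\bigwedge)$ over all $t$ yields the $(\mathsf{ID}1)$-sentence. Only finitely many steps are added above $\Omega\cdot 2+n$, so the derivation ordinal stays below $\Omega\cdot 2+\omega$, and after absorbing the side-sequent formulas the base operator can be taken as $f[N(\rk(\ofA(\itP{\ofA}{\Omega},\ul{0})))+1]$.

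For \ref{l:ID:2} — the minimality principle — Lemma~\ref{l:GIL} does the real work. Strip the $l$-fold $\forall\vec{y}$ by $(\bigwedge)$ and fix closed $\vec{t}$; the remaining goal is the disjunction $\mathrm{Cl}_{\ofA}(F(\cdot,\vec{t}))\vee\forall x(\neg\itP{\ofA}{\Omega}x\vee F(x,\vec{t}))$, writing $\mathrm{Cl}_{\ofA}(F)\equiv\neg\forall x(\ofA(F,x)\rightarrow F(x))$. Pick the second disjunct by $(\bigvee)$, and for each closed term $s$ invoke Lemma~\ref{l:GIL} with $\xi:=\Omega$, $B(X):\equiv X(s)$ and the formula $F(\cdot,\vec{t})$; since $K_{\Omega}\Omega=\emptyset$ one gets a derivation of $\Gamma,\mathrm{Cl}_{\ofA}(F(\cdot,\vec{t})),\neg\itP{\ofA}{\Omega}s,F(s,\vec{t})$ whose second operator component is $\EN$. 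Two $(\bigvee)$-steps build $\neg\itP{\ofA}{\Omega}s\vee F(s,\vec{t})$, a $(\bigwedge)$ over $s$ builds $\forall x(\neg\itP{\ofA}{\Omega}x\vee F(x,\vec{t}))$, a last $(\bigvee)$ reinstates the $\mathrm{Cl}_{\ofA}$-disjunct, and the $l$-fold $(\bigwedge)$ over $\vec{t}$ closes the derivation.

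The logical skeleton of both derivations is essentially forced; the real work — and the main obstacle — is the simultaneous verification, at every node, of the controlling hypotheses $\hyp{}{}(f;F;\alpha)$ and $\hyp{}{}(F)$ together with the stated ordinal and operator bounds. One has to track how $\rk(F)$ (which, by the analysis behind Lemma~\ref{l:PL}, lies in $\omega\cup\{\Omega+k\mid k<\omega\}$) propagates through the Lemma~\ref{l:GIL}-ordinal $(\rk(F)+\Omega+1)\cdot 2$ and the ensuing $(\bigvee)/(\bigwedge)$-steps, keeping the outcome below $\Omega\cdot 2+\omega$; check that $\kO$ of every occurring formula stays below $\EN(0)=\varepsilon_0$, which holds because all $\itP{\ofA}{\Omega}$-atoms contribute only the coefficient $0$ to $\kPi$, the value $\Omega$ appearing only under a single negation and hence being discarded in the passage from $\kPi$ to $\kPO$; and confirm that the norm inequalities of the form $N(\cdot)\le f[\cdot](0)$ survive the passage to the sub-operators $f[N(\iota)]$, $\EN[\matho(\iota)]$ forced by $(\bigwedge)$ and by the Tautology lemma. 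This is precisely the bookkeeping performed in Lemmas~\ref{l:taut}--\ref{l:GIL}, so it is routine, but it is where all the care must go.
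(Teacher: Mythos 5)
The proposal reproduces the paper's proof of Lemma \ref{l:ID} essentially verbatim: Property 1 is obtained from the Tautology lemma applied to $\ofA(\itP{\ofA}{\Omega},t)$, an application of the closure rule $(\Clrule)$, and a fold-up via $(\bigvee)$/$(\bigwedge)$, while Property 2 instantiates Lemma \ref{l:GIL} with $\xi=\Omega$ and $B(X)\equiv X(s)$ and then closes up with $(\bigvee)$/$(\bigwedge)$ over the quantifiers — exactly the paper's route. One small remark: where the paper simply asserts $\rk(F)<\omega$ in the proof of Property 2, you more carefully note that $\rk(F)$ may lie in $\{\Omega+k\mid k<\omega\}$ and flag the resulting ordinal bookkeeping as something to check rather than resolving it; this is a point worth verifying explicitly, but it does not change the fact that your argument follows the paper's.
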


\begin{proof}
{\sc Property} \ref{l:ID:1}.
Let $\alpha = \rk (\ofA (\itP{\ofA}{\Omega}, \ul{0})$ and
$t \in \mathcal{T} (\LID)$.
By the definition of $\rk$ we can find a natural $k < \omega$ such that
$\alpha = \rk (\ofA (\itP{\ofA}{\Omega}, t) =
 \Omega + k$.
This implies 
$\kPS (\ofA (\itP{\ofA}{\Omega}, t)) = \{ 0, \Omega \}$
and hence
$\kO (\ofA (\itP{\ofA}{\Omega}, t)) = \{ 0 \} < \EN (0)$.
By Tautology lemma (Lemma \ref{l:taut}),
\begin{equation*}
f[N(\alpha)], \EN \vdash^{\alpha \cdot 2}_0
\itP{\ofA}{\Omega} t,
\neg \ofA (\itP{\ofA}{\Omega}, t), \ofA (\itP{\ofA}{\Omega}, t).
\end{equation*}
Since 
$\Omega < \Omega \cdot 2 + k+1 =  \alpha \cdot 2 +1$,
we can apply the closure rule $(\Clrule)$ obtaining the sequent
\begin{equation*}
f[N(\alpha)], \EN \vdash^{\Omega \cdot 2 +k+1}_0
\neg \ofA (\itP{\ofA}{\Omega}, t), \itP{\ofA}{\Omega} t.
\end{equation*}
An application of $(\AND{})$ followed by an application of 
$(\OR{})$ enables us to conclude
\begin{equation*}
f[N(\alpha)+1], \EN \vdash^{\Omega \cdot 2 + \omega}_0
\forall x (\ofA (\itP{\ofA}{\Omega}, x) \rightarrow \itP{\ofA}{\Omega} x).
\end{equation*}

{\sc Property} \ref{l:ID:2}.
By definition
$\rk (\itP{\ofA}{\Omega}) = \omega \cdot \Omega = \Omega$,
On the other hand
$\rk (F) < \omega$ and hence
$(\rk (F) + \rk (\itP{\ofA}{\Omega}) +1) \cdot 2 = \Omega \cdot 2 +2$.
Let 
$s, \vec{t} = s, t_0, \dots t_{l-1} \in \mathcal{T} (\LID)$.
Then by the previous lemma (Lemma \ref{l:GIL})
\begin{equation*}
f[2], \EN \vdash^{\Omega \cdot 2 +1}_0 
\neg \forall x (\ofA (F(\cdot, \vec{t}), x) \rightarrow F(x, \vec{t})),
\neg \itP{\ofA}{\Omega} t, F(s, \vec{t})
\end{equation*}
since $N(\Omega +1) =2$.
It is not difficult to see that applications of $(\OR{})$, $(\AND{})$
 and $(\OR{})$ in this order yield the sequent
\begin{equation*}
f[3], \EN \vdash^{\Omega \cdot 2 +5}_0 
\forall x (\ofA (F(\cdot, \vec{t}), x) \rightarrow F(x, \vec{t}))
\rightarrow
\forall x (\itP{\ofA}{\Omega} x \rightarrow F(x, \vec{t}))
\end{equation*}
Finally, $l$-fold application of $(\AND{})$ allows us to conclude.
\qed
\end{proof}

Let us recall that $\suc$ denotes the numerical successor $m \mapsto m+1$.

\begin{theorem}
Let $A \equiv \forall \vec x \exists y B (\vec x, y)$ be a
 $\Pi^0_2$-sentence for a $\Delta^0_0$-formula $B(\vec x, y)$
such that $\mathsf{FV} (B (\vec x, y)) = \{ \vec{x}, y \}$.
If $\mathbf{ID}_1 \vdash A$, then we can an ordinal term 
$\alpha \in \OT \seg \Omega$ built up without the Veblen function symbol 
$\varphi$ such that for all 
$\vec m = m_0, \dots, m_{l-1} \in \mathbb N$ there exists 
$n \leq \suc^\alpha (m_0 + \cdots + m_{l-1})$ such that
$B (\vec m, n)$ is true in the standard model $\mathbb N$ of $\mathrm{PA}$.
\end{theorem}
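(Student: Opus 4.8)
The plan is to embed the given $\mathbf{ID}_1$-derivation of $A$ into the infinitary calculus $\IDomega$, eliminate all cuts in the usual three phases (predicative reduction of the cut rank to $\Omega+1$, impredicative collapsing below $\Omega$, and a final predicative elimination to cut rank $0$), then invert the outermost universal quantifiers at a numerical instance and read off the witness by the Witnessing Lemma. Fix once and for all an elementary function $f$ satisfying $(f.\ref{f:1})$ and $(f.\ref{f:2})$ with $f(m)\leq\suc^\omega(m)$ for all $m$ (for instance $f(m)=2m+2$ works). Since $\mathbf{ID}_1\vdash A$, the sentence $A$ is a first-order consequence of finitely many instances $B_1,\dots,B_N$ of the axioms of $\mathbf{ID}_1$, i.e. of the $\mathrm{PA}$-axioms in $\LID$ (induction included) together with instances of $(\mathsf{ID}1)$ and $(\mathsf{ID}2)$. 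Applying Lemma~\ref{l:PL} to the valid disjunction $\neg B_1\vee\cdots\vee\neg B_N\vee A$, Lemma~\ref{l:ind} to each induction instance, Lemma~\ref{l:ID} to each $(\mathsf{ID}1)$- and $(\mathsf{ID}2)$-instance, and finitely many cuts against the $B_i$, one obtains fixed naturals $k_0,k_1$ and a $\varphi$-free ordinal term $\alpha_0<\Omega^2$ (which may be taken of the shape $\Omega\cdot n$, since $\omega<\Omega$ and derivations are monotone in the height) with
\[
 f[k_1],\ \EN\ \vdash^{\alpha_0}_{\Omega+k_0+2}\ \{A\};
\]
the bound $\Omega+k_0+2$ on the cut rank reflects that the only cut formulas of rank $\geq\Omega$ are subformulas of the $(\mathsf{ID}1)$- and $(\mathsf{ID}2)$-instances, whose rank is $\Omega$ plus a finite number. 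Absorbing $f[k_1]$ into $f$, we may take $k_1=0$.

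Next, $k_0+1$ applications of Lemma~\ref{l:CE1} lower the cut rank to $\Omega+1$ at the cost of iterating the $\Omega$-exponential on the height, giving $f^{\gamma_1},\EN\vdash^{\alpha_1}_{\Omega+1}\{A\}$ for a $\varphi$-free $\alpha_1<\varepsilon_{\Omega+1}$ and $\gamma_1=\EN^{\alpha_1}(0)+1<\Omega$. Impredicative Cut-elimination (Lemma~\ref{lem:ICE}) then yields $g_1,\EN^{\alpha_1+1}\cvdash{\delta}\{A\}$ with $\delta=\EN^{\alpha_1}(0)<\Omega$ (Corollary~\ref{c:F^a<Omega}) and $g_1=(f^{\gamma_1})^{\delta+1}$. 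Since every cut now has rank below $\Omega$, finitely many applications of Predicative Cut-elimination (Lemma~\ref{lem:PCE}), peeling off the Cantor normal form of the largest remaining cut rank, remove all cuts:
\[
 g_2,\ \EN^{\alpha_1+1}\ \vdash^{\delta'}_0\ \{A\}
\]
for some $\delta'<\Omega$, where $g_2$ arises from $g_1$ by finitely many transformations $h\mapsto h^{\EN^{\Omega\cdot\alpha+\gamma+\beta}(0)+1}$ with $\alpha,\gamma,\beta<\Omega$. Contracting these iterations one after another by Lemma~\ref{lem:(f^a)^b} gives $g_2(m)\leq f^{\eta}(m)$ for a single exponent $\eta<\Omega$, and by passing at each step to $\varphi$-free upper bounds (the heights $\varphi\alpha\beta$ produced by Lemma~\ref{lem:PCE} are dominated by suitable $\EN$-iterates, and $\varphi$-free $\Omega$-exponential terms are cofinal below the ordinals in question) $\eta$ can be chosen $\varphi$-free.

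Now write $A\equiv\forall x_0\cdots\forall x_{l-1}\exists y\,B(\vec x,y)$. For a given tuple $\vec m=m_0,\dots,m_{l-1}$, $l$ successive applications of the Inversion Lemma (Lemma~\ref{lem:inversion}) at the numerals $\ul{m_0},\dots,\ul{m_{l-1}}$ — noting $N(\ul{m_i})=m_i$, $\matho(\ul{m_i})=0$ and $h[n][n']=h[n+n']$ — give
\[
 g_2[m_0+\cdots+m_{l-1}],\ \EN^{\alpha_1+1}\ \vdash^{\delta'}_0\ \exists y\,B(\ul{\vec m},y).
\]
As this derivation is cut-free, the Witnessing Lemma (Lemma~\ref{lem:witness}) produces an $n\leq g_2[m_0+\cdots+m_{l-1}](0)=g_2(m_0+\cdots+m_{l-1})$ with $B(\ul{\vec m},\ul n)$ true in $\mathbb N$. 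Finally, from $f\leq\suc^\omega$, the monotonicity of $h\mapsto h^{\eta}$, and Lemma~\ref{lem:(f^a)^b}, one gets $g_2(x)\leq f^{\eta}(x)\leq(\suc^\omega)^{\eta}(x)\leq\suc^{\alpha}(x)$ with $\alpha:=\suc^{\Omega\cdot\omega+\eta}(0)\in\OT\seg\Omega$, and $\alpha$ is $\varphi$-free because $\eta$ is. This $\alpha$ is the required ordinal term.

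The main obstacle is not the skeleton — which is the standard embedding/cut-elimination/inversion/witnessing pipeline — but the verification that the ordinals controlling the operator can be kept $\varphi$-free throughout: the cut-elimination lemmas freely introduce the Veblen function into the derivation heights, and one must argue that, since the witness bound is extracted solely from the controlling function, whose exponents are the $\EN$-tower ordinals $\EN^{\Omega\cdot\alpha+\gamma+\beta}(0)$, each such exponent admits a $\varphi$-free notation (the iteration of $\EN$ already outruns the Veblen function in the relevant range). This has to be combined with the routine but lengthy bookkeeping that the norm and $K_\Omega$ side-conditions of Definition~\ref{d:OCD} survive all the transformations as well as the $l$ inversions.
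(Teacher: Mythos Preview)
Your proposal is correct and follows essentially the same pipeline as the paper's proof: embedding via Lemmas~\ref{l:PL}, \ref{l:ind}, \ref{l:ID}, iterated rank reduction to $\Omega+1$ via Lemma~\ref{l:CE1}, Impredicative Cut-elimination (Lemma~\ref{lem:ICE}), then Predicative Cut-elimination (Lemma~\ref{lem:PCE}) to rank $0$, Inversion, and Witnessing, with Lemma~\ref{lem:(f^a)^b} used to collapse the nested exponents into a single $\varphi$-free $\alpha$. Two small remarks: the paper takes $f=\suc^\omega$ directly rather than an auxiliary $f\leq\suc^\omega$, and since the rank $\delta=\EN^{\alpha_1}(0)$ after collapsing is already an epsilon number, a \emph{single} application of Lemma~\ref{lem:PCE} (with $\rho=0$, $\alpha=\beta=\delta$) suffices rather than ``finitely many''; also, in your final line you presumably mean $\alpha=\Suc^{\Omega\cdot\omega+\eta}(0)$ (the ordinal-function symbol $\Suc\in\opF$), not the numerical $\suc$.
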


\begin{proof}
Assume $\mathbf{ID}_1 \vdash A$.
Then there exist $\mathbf{ID}_1$-axioms $A_1, \dots, A_{k}$ such that
$(\neg A_1) \vee \cdots (\neg A_{k}) \vee A$
is a logical consequence in the first order predicate logic with
 equality.
Hence by Lemma \ref{l:PL},
\begin{equation*}
f[c_0], \EN \vdash^{\Omega \cdot 3}_0 
\neg A_1, \dots, \neg A_{k}, A
\end{equation*}
for some constant $c_0 < \omega$ depending on
$N(\rk (A_1)), \dots, N(\rk (A_k))$, $N(\rk (A))$ 
and depending also on the tree height of a cut-free $\mathbf{LK}$-derivation
 of the sequent
$\neg A_1, \dots, \neg A_{l}, A$.
By Lemma \ref{l:ind} and \ref{l:ID}, for each $j=1, \dots, k$,
there exists a constant $c_j$ depending on $\rk (A_j)$ such that
$f[c_j], \EN \vdash^{\Omega \cdot 2 + \omega}_0 A_j$.
Hence $k$-fold application of $(\mathsf{Cut})$ yields
$f[c], \EN \vdash^{\Omega \cdot 3}_{\Omega + d +1} A$,
where
$c := \max (\{ k \} \cup \{ c_j \mid j \leq k \} \cup 
            \{ \lh (A_j) \mid 1 \leq j \leq k \}
           )$ 
and
$d := \max (\{ \Omega, \rk (A_1), \dots, \rk (A_k) \})$.

For each $n \in \mathbb N$ and $\alpha \in \OT$ let us define ordinal 
$\Omega_n (\alpha)$ and $\gamma_n$ by
\[
 \begin{array}{rclrcl}
 \Omega_0 (\alpha) &=& \alpha, &
 \gamma_0 &=& \Omega \cdot 3, \\
 \Omega_{n+1} (\alpha) &=& \Omega^{\Omega_n (\alpha)}, \quad &
 \gamma_{n+1} &=& \EN^{\gamma_n} (0) +1.
 \end{array}
\]

Then $d$-fold iteration of Cut-reduction lemma (Lemma \ref{lem:cut-red})
 yields the sequent
$f[c]^{\gamma_d}, \EN \vdash^{\Omega_d (\Omega \cdot 3)}_{\Omega +1} A$.
Hence Impredicative cut-elimination lemma (Lemma \ref{lem:ICE}) yields 
\[
 (f[c]^{\gamma_d})^{\EN^{\Omega_d (\Omega \cdot 3)} (0)},
 \EN^{\Omega_d (\Omega \cdot 3) +1} 
 \cvdash{\EN^{\Omega_d (\Omega \cdot 3)} (0)} A.
\]
Let 
$F := \EN^{\Omega_d (\Omega \cdot 3) +1}$ 
and 
$\beta := \EN^{\Omega_d (\Omega \cdot 3)} (0)$.
Then
$(f[c]^{\gamma_d})^\beta, F
 \vdash^{\beta}_{\omega^\beta} A$ holds.
It is not difficult to check that 
$\beta < \Omega$, $N (\beta) \leq (f[c]^{\gamma_d})^\beta$ and 
$K_{\Omega} \beta < F(0)$.
Hence Predicative cut-elimination lemma (Lemma \ref{lem:PCE}) yields the sequent
\[
 (f[c]^{\gamma_d})^{F^{\Omega \cdot \beta + \beta \cdot 2} (0) +1}
 F \vdash^{\varphi \beta \beta}_0 A.
\]
Now let $f$ denote $\suc^\omega$.
By Example \ref{ex:1}.\ref{ex:1:4} one can check that the conditions 
($\suc^\omega$.\ref{f:1}) and ($\suc^\omega$.\ref{f:2}) hold.
From Example \ref{ex:1} one will also see that
$\suc^\omega [c] (m) \leq \suc^\omega (\suc^c (m)) \leq 
 \suc^{\omega + c +1} (m)$
for all $m$.
By these we have the inequality
\[
 (\suc [c]^{\gamma_d})^{F^{\Omega \cdot \beta + \beta \cdot 2} (0) +1} (0) 
 \leq
 ((\suc^{\omega +c+1})^{\gamma_d})^{F^{\Omega \cdot \beta + \beta \cdot 2} (0) +1} (0).
\]
Thanks to Lemma \ref{lem:(f^a)^b}  
we can find an ordinal $\alpha \in \OT \seg \Omega$ built up without
the Veblen function symbol $\varphi$ such that 
\[
 ((\suc^{\omega + c+1})^{\gamma_d})^{F^{\Omega \cdot \beta + \beta \cdot 2} (0) +1} (0)
 \leq \suc^\alpha (0).
\]
This together with ($l$-fold application of) Inversion lemma (Lemma
 \ref{lem:inversion}) yields the sequent
\[
 \suc^\alpha [m_0] \cdots [m_{l-1}], F 
 \vdash^{\varphi \beta \beta}_0 
 \exists y B (\ul{\vec m}, y),
\]
where $\vec m = m_0, \dots, m_{l-1}$.
By Witnessing lemma (Lemma \ref{lem:witness}) 
we can find a natural 
$n \leq 
 \suc^\alpha [m_0] \cdots [m_{l-1}] (0) =
 \suc^\alpha (m_0 + \cdots + m_{l-1})$ 
such that
$B (\vec m, n)$ is true in the standard model $\mathbb N$ of $\mathrm{PA}$.
\qed
\end{proof}

We say a function $f$ is {\em elementary} (in another function $g$) if
$f$ is definable explicitly from the successor $\suc$, projection, zero
$0$, addition $+$, multiplication $\cdot$, cut-off subtraction $\minus$
(and $g$), using composition, bounded sums and bounded products,
c.f. Rose \cite[page 3]{Rose}.

\begin{corollary}
\label{c:main:onlyif}
Every function provably computable in $\mathbf{ID}_1$ is elementary in
$\{ \suc^\alpha \mid \alpha \in \OT \seg \Omega \}$.
\end{corollary}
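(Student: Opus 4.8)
The plan is to derive Corollary~\ref{c:main:onlyif} as an essentially immediate consequence of the preceding Theorem, which already exhibits, for every provably computable function, an ordinal term $\alpha \in \OT \seg \Omega$ (built without the Veblen symbol $\varphi$) bounding the witnesses. First I would unwind the definitions: if $f : \mathbb{N}^l \rightarrow \mathbb{N}$ is provably computable in $\mathbf{ID}_1$, there is a $\Sigma^0_1$-formula $A_f(\vec x, y)$ with $\mathbf{ID}_1 \vdash \forall \vec x \exists ! y\, A_f(\vec x, y)$ and $A_f(\ul{\vec m}, \ul n)$ true iff $f(\vec m) = n$. Writing $A_f(\vec x, y) \equiv \exists z\, B(\vec x, y, z)$ for a $\Delta^0_0$-formula $B$, the sentence $\forall \vec x \exists y \exists z\, B(\vec x, y, z)$ is a $\Pi^0_2$-theorem of $\mathbf{ID}_1$ (after contracting the two existential quantifiers into one by a standard pairing $\langle\cdot,\cdot\rangle$, or simply treating $\exists y\exists z$ as the outer existential block). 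Applying the Theorem to this $\Pi^0_2$-sentence yields an ordinal term $\alpha \in \OT \seg \Omega$ without $\varphi$ such that for all $\vec m$ there is $w \leq \suc^\alpha(m_0 + \cdots + m_{l-1})$ with $B$ witnessed by the components of $w$; in particular $f(\vec m)$ itself (being the unique first component) satisfies $f(\vec m) \leq \suc^\alpha(m_0 + \cdots + m_{l-1})$.

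Next I would observe that once we have the bound $f(\vec m) \leq \suc^\alpha(m_0 + \cdots + m_{l-1})$, we can recover $f$ elementarily in $\suc^\alpha$: the formula $B$ is $\Delta^0_0$, hence its characteristic function is Kalmar elementary, and $f(\vec m)$ is obtained by a bounded search $f(\vec m) = \mu\, y \leq \suc^\alpha(m_0 + \cdots + m_{l-1})$ of the least $y$ for which $\exists z \leq \suc^\alpha(\cdots)\, B(\vec m, y, z)$ holds (using that the witness $w$ bounds both components, so both the $y$ and $z$ searches stay under $\suc^\alpha(\cdots)$). Bounded minimisation and bounded quantification over a $\Delta^0_0$-matrix are elementary operations, and $m_0 + \cdots + m_{l-1}$ is elementary, so $f$ is explicitly definable from $\suc$, $+$, $\cdot$, $\minus$, projections, $0$ and $\suc^\alpha$ using composition, bounded sums and bounded products. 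Since $\alpha \in \OT \seg \Omega$, this places $f$ in the class of functions elementary in $\{\suc^\beta \mid \beta \in \OT \seg \Omega\}$, which is the claim.

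I would also note two small points of care. First, the Theorem is stated for a single $\Pi^0_2$-sentence with one existential witness; to handle $\exists y \exists z\, B$ one either pairs $y$ and $z$ (the pairing and unpairing functions are elementary, so this does not affect the conclusion) or, more directly, inspects the embedding proof and the Witnessing lemma (Lemma~\ref{lem:witness}), which already handles sequents $\exists x_0 B_0, \ldots, \exists x_{l-1} B_{l-1}$ and bounds every witness $m_j$ by $f(0)$ in the terminal operator — so a block of existentials is covered verbatim. Second, $\suc^\alpha$ is not recursive in general, but that is irrelevant here: "elementary in $g$" is a purely closure-theoretic notion and Rose's definition (cited in the paper) permits an arbitrary oracle $g$.

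The main obstacle, such as it is, is entirely bookkeeping rather than mathematics: one must make sure the witness bound produced by the Theorem simultaneously bounds \emph{all} the existentially quantified variables collapsed into the single outer block (so that the inner bounded search over $z$ is legitimate), and one must confirm that the $\Delta^0_0$-matrix $B$ contributes only an elementary characteristic function — both of which follow from the fact that $\Delta^0_0$-predicates are elementary and that the Witnessing lemma bounds every component of the witness tuple. No genuinely new argument is required beyond invoking the Theorem and the elementary closure of bounded search.
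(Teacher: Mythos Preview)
Your proposal is correct and is exactly the standard unwinding the paper leaves implicit: the paper states the corollary without proof, treating it as immediate from the preceding Theorem, and your bounded-search argument from the $\suc^\alpha$ witness bound is the canonical way to fill in that step. Your care about contracting $\exists y\exists z$ via an elementary pairing is appropriate, since the Theorem is literally stated for a single existential quantifier.
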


\section{A recursive ordinal notation system $\OTO$}

In order to obtain a precise characterisation of the provably computable
functions of $\mathbf{ID}_1$, we introduce a {\em recursive} ordinal notation system
$\langle \OTO, < \rangle$.
Essentially $\OTO$ is a subsystem of $\OT$.

\begin{definition}
We define three sets 
$\SC \subseteq \AI \subseteq \OTO$ of ordinal terms simultaneously. 
Let $0$, $\Omega$, $\Suc$, and $+$ be distinct symbols.
\begin{enumerate}
\item $0 \in \OTO$ and $\Omega \in \SC$.
%\item $\{ \Suc, \EN \} \subseteq \opF$.
\item If $\alpha \in \OT \seg \Omega$, then 
      $\Suc (\alpha) \in \OTO$. %and 
%      $\EN (\alpha) \in \AI$.
\item If $\{ \alpha_1, \dots, \alpha_l \} \subseteq \AI$ and
      $\alpha_1 \geq \cdots \geq \alpha_l$, then 
      $\alpha_1 + \cdots + \alpha_l \in \OTO$.
\item If $\alpha \in \OTO$, then 
      $\omega^\alpha \in \AI$.
\item If $\alpha \in \OTO$ and $\xi \in \OTO \seg \Omega$, then
      $\Omega^\alpha \cdot \xi \in \AI$.
\item If $\alpha \in \OTO$ and $\xi \in \OTO \seg \Omega$,
      then $\Suc^\alpha (\xi) \in \SC$.
%\item If $F \in \opF$ and $\alpha \in \OT$, then
%      $F^\alpha \in \opF$.
\end{enumerate}
\end{definition}

The relation $<$ on $\OTO$ is defined in the obvious way.
One will see that $\OTO$ is indeed a recursive ordinal notation system.
Let us define the norm $N(\omega^\alpha)$ of $\omega^\alpha$ in the most
natural way, i.e.,
$N(\omega^\alpha) = N(\alpha) +1$.

\begin{lemma}
Let $\alpha$ denote an ordinal term built up in $\OT$ without the
 Veblen function symbol $\varphi$.
Then there exists an ordinal term $\alpha' \in \OTO$ such that
$\alpha \leq \alpha'$
and
$N(\alpha) \leq N(\alpha')$.
\end{lemma}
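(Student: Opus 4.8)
The plan is to prove the lemma by induction over the term-construction of $\alpha$ in $\OT$, under the standing hypothesis that $\alpha$ contains no occurrence of the Veblen function symbol $\varphi$. The key observation is that the only essential difference between $\OT$ and $\OTO$ is that $\OT$ has the machinery of general operator symbols $F \in \opF$ and the Veblen function, whereas $\OTO$ only keeps $\Suc$-towers $\Suc^\alpha(\xi)$ and the $\omega$-exponential $\omega^\alpha = \varphi 0 \alpha$ (plus $0$, $\Omega$, $+$, and $\Omega^\alpha\cdot\xi$). So for each clause in the definition of $\OT$ I must exhibit a matching $\OTO$-term of at least the same value and at least the same norm.

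First I would handle the base cases: $0' := 0$ and $\Omega' := \Omega$, where both value and norm are preserved exactly. For the successor clause, if $\alpha = \Suc(\beta)$ with $\beta < \Omega$, then by IH there is $\beta' \in \OTO \seg \Omega$ with $\beta \leq \beta'$ and $N(\beta) \leq N(\beta')$, and I take $\alpha' := \Suc(\beta')$; monotonicity of $[\Suc]$ gives $[\alpha] \leq [\alpha']$ and $N(\alpha) = N(\beta)+1 \leq N(\beta')+1 = N(\alpha')$. The sum clause is handled componentwise: if $\alpha = \alpha_1 + \cdots + \alpha_l$ with each $\alpha_i \in \AI$, apply IH to each $\alpha_i$ to get $\alpha_i' \in \OTO$, re-sort into weakly decreasing order, and set $\alpha' := \alpha_1' + \cdots + \alpha_l'$ (using that $N$ is additive and re-sorting does not change the multiset of summands, hence not the norm); here one needs the subsidiary remark that the $\alpha_i'$ can be taken additively indecomposable, or one absorbs any extra additive decomposition into the sum. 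The clause $\Omega^\alpha \cdot \xi$ is similar, applying IH to $\alpha$ and $\xi$ and forming $\Omega^{\alpha'} \cdot \xi'$, with $N$ preserved by the norm clause $N(\Omega^\alpha\cdot\xi)=N(\alpha)+N(\xi)+1$.

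The interesting cases are those involving $\opF$-symbols. The only function symbols are built from $\Suc$ and $\EN$ via clause (8), so a general $F \in \opF$ has the form $G^{\delta_k}(\cdots)$ — but actually the cleanest route is: for a term of the form $F^\alpha(\xi) \in \SC$ or $\EN(\alpha) \in \AI$, I would argue by a secondary induction on the build-up of $F$ itself. For $\EN(\alpha)$ with $\alpha < \Omega$: since $[\EN](\beta) = \min\{\xi : \omega^\xi = \xi,\ \beta < \xi\} \leq \omega^{\omega^{\cdots}}$, I can bound $[\EN(\alpha)]$ by a suitable $\omega$-tower $\omega^{\omega^{\cdots^{\alpha'+1}}}$ of fixed finite height (say, by $\omega^{\gamma}$ where $\gamma$ iterates $\omega$-exponentiation enough times over $\alpha'$), which lives in $\OTO$ since clause (4) of $\OTO$ permits $\omega^\alpha$; one checks the norm grows by a bounded additive constant per level, so $N(\EN(\alpha)) = N(\alpha)+1$ is dominated. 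For $\Suc^\alpha(\xi)$, this is directly a clause of $\OTO$ (clause 6), so apply IH to $\alpha$ and $\xi$. For a general $F^\alpha(\xi)$ where $F$ is a compound like $\EN^{\delta}$ or $\Suc^{\delta}$, I would use Lemma \ref{lem:F^a^b} (giving $(F^{\alpha})^{\beta} \leq F^{\alpha+\beta}$) to collapse iterated exponents into a single exponent over a base operator that is either $\Suc$ or $\EN$, reducing to the two cases above, and track the norm via the clause $N(F^\alpha(\xi)) = N(F(\xi)) + N(\alpha)$.

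The main obstacle I anticipate is the $\EN$ case: unlike $\Suc$, the value $[\EN]$ is defined by a fixed-point/minimization, and $\OTO$ has no primitive notation for epsilon numbers, so one must \emph{over-approximate} $\EN(\alpha)$ by an explicit finite $\omega$-tower and verify both that this tower really dominates the least relevant epsilon number \emph{and} that its norm stays linearly bounded in $N(\alpha)$; getting a tower of genuinely bounded (constant) height, rather than one whose height depends on $\alpha$, is the delicate point, since a height depending on $\alpha$ would blow up the norm. A safe choice is to note that for $\alpha < \Omega$ one in fact only needs $\EN(\alpha) \leq \EN(0) \cdot$(something), or better, to exploit that in all the places the theorem's proof actually uses such terms the argument of $\EN$ is a $\Suc$-term, so a single $\omega$-exponential suffices; making this uniform across \emph{all} $\varphi$-free $\OT$-terms is where the care is needed, and I would likely strengthen the induction hypothesis to also bound the number of $\omega$-exponential levels needed in $\alpha'$ by a function of the term-length of $\alpha$, which keeps the norm increase under control.
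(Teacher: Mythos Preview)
Your inductive skeleton and your use of Lemma~\ref{lem:F^a^b} to collapse compound operators $F = G^{\delta}$ into a single exponent over a base operator are exactly right and match the paper. The genuine gap is in your treatment of $\EN$. You propose to dominate $\EN(\alpha)$ by a finite $\omega$-tower $\omega^{\omega^{\cdots^{\alpha'+1}}}$, but this cannot work: $[\EN](0) = \varepsilon_0$ is by definition the supremum of all such finite towers built over ordinals below $\varepsilon_0$, so no finite tower of that shape can reach it, regardless of height. Your proposed strengthening of the induction hypothesis (bounding tower height by term-length) does not repair this, since for the single term $\EN(0)$ you still need a fixed $\OTO$-term whose value is at least $\varepsilon_0$, and no $\omega$-tower over $0$ or $1$ achieves that.

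The fix, and the paper's key observation, is that you already have what you need in clause~(6) of $\OTO$: for every $\alpha < \Omega$ one has $\EN(\alpha) \leq \Suc^{1}(\alpha)$. Indeed, unwinding the definition of $F^{1}(\xi)$ with $F = \Suc$ shows that $\Suc^{1}(\alpha)$ is the least epsilon number $\gamma$ with $\{1,\alpha\} < \gamma$ and closed under $\eta \mapsto \eta+1$ below $\gamma$; the closure condition is automatic for epsilon numbers, so $\Suc^{1}(\alpha)$ is just the least epsilon number above $\alpha$, i.e.\ equals $\EN(\alpha)$. On norms, $N(\EN(\alpha)) = N(\alpha)+1 < N(\Suc(\alpha)) + N(1) = N(\Suc^{1}(\alpha))$. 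So instead of trying to approximate $\EN$ by $\omega$-exponentials, you should route it through the $\Suc^{\alpha}(\xi)$ constructor of $\OTO$, and then your induction (together with Lemma~\ref{lem:F^a^b} for nested operators like $\EN^{\delta}$) goes through cleanly.
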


\begin{proof}
By induction over the term construction of $\alpha \in \OT$.
In the base case let us observe that
$\EN (\alpha) \leq \Suc^1 (\alpha)$
for all $\alpha < \Omega$ and that
$N(\EN (\alpha)) = N(\alpha) +1 < N(\Suc (\alpha)) + 1 =
 N(\Suc^1 (\alpha))$.
In the induction case we employ Lemma \ref{lem:F^a^b}.
\qed
\end{proof}
 
\begin{lemma}
\label{l:OTO}
For any ordinal term $\alpha \in \OT$ built up without the Veblen
 function symbol $\varphi$ there exists an ordinal term
$\alpha' \in \OTO$ such that
$\suc^\alpha (m) \leq \suc^{\alpha'} (m)$
for all $m$.
\end{lemma}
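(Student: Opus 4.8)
The plan is to reduce the claim to the lemma just proved together with a monotonicity property of the operation $\beta \mapsto \suc^{\beta}$ that is already essentially recorded in the Corollary following the definition of the iterated functions $f^{\beta}$ (the one asserting that $f^{\beta}(m) < f^{\gamma}(m)$ whenever $\beta < \gamma$ and $N(\beta) \le f[N(\gamma)](m)$).

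First I would apply the preceding lemma: since $\alpha$ is an ordinal term of $\OT$ built without the Veblen symbol $\varphi$, there is $\alpha' \in \OTO$ with $\alpha \le \alpha'$ and $N(\alpha) \le N(\alpha')$. Since $\OTO$ is a subsystem of $\OT$ — the canonical embedding fixes $0$, $\Omega$, the $\Suc$-terms, the sums and the collapsing terms $\Suc^{\gamma}(\xi)$, sends $\omega^{\gamma}$ to $\varphi 0 \gamma$, and preserves both $<$ and the norm $N$ — we may regard $\alpha'$ as an element of $\OT$, so that $\suc^{\alpha'}$ is defined by the recursion for $f^{(\cdot)}$. Then I would argue: if $\alpha = \alpha'$ the conclusion is immediate; if $\alpha < \alpha'$, fix $m$ and note that $N(\alpha) \le N(\alpha') \le N(\alpha') + m + 1 = \suc(N(\alpha') + m) = \suc[N(\alpha')](m)$, so part~2 of the Corollary mentioned above, applied with $f := \suc$, $\beta := \alpha$ and $\gamma := \alpha'$, gives $\suc^{\alpha}(m) < \suc^{\alpha'}(m)$. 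As $m$ was arbitrary, this proves the lemma.

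If one prefers not to invoke the Corollary, the same estimate can be read straight off the definition of $f^{(\cdot)}$: for $0 < \alpha \le \alpha'$ with $N(\alpha) \le N(\alpha')$, the index set $\{\gamma \mid \gamma < \alpha \text{ and } N(\gamma) \le \suc[N(\alpha)](m)\}$ of the maximum defining $\suc^{\alpha}(m)$ is contained in the corresponding index set for $\suc^{\alpha'}(m)$ and is nonempty (it contains $0$), so the two maxima compare; the case $\alpha = 0 < \alpha'$ is handled by taking $\gamma = 0$ in the maximum for $\suc^{\alpha'}(m)$. I do not expect a genuine obstacle here: the substantive work — eliminating the $\EN$-iterates and the nested function iterations in favour of $\Suc$-iterates via $\EN(\gamma) \le \Suc^{1}(\gamma)$ and Lemma~\ref{lem:F^a^b} — has already been carried out in the preceding lemma, and the only point that requires attention is that it is the norm inequality $N(\alpha) \le N(\alpha')$, and not merely $\alpha \le \alpha'$, that guarantees the inclusion of index sets (equivalently, that makes the hypothesis of the Corollary hold for every $m$).
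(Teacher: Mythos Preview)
Your argument is correct and is exactly what the paper intends: Lemma~\ref{l:OTO} is stated without proof immediately after the preceding lemma, and your derivation---take the $\alpha'\in\OTO$ with $\alpha\le\alpha'$ and $N(\alpha)\le N(\alpha')$ supplied there, then invoke part~2 of the Corollary following the definition of $f^{\alpha}$ (or, equivalently, the index-set inclusion you spell out)---is precisely the intended one. Your remark that it is the norm inequality, not merely $\alpha\le\alpha'$, that makes the monotonicity go through is the only point worth noting, and you note it.
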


\begin{corollary}
\label{c:main}
A function is provably computable in $\mathbf{ID}_1$ if and only if it is
 elementary in 
$\{ \suc^\alpha \mid \alpha \in \OTO \seg \Omega \}$.
\end{corollary}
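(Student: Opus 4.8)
The plan is to obtain Corollary~\ref{c:main} by combining the upper bound already established with one classical input on the strength of $\mathbf{ID}_1$. For the ``only if'' direction, let $f\colon\mathbb N^l\to\mathbb N$ be provably computable in $\mathbf{ID}_1$, say via the $\Sigma^0_1$-formula $A_f(\vec x,y)\equiv\exists z\,C(\vec x,y,z)$ with $C$ a $\Delta^0_0$-formula. Then $\mathbf{ID}_1\vdash\forall\vec x\,\exists w\,C(\vec x,(w)_0,(w)_1)$, which is a $\Pi^0_2$-sentence with $\Delta^0_0$-matrix, so the Theorem supplies an ordinal term $\alpha\in\OT\seg\Omega$, built without the Veblen symbol $\varphi$, such that for every $\vec m=m_0,\dots,m_{l-1}$ the value $f(\vec m)$ and a corresponding $z$ are both coded below $\suc^\alpha(m_0+\dots+m_{l-1})$; in particular $f(\vec m)\le\suc^\alpha(m_0+\dots+m_{l-1})$. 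By Lemma~\ref{l:OTO} there is $\alpha'\in\OTO\seg\Omega$ with $\suc^\alpha(m)\le\suc^{\alpha'}(m)$ for all $m$, whence
\[
 f(\vec m)=\mu y\le\suc^{\alpha'}(m_0+\dots+m_{l-1}).\ \exists z\le\suc^{\alpha'}(m_0+\dots+m_{l-1}).\ C(\vec m,y,z).
\]
Since $C$ is $\Delta^0_0$ and addition is elementary, the right-hand side is built from $\suc^{\alpha'}$ by composition and bounded quantification, so $f$ is elementary in $\{\suc^{\alpha'}\mid\alpha'\in\OTO\seg\Omega\}$. (This is Corollary~\ref{c:main:onlyif} with $\OT$ traded for $\OTO$.)

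For the ``if'' direction it suffices to show that each $\suc^\alpha$ with $\alpha\in\OTO\seg\Omega$ is provably computable in $\mathbf{ID}_1$: since $\mathbf{ID}_1$ extends $\mathrm{I \Sigma}_1$, its class of provably computable functions is closed under explicit definition, composition, bounded sums and bounded products, so every function elementary in $\{\suc^\alpha\mid\alpha\in\OTO\seg\Omega\}$ is then provably computable in $\mathbf{ID}_1$. The one substantial ingredient is the classical fact that $\mathbf{ID}_1$ proves transfinite induction along $\langle\OTO,<\rangle$ up to $\alpha$, for every $\alpha\in\OTO\seg\Omega$ and every arithmetical formula; this is the lower-bound half of the identification of the proof-theoretic ordinal of $\mathbf{ID}_1$ with the Bachmann--Howard ordinal, and it is precisely here that the least-fixed-point predicates $P_{\mathcal A}$, hence the impredicativity of $\mathbf{ID}_1$, are used, accessibility being itself captured by an inductive definition. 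Note that $\sup(\OTO\seg\Omega)$ is the Bachmann--Howard ordinal, so each single $\alpha\in\OTO\seg\Omega$ lies strictly below it. Granting this, one argues in $\mathbf{ID}_1$ by transfinite induction on $\beta\le\alpha$ applied to the $\Pi^0_2$-formula ``$\forall m\,\exists !\,n\,A_{\suc^\beta}(m,n)$'', where $A_{\suc^\beta}$ is the natural $\Sigma^0_1$-graph read off the defining recursion: $\suc^0=\suc$ is provably total, and for $\beta>0$ the number $\suc^\beta(m)$ is the maximum of the \emph{finitely many} values $\suc^{\beta'}(\suc^{\beta'}(m))$ with $\beta'<\beta$ and $N(\beta')\le\suc[N(\beta)](m)$, each of which is total by the induction hypothesis because $<$ and $N$ are primitive recursive on the recursive system $\OTO$. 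Thus $\mathbf{ID}_1\vdash\forall m\,\exists !\,n\,A_{\suc^\alpha}(m,n)$, and $A_{\suc^\alpha}$ defines $\suc^\alpha$ on $\mathbb N$ by construction, so $\suc^\alpha$ is provably computable in $\mathbf{ID}_1$.

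The main obstacle is exactly the transfinite-induction fact invoked above: that $\mathbf{ID}_1$ proves $\mathrm{TI}(\OTO\seg\alpha)$ for each $\alpha\in\OTO\seg\Omega$. We would import this from the standard proof-theoretic analysis of $\mathbf{ID}_1$ rather than reprove it; what remains is then only the routine (but necessary) verification that, provably in $\mathbf{ID}_1$, the segment $\OTO\seg\Omega$ of our recursive notation system is order-isomorphic to the segment below the Bachmann--Howard ordinal of whichever standard notation system that analysis uses, so that the transfinite induction transfers to $\OTO$, together with the elementary closure properties of the class of provably computable functions cited above.
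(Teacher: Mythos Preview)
Your proposal is correct and follows essentially the same route as the paper: the ``only if'' direction combines the Theorem with Lemma~\ref{l:OTO} (which is exactly how the paper derives Corollary~\ref{c:main:onlyif} and then passes to $\OTO$), and the ``if'' direction invokes the provable well-ordering of each initial segment $\OTO\seg\alpha$ in $\mathbf{ID}_1$---the paper likewise defers this to Pohlers---to conclude that each $\suc^\alpha$ is provably computable. Your write-up is more explicit than the paper's, spelling out the bounded-search presentation of $f$ and the transfinite-induction argument for the totality of $\suc^\alpha$, but the decomposition and the single external input are identical.
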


The ``only if'' direction follows from Corollary \ref{c:main:onlyif} and
Lemma \ref{l:OTO}.
The ``if'' direction can be seen as follows.
One can show that for each $\alpha \in \OTO \seg \Omega$ the system
$\mathbf{ID}_1$ proves that the initial segment 
$\langle \OTO \seg \alpha, < \rangle$ of $\langle \OTO, < \rangle$ is a
well-ordering.
For the full proof, we kindly refer the readers to, e.g., Pohlers \cite[\S 29]{Poh98}. 
From this one can show that for each $\alpha \in \OTO \seg \Omega$
the function $\suc^\alpha$ is provably computable in $\mathbf{ID}_1$,
and hence the assertion.

\section{Conclusion}

In this technical report we introduce a new approach to provably computable
functions, providing a simplified characterisation of those of the system
$\mathbf{ID}_1$ of non-iterated inductive definitions.
The simplification is made possible due to the method of
operator-controlled derivations that was originally introduced by
Wilfried Buchholz \cite{Buch92}.
An new idea in this report is to combine the ordinal operators from
\cite{Buch92} with the number-theoretic operators from %Weiermann
\cite{weier06}, c.f. Definition \ref{d:OCD}.
Ordinal operators contain information much enough to analyse
$\Pi^1_1$-consequences of the controlled derivations.
In contrast, number-theoretic operators contain information  much enough
to analyse those $\Pi^0_2$-consequences.
It is not difficult to generalise this approach to the system $\mathbf{ID}_n$ of
$n$-fold iterated inductive definitions.
Then it is natural to ask whether this approach can be extended to
stronger systems like fragments of Kripke-Platek set theories.
Extension to strong fragments, e.g., the fragment $\mathrm{KPM}$ for
recursively Mahlo universes or the fragment
$\mathrm{KP \Pi_3}$ for $\Pi_3$-reflecting universes, is still a challenge. 

%\bibliographystyle{abbrv}
%\bibliography{eguchi}

%\input{draft100317.bbl}

\end{document}